\RequirePackage{fix-cm}
\RequirePackage{silence}
\documentclass[sn-mathphys,Numbered]{sn-jnl}
\usepackage{geometry}
\usepackage{comment}
\usepackage{leftidx}
\usepackage{graphicx}
\usepackage{multirow}
\usepackage{amsmath,amssymb,amsfonts}
\usepackage{amsthm}
\usepackage{mathtools}
\usepackage[title]{appendix}
\usepackage{xcolor}
\usepackage{textcomp}
\usepackage{booktabs}
\usepackage{algorithm}
\usepackage{algorithmicx}
\usepackage{algpseudocode}
\usepackage{listings}
\usepackage{enumerate}
\usepackage{ulem}
\usepackage{manyfoot}

\newtheorem{theorem}{Theorem}
\newtheorem{proposition}{Proposition}
\newtheorem{corollary}{Corollary}
\newtheorem{remark}{Remark}
\newtheorem{lemma}{Lemma}
\newtheorem{example}{Example}

\newtheorem{definition}{Definition}

\makeatletter
\def\thm@space@setup{%
  \thm@preskip=12pt plus 4pt minus 2pt  
  \thm@postskip=12pt plus 4pt minus 2pt 
}
\makeatother

\makeatletter
\renewcommand\paragraph{\@startsection{paragraph}{4}{\z@}%
  {3.25ex \@plus1ex \@minus.2ex}%
  {-1em}%
  {\normalfont\normalsize}}
\makeatother

\usepackage{hyperref}
\hypersetup{
  colorlinks=true,
  linkcolor=blue,
  filecolor=red,
  urlcolor=blue,
  linktoc=page,
 citecolor=magenta,
hypertexnames=false
}



\usepackage{zref-clever}
\let\oldnewtheorem\newtheorem

\RenewDocumentCommand{\newtheorem}{momo}{
  \IfValueTF{#2}{
    \AddToHook{env/#1/begin}{
      \zcsetup{countertype={#2=#1}}}
      \zcRefTypeSetup{#1}{
Name-sg = #3 ,
      }
    \oldnewtheorem{#1}[#2]{#3}
  }{
    \AddToHook{env/#1/begin}{
      \zcsetup{countertype={#1=#1}}}
    \zcRefTypeSetup{#1}{
Name-sg = #3 ,
      }
    \IfValueTF{#4}{
      \oldnewtheorem{#1}{#3}[#4]
    }{
      \oldnewtheorem{#1}{#3}
    }
  }
}

\newcommand{\cref}[1]{\zcref{#1}}
\newcommand{\Cref}[1]{\zcref[S]{#1}} 

\begin{document}

\title[]{\texorpdfstring{$C^0$}{C0}-inextendibility of a class of warped-product black hole spacetimes}

\author{Karim Mosani} 
\affil{Fakultät für Mathematik, Universität Wien, Oskar-Morgenstern-Platz 1, 1090 Wien, Austria}

\email{karim.mosani@univie.ac.at}


\abstract{
We adapt Sbierski’s proof of $C^{0}$-inextendibility of the maximal analytic Schwarzschild spacetime to a broad class of warped-product black hole spacetimes with a static exterior region. These spacetimes are globally hyperbolic, have a codimension-two Riemannian fibre and a radial coordinate $(r)$, which serves as the warping function of the fibre. They admit a spacetime singularity as $r\to0$, characterised by the divergence of the Kretschmann scalar. This class encompasses nonvacuum black hole models and geometries beyond spherical symmetry. Under suitable assumptions, including that the fibre is closed (compact without boundary), connected, homogeneous, and orientable, we establish future $C^{0}$-inextendibility for spacetimes in this class. The result further extends to spacetimes possessing more than one regular black hole horizon.\\

\textit{Keywords}: Lorentzian geometry, nonsmooth spacetime geometry, low regularity, spacetime singularities, cosmic censorship conjecture.\\

\textit{2020 Mathematics Subject Classification}: 53C50, 	83C75, 83C57
}

\maketitle

\tableofcontents

\section{Introduction}\label{sec:intro}
The analysis of spacetime inextendibility at low regularity is intimately tied to two central questions in general relativity. The first is to understand and characterise gravitational singularities, in particular to determine how strong such singular behaviour can be. The second is connected with the statement of the strong cosmic censorship, according to which spacetimes should generically fail to admit extensions beyond singular boundaries as weak solutions of the Einstein equations. Consider the following definition.

\begin{definition}{}[$C^k$-inextendibility]
Let $k\in \{0\}\cup\mathbb{N}\cup\{\infty\}$. A smooth spacetime $(\mathcal{M},g)$ is called $C^k$-inextendible if there does not exist a spacetime $(\tilde{\mathcal{M}},\tilde{g})$ of the same dimension, equipped with a $C^k$ Lorentzian metric, with the property that there exists an isometric embedding $\iota: \mathcal{M} \hookrightarrow \tilde{\mathcal{M}}$ with $\iota(\mathcal{M}) \subsetneq \tilde{\mathcal{M}}$
\end{definition}

The regularity level in this definition is essential. A spacetime may fail to admit an extension in one regularity class while remaining extendible in a weaker one. The following elementary example illustrates this point.

\begin{example}
Let $(\mathcal{M},g)$ be a smooth spacetime with
$$
\mathcal{M}:=(0,\infty)\times\mathbb{R}^3
\quad\text{and}\quad
g:=-dt^2+a(t)^2\sum_{i=1}^{3}\left(dx^i\right)^2;\quad a(t)=1+t^{\frac{3}{2}} 
$$
and the time orientation induced by $\partial_t$. It is easy to see that $(\mathcal{M},g)$ does not admit a $C^2$-extension through $\{t=0\}$ since the Ricci scalar, given by
$$
\mathcal{R}(t)=6\left(\frac{\ddot a}{a}+\left(\frac{\dot a}{a}\right)^2\right)=\frac{9\left(1+4t^{\frac{3}{2}}\right)}{2\sqrt{t}\left(1+t^{\frac{3}{2}}\right)^2}
$$
diverges as $t\to 0$. However, $(\mathcal{M},g)$ does admit a $C^0$-extension given by $\iota:\mathcal M\hookrightarrow\tilde{\mathcal M}$ with $\tilde{\mathcal M}:=(-\infty,\infty)\times\mathbb{R}^3$, and
\[
\tilde g:=
\begin{cases}
g, & \text{for points in } \mathcal M, \\[0.3em]
-dt^2+\sum_{i=1}^{3}\left(dx^i\right)^2, & \text{for points in } (-\infty,0]\times \mathbb{R}^3.
\end{cases}
\]
\end{example}

Only in 2016, Sbierski’s seminal result showed that the maximal analytic extension of Schwarzschild spacetime—one of the basic black hole models exhibiting a curvature singularity (see \S 6 in~\cite{Wald_1984})—is $C^{0}$-inextendible ~\cite{Sbierski_2018a}. A key structural insight was later provided by Galloway, Ling, and Sbierski \cite{Galloway_2018}, who identified timelike geodesic completeness as an obstruction to extending a spacetime even at $C^{0}$-regularity, thereby enhancing the classical result by Beem-Ehrlich-Easley (see \S 6, proposition 6.16 in \cite{Beem_2017}). Their result is the following.

\begin{theorem}\label{thmobstruction}[Obstruction to $C^0$-extension, \cite{Galloway_2018}]
Every smooth globally hyperbolic Lorentzian manifold that is timelike geodesically complete is $C^{0}$-inextendible.
\end{theorem}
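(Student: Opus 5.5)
The plan is a proof by contradiction. Suppose $(\mathcal M,g)$ is smooth, globally hyperbolic and timelike geodesically complete, and that $\iota:\mathcal M\hookrightarrow\tilde{\mathcal M}$ is a $C^0$-extension with $\tilde g$ a continuous Lorentzian metric. The first step is to locate the boundary $\partial\iota(\mathcal M)\neq\emptyset$ and produce a timelike curve leaving $\mathcal M$. Since $\tilde{\mathcal M}$ is connected and $\iota(\mathcal M)$ is a proper open subset, pick $\tilde p\in\partial\iota(\mathcal M)$. Choose a chart around $\tilde p$ in which $\tilde g$ is close to Minkowski, with "narrow" and "wide" cones sandwiching $\tilde g$'s cones (standard Chruściel–Grant style near-Minkowski neighbourhoods). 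In such a chart one finds a $\tilde g$-timelike curve $\tilde\gamma:[0,1]\to\tilde{\mathcal M}$, which we may take to be future directed after possibly reversing time orientation. It should satisfy $\tilde\gamma([0,1))\subset\iota(\mathcal M)$ and $\tilde\gamma(1)\notin\iota(\mathcal M)$. Its pullback $\gamma=\iota^{-1}\circ\tilde\gamma|_{[0,1)}$ is then a future directed timelike curve in $\mathcal M$ that is future inextendible in $\mathcal M$ but has finite length in the sense that it "converges" in $\tilde{\mathcal M}$.

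The second step uses global hyperbolicity to replace $\gamma$ by a maximising geodesic. Set $p=\gamma(0)$ and $q_n=\gamma(t_n)$ with $t_n\to1$. Global hyperbolicity gives maximal timelike geodesics $\sigma_n$ from $p$ to $q_n$. Their initial unit tangents lie in the compact unit hyperboloid piece, because they are squeezed by the chart cones near $p$, so a subsequence converges to a unit timelike vector $v$. By completeness, the geodesic $\sigma(s)=\exp_p(sv)$ exists for all $s\ge0$.

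The third step, which I expect to be the main obstacle, is to derive a contradiction from the fact that $\sigma$ is a complete timelike geodesic yet should be trapped near $\tilde\gamma(1)$. The key is that each $\sigma_n$ lies in $J^+(p)\cap J^-(q_n)$. Inside the near-Minkowski chart this set is contained in a uniformly small double-cone "diamond" around $\tilde\gamma$, whose $\tilde g$-proper time is bounded by some $T<\infty$ uniform in $n$. One must justify that the causal curves $\sigma_n$ cannot exit the chart, using the wide-cone estimates. Since $\iota$ is an isometry, the $g$-lengths of $\sigma_n$ are then bounded by $T$.

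On the other hand, for each fixed $s$ one compares $\sigma(s)$ with $\sigma_n(s)$ by continuous dependence. I need to show that $\sigma$ has length at least $\liminf L(\sigma_n)$ while remaining maximising on compact pieces; this is the delicate limit-curve part, since the limit must be taken in $\mathcal M$ and not merely in $\tilde{\mathcal M}$. The conclusion I aim for is that $\sigma|_{[0,T+1]}$ stays in a compact subset of the diamond closure, and hence must accumulate at a point of $\partial\iota(\mathcal M)$. The resolution I would attempt uses the time separation. Since $\sigma$ is a limit of maximisers into $J^-(\tilde\gamma(1))$ and $\mathcal M$ is globally hyperbolic, the points $\sigma(s)$ for $s>T$ lie in $I^-$ of $\tilde\gamma(1)$ within the chart. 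This gives $d(p,\sigma(s))\ge s>T$, contradicting the uniform bound on proper time of timelike curves inside the diamond. Hence no extension exists.
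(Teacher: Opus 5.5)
Your outline has genuine gaps at the two places where the difficulty lies.

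First, the compactness claim in Step~2 is unjustified. The $g$-unit future timelike vectors at $p$ form a non-compact hyperboloid asymptotic to the null cone. The chart cones only confine $\dot\sigma_n(0)$ to the wide cone, which also contains the $g$-null directions. The lower bound $L(\sigma_n)\ge L(\gamma|_{[0,t_1]})>0$ does not prevent degeneration either, because that length may be collected near $\partial\iota(\mathcal M)$. There you control $\tilde g$ only in $C^0$ and do not control the geodesic equation of $g$ at all. If $\dot\sigma_n(0)$ degenerates, the limit is a null geodesic, and timelike completeness says nothing about it.

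Second, containment of $\sigma_n$ in the chart cannot come from cone estimates. These constrain a causal curve only while it stays in $\tilde U$. A causal curve of $\mathcal M$ from $p$ to $q_n$ may leave $\tilde U$, for instance through the spatial sides of the chart, and re-enter, since $\tilde{\mathcal M}$ need not satisfy any causality condition. So $\iota\bigl(J^+(p,\mathcal M)\cap J^-(q_n,\mathcal M)\bigr)$ need not lie in the chart diamond. The paper stresses exactly this point after \Cref{imageofhomotopy}; in the Schwarzschild-type argument it is handled only via future one-connectedness together with that homotopy lemma. Hence you have no uniform bound $T$ on $d(p,q_n)$.

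Even granting both points, Step~3 is off. For $s>T$ the points $\sigma(s)$ are not limits of points of the $\sigma_n$, so nothing places them in the chart. The correct endgame is simpler: along a subsequence with $L_n\to L^*$, completeness gives $q_n=\exp_p(L_nv_n)\to\exp_p(L^*v)\in\mathcal M$. This contradicts $\iota(q_n)\to\tilde\gamma(1)\notin\iota(\mathcal M)$.

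The missing idea, which is behind \Cref{gls2} (the ingredient the paper invokes), is to build the maximiser in the extension rather than as a limit in $\mathcal M$. After reducing to $\partial^+\iota(\mathcal M)\neq\emptyset$, one works in a near-Minkowskian chart as in \Cref{minkowskianchartproposition}. There $f_<\subseteq\tilde\varphi(\iota(\mathcal M)\cap\tilde U)$ and the future boundary is an achronal Lipschitz graph. One then uses S\"amann's theorem that length-maximising causal curves exist in continuous globally hyperbolic spacetimes, applied to a small globally hyperbolic neighbourhood inside the chart.

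This yields a maximising causal curve from a point below the graph to a future boundary point. It stays in the chart automatically, so its length is finite, and it lies in $\iota(\mathcal M)$ before its endpoint. Pulled back, it is locally maximising for the smooth metric $g$, hence a geodesic. It is timelike because its length is positive. It is future-inextendible in $\mathcal M$ with finite length, i.e.\ an incomplete timelike geodesic, which contradicts completeness. Neither compactness of initial directions nor containment of $\mathcal M$-diamonds in the chart is ever needed.
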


It is worth noting that Minguzzi and Suhr \cite{MinguzziSuhr_2019} subsequently strengthened this result by, among other improvements, removing causality assumptions on the original spacetime. See also the related work of Graf and Ling \cite{Graf_2018}, who earlier dispensed with global hyperbolicity, but at the cost of imposing stronger regularity assumptions, i.e. $C^2$ on the original, and $C^{0,1}_{\textrm{loc}}$ on the extended spacetimes. 

Sbierski later gave a streamlined proof of the $C^0$-inextendibility of the maximal analytic extension of Schwarzschild spacetime by applying the obstruction stated in ~\Cref{thmobstruction} above, in \cite {Sbierski_2018}. The aim of the present paper is to isolate the mechanism underlying this $C^0$-inextendibility argument in the Schwarzschild spacetime and to show that it continues to hold for a broader class of warped-product black hole spacetimes with a static exterior region, under symmetry assumptions weaker than spherical symmetry. Before turning to this analysis, however, we briefly place the research program of investigating the $C^0$-(in)extendibility of spacetimes in a broader context by discussing, in the next two subsections, the two central questions in general relativity mentioned above.

\subsection{Structure of spacetime singularities}\label{sec:1.1}

The singularity theorems by Penrose and Hawking show that the formation of spacetime singularities is not an exceptional occurrence but a generic outcome of Einstein’s equations. More precisely, these theorems demonstrate that, under broad and physically reasonable conditions—such as those describing complete gravitational collapse \cite{Penrose_1965} (see also \cite{Mosani_2022thesis}) or cosmological expansion \cite{Hawking_1966}—spacetimes inevitably exhibit causal geodesic incompleteness. This incompleteness is then identified as the presence of spacetime singularities. See classic monographs such as Hawking and Ellis \cite{Hawking_1973} and Wald~\cite{Wald_1984}, and also the review by Senovilla \cite{Senovilla_1998} for extensive descriptions of the theorems; see also \cite{Sen:22}, which is a more recent critical appraisal of the theorems.
One may interpret these results as only signalling a breakdown in the regularity of the metric tensor of the spacetime. This problem led to the extensions of the classical singularity theorems, assuming lower regularity on the Lorentzian metric--see \cite{Kunzinger_2015, Graf_2017, Schinnerl_2021, Kunzinger_2022}, and more recently \cite{Calisti_2025, kunzinger2026hawkingsingularitytheoremholder}-- for a pedagogical review of low-regularity extensions of the singularity theorems, see \cite{Ste:23} by Steinbauer. 

A further subtlety in the interpretation of geodesic incompleteness is illustrated by examples such as Minkowski spacetime with a point removed. Although such a spacetime is geodesically incomplete, the incompleteness does not correspond to any physical singularity, since all curvature invariants remain regular near the missing point. This highlights the importance of studying geodesic incompleteness together with inextendibility of the spacetime: only when no extension (of suitable regularity) of the spacetime exists can incomplete geodesics be understood as genuine indicators of singular behaviour rather than artefacts of an artificially truncated spacetime. In this sense, the study of inextendibility refines and complements the conclusions of the singularity theorems.

Historically, the analysis of spacetime singularities has largely proceeded through the study of curvature and of the behaviour of Jacobi fields along incomplete timelike geodesics. This reflects the expectation that, for singularities arising in physically relevant situations, such as possible end states of unhindered gravitational collapse \cite{joshi2007gravitational}, geodesic incompleteness should be accompanied by further geometric or physical pathology. Such pathologies may include curvature blow-up, but also more generally tidal distortion effects detected through the behaviour of Jacobi fields (such properties are not implied by the singularity theorems). This has led to a few notions of the ``strength of singularities" beginning with Ellis and Schmidt \cite{Ellis:1977pj} and followed by more precise formulation due to Tipler \cite{Tipler:1977zza}; see also Clarke and Królak \cite{ClarkeKrolak_1985}, Nolan \cite{Nolan:1999tw}, and Ori \cite{Ori:2000}, and the book by Clarke \cite{clarke1993analysis}, for further developments. 

The relation of such notions of strength of singularity to the extendibility of spacetime is, however, subtle, and is an open area of research. In \cite{Tipler:1977zza}, Tipler suggested that the existence of strong curvature singularities (identified by the vanishing of the volume element obtained due to the wedge product of independent Jacobi fields along the congruence of incomplete causal geodesics) is connected with the  $C^{0}$-inextendibility of the spacetime across the singularity. However, this connection was not established rigorously in his work. Nolan \cite{Nolan:1999tw} later emphasised that the presence of a strong singularity (in the sense of Tipler) does not, in principle, exclude the existence of a  $C^0$-extension of the spacetime, although no concrete example was then available. Ori subsequently provided such an example by constructing a spacetime toy model admitting a strong curvature singularity (in the sense of Tipler), but also admitting $C^{0}$ extension (see \S 2 of \cite{Ori:2000}). In particular, this demonstrates that a strong curvature singularity, as defined by Tipler, does not, in general, preclude $C^{0}$-extendibility. The connection between curvature blow-up and low regularity inextendibility (precisely $C^{0,1}_{\textrm{loc}}$-inextendibility) was  however established for the first time only recently by Sbierski in \cite{Sbi:25}.

\subsection{Strong cosmic censorship and weak solutions of Einstein's equations}\label{sec:1.2}

Strong cosmic censorship is one of the central conjectures in mathematical general relativity because it gives a precise formulation to the expectation that the Einstein equations should determine the evolution of spacetime in a genuinely deterministic way. In its original form, associated with Penrose, it asserts that spacetimes should not admit any naked singularities. The conjecture is also formulated through the Cauchy problem for the Einstein equations as follows. 

\[
\parbox{0.93\textwidth}{\centering
``For generic compact or asymptotically flat initial data for the Einstein equations, in vacuum or with reasonable matter fields, its maximal global hyperbolic development should be inextendible as a suitably regular Lorentzian manifold."
}
\]

The maximal globally hyperbolic development is the maximal spacetime evolution of the initial data that is uniquely determined, a priori, by the initial value problem. Any extension beyond this development, as a weak solution of the Einstein equations, is a priori non-unique. Thus, the strong cosmic censorship conjecture may be understood as asserting that general relativity is generically deterministic. To make precise what is meant by “suitably regular” in the formulation above, one looks for a regularity class that accommodates weak solutions--for which the Einstein tensor is well-defined distributionally. It is widely considered that the sufficient regularity class is $g\in C^0$ (where $g$ is the metric tensor)  and $\partial g\in L^2_{\textrm{loc}}$ \cite{Geroch:1986jjl} (see also \cite{Christodoulou2009}, \cite{Chrusciel1991}, \S 1.1 of \cite{Sbierski_2021}). Not very long ago, Dafermos and Luk \cite{Dafermos_2025} proved that for dynamical vacuum black holes which settle down to Kerr black hole spacetimes in the exterior, the Lorentzian metric tensor extends continuously to the Cauchy horizon. If the exterior region of the Kerr family is proven to be dynamically stable, it would follow that $C^0$-inextendibility formulation of the strong cosmic censorship conjecture is false. For further discussions on the strong cosmic censorship conjecture, see the introduction of \cite{Dafermos_2025} (see also the monograph by Joshi \cite{Joshi_1997} for the original formulation of Penrose and its relation to gravitational collapse and the formation of naked singularities; and a more recent review article by Van de Moortel \cite{VanDeMoortel_2025}).

These developments, discussed in \Cref{sec:1.1} and \Cref{sec:1.2}, underscore the importance of understanding the inextendibility of spacetimes at low regularity. They also motivate the search for geometric mechanisms that obstruct continuous extensions, both in cosmological as well as black hole spacetimes. In the cosmological setting, Galloway and Ling proved past $C^{0}$-inextendibility for a subclass of Friedmann–Lemaître–Robertson–Walker (FLRW) spacetimes within the restricted class of spherically symmetric extensions \cite{Galloway_2017}. Their result was subsequently extended by Graf and van den Beld-Serrano to the broader class of axisymmetric extensions \cite{Graf_2025}. Without imposing symmetry assumptions on the extension, Sbierski established the past $C^{0}$-inextendibility of a subclass of FLRW spacetimes without particle horizons and with constant spatial curvature $K=\pm1$ \cite{Sbierski_2023}. Ling later obtained the corresponding result for a subclass of spatially flat FLRW spacetimes, namely in the case $K=0$ \cite{Ling_2024}. Finally, in an anisotropic cosmological setting, Miethke proved the $C^{0}$-inextendibility of Kasner spacetime \cite{Miethke_2024}.

In the black hole setting, Sbierski observed in \cite{Sbierski_2018a} that the singularity structure of the Schwarzschild spacetime is not expected to be generic. In the spirit of the investigations suggested there, we consider here a larger class of black hole spacetimes with a static exterior region, including the Schwarzschild spacetime as a special case. 

\subsection{Outline of the paper}  
The paper is organised as follows. In \Cref{sec2}, we introduce the class of black hole spacetimes under consideration and state the three main results proved in this paper. More precisely, in  \Cref{sec2.1} we recall the basic definitions needed to describe the class of spacetimes under study; in \Cref{sec2.2} we specify the class of black hole spacetimes of interest; and in \Cref{sec2.3} we formulate the main theorems, namely \Cref{maintheorem}, \Cref{eitheror}, and \Cref{maintheorem2}. In \Cref{sec3}, we gather several causality results, some known and some new, that will be used later in the proofs of the main theorems. In particular, this section contains the proof of future one-connectedness for a subclass of the spacetimes described in \Cref{sec2.2}. In \Cref{sec4}, we prove  \Cref{maintheorem}, adapting Sbierski's method from \cite{Sbierski_2018} to our setting; the argument is by contradiction. In \Cref{sec5}, we prove  \Cref{eitheror}, which provides sufficient conditions for the exterior of a general warped-product black hole spacetime, without assuming spherical symmetry, to be timelike geodesically complete. In \Cref{sec6}, we present the proof of  \Cref{maintheorem2}, establishing that the class of warped-product black hole spacetimes considered here is future $C^0$-inextendible. In \Cref{sec7}, we highlight the main technical distinctions between our adapted proof of Sbierski, and Sbierski's original argument in the case of the maximal analytic extension of Schwarzschild spacetime. Finally, in \Cref{sec:examples-topology}, we discuss two examples of black hole spacetimes that are future $C^0$-inextendible, and also are not ruled out by the topological censorship theorem of Galloway \cite{Galloway_2008}. Additionally, we study Birmingham black hole spacetimes in \cite{Birmingham_1999}, and prove their future $C^0$-inextendibility through the central curvature singularity.

\section*{Acknowledgements}

The author would like to thank Clemens Sämann for his involvement in the initial phase of this project and for valuable discussions throughout the development of the paper. The author is also grateful to Leonardo García-Heveling, Eric Ling, Jan Sbierski, and Kharanshu Solanki for helpful discussions and for pointing out typos and other improvements that contributed to the current version of the manuscript.

The author gratefully acknowledges support from the program: Geometry and Convergence in Mathematical General Relativity at the Simons Center for Geometry and Physics, Stony Brook University, where this research project began.

This research was funded in whole or in part by the Austrian Science Fund (FWF) [grants DOI \href{https://www.fwf.ac.at/en/research-radar/10.55776/EFP6}{10.55776/EFP6}]. For open access purposes, the author has applied a CC BY public copyright license to any author-accepted manuscript version arising from this submission.

\section{Setup and Results}\label{sec2}
The spacetimes considered in this paper belong to a class of black hole spacetimes whose exterior region is static. Before introducing this class precisely, we fix the geometric terminology used throughout the paper in the following subsection.

\subsection{Basic definitions}\label{sec2.1}
Here, we recall the notions of Killing horizon, stationarity, black hole horizon, and staticity.
\begin{definition}\label{khdef}[Killing horizon]
   Let $(\mathcal{M},g)$ be a smooth spacetime and let  $X \in \Gamma(T\mathcal{M})$ be a Killing vector field. Consider the set $N_X := \{ p \in \mathcal{M} \mid g_p(X,X)= 0 \}.$ Any connected component of $\mathcal{N}_X$ that is a smooth null hypersurface is called a Killing horizon associated with $X$. The Killing horizon associated with the stationary Killing vector field is called the stationary Killing horizon. 
\end{definition}

To discuss stationary black hole spacetimes, one should, in principle, specify what is meant by the asymptotic region of the spacetime. Informally, a spacetime is called stationary if it admits a Killing vector field which is timelike in the asymptotic region. This formulation is sufficient for the intuition behind the class of examples considered below, but it is not by itself a precise definition unless an appropriate notion of infinity has been specified. One standard way to make this precise is through a conformal completion at infinity. Given a spacetime $(\mathcal M,g)$, the pair $(\mathcal M_1,g_1)$ is called a conformal completion at infinity of $(\mathcal M,g)$ if (Definition 3.1.1, Chruściel \cite{Chrusciel_2020}; see also \cite{Penrose_1963})
 \begin{enumerate}[(i)]
        \item $\mathcal{M}$ is the interior of $\mathcal{M}_1$,
        \item $\exists$ a differentiable function $\Omega:\mathcal{M}_1\to \mathbb{R}$ such that $g_1$, defined as $\Omega^2g$ on $\mathcal{M}$, extends by continuity to the boundary of $\mathcal{M}_1$ (with the extended metric maintaining its signature on the boundary),
       \item $\Omega (q)$
       $\begin{cases}
       >0 &~\forall~q\in \mathcal{M},\quad \textrm{and}\\
       =0 &~\forall ~q\in \partial\mathcal{M}_1~(\neq \emptyset),
       \end{cases}$
       \item $d\Omega$ is nowhere vanishing on $\partial \mathcal{M}_1$.
    \end{enumerate}
For spacetimes admitting such a conformal completion, one can define the conformal boundary at infinity by
$
\mathcal J^\pm:=\partial\mathcal M_1\cap J^\pm(\mathcal M),
$
and the domain of outer communication by
$
I^+(\mathcal J^-)\cap I^-(\mathcal J^+)\cap \mathcal M.
$
For such spacetimes, we have the following definition of asymptotic stationarity. 
\begin{definition}\label{definitionstationary}[Asymptotically stationary spacetimes admitting conformal completion]
Let $(\mathcal M,g)$ be a spacetime admitting a conformal completion at infinity $(\mathcal M_1,g_1)$. We call $(\mathcal M,g)$ asymptotically stationary if it admits a Killing vector field which is timelike in the set $U\cap\mathcal M$, where $U\subset \mathcal M_1$ is a neighbourhood of the conformal boundary $\partial \mathcal{M}_1$. This Killing vector field is called a stationary Killing vector field.
\end{definition}
In situations where no conformal completion is specified, the phrase “timelike in the asymptotic region” should be understood as a working description rather than as a complete coordinate-free definition. This will be sufficient for the class of warped-product black hole spacetimes considered below, where the radial coordinate explicitly identifies the exterior region, and where we do not know a priori if they admit conformal completion at infinity. If the stationary Killing vector field is hypersurface-orthogonal, then the spacetime is called static. Before recalling this notion, we first introduce the definition of a black hole horizon in the stationary setting.
\begin{definition}\label{bhh}[Black hole horizon]
In a given $(d+1)$-dimensional stationary spacetime $(\mathcal{M},g)$, a (cross-section of) black hole horizon is a smooth, closed (compact without a boundary), oriented, $(d-1)$-dimensional embedded submanifold $\mathcal{S} \subset \Sigma,$ where $\Sigma$ is a spacelike hypersurface in $\mathcal{M}$. The union
of the orbits of the stationary Killing vector field through $\mathcal{S}$ forms a connected component of the stationary Killing horizon, called a black hole horizon $\mathcal{H}$. By abuse of terminology, some authors also refer to the cross-section $\mathcal S$ as the black hole horizon.
\end{definition}
The above definition of black hole horizon is also a working definition for the stationary warped-product class considered in this paper, and not a general definition of a black hole horizon, which is usually defined in terms of the boundary of conformal completion of the spacetime (if it exists), and is beyond the scope of our paper.
We now recall the stronger notion of staticity. Unlike stationarity, which only requires the timelikeness of the Killing vector field near infinity, staticity requires the relevant stationary Killing vector field to be timelike throughout the spacetime and to be hypersurface-orthogonal.
\begin{definition}\label{definitionstatic}[Static spacetimes]
An asymptotically stationary spacetime is called static if the stationary Killing vector field $T$ is timelike everywhere, and $T^\perp:=\{X\in T\mathcal{M}: g(X,T)=0\}$ is integrable. In other words, $T$ is everywhere orthogonal to a family of spacelike hypersurfaces.
\end{definition}
Consider the Schwarzschild spacetime $(\mathcal{M}_S,g_S)$ with positive mass $M>0$ in the Schwarzschild coordinates $(t,r,\theta,\phi)$, where $\mathcal{M}_S=\mathbb{R}\times(0,\infty)\times \mathbb{S}^2$, and
\begin{equation*}
    g_S=-\left(1-\frac{2M}{r}\right)dt^2+\frac{dr^2}{\left(1-\frac{2M}{r}\right)}+r^2g_{\mathbb{S}^2}.
\end{equation*}
In the Kruskal-Szekeres extension (see Chap. 13 in \cite{oneill}), the set $\{r=2M\}$ is the black hole horizon. The set $\{r>2M\}$ is the domain of outer communication. The manifold $\mathcal{M}_E:=\mathcal{M}_S~\cap\{r>2M\}$ with the metric tensor $g_E:=g_S$ restricted to $\mathcal{M}_E$ is a static spacetime, while $(\mathcal{M}_S,g_S)$ is asymptotically stationary in the sense of the definitions above. 

 \subsection{The class of black hole spacetimes}\label{sec2.2}
With the abovementioned notions in place, we are now in a position to introduce the class of spacetimes considered in this paper. Consider a $(d+1)$-dimensional ($d\geq 3$) globally hyperbolic, warped-product spacetime $(\mathbb{R}\times(0,\infty)\times\mathcal{F},g)$ with a warped-product metric tensor in Schwarzschild time and radial coordinates $(t,r)$ as
$$
g:=-A(r)\ dt\otimes dt +B(r)\ dr\otimes dr +r^2 h.
$$
Here $h$ is a Riemannian metric tensor on the codimension-two fibre $\mathcal{F}$. For the class of black hole spacetimes considered here, to keep the physical interpretation of the metric coefficients transparent, it is convenient to introduce two functions
$$
\psi(r):=-\frac{1}{2}\ln{\left(A(r)B(r)\right)}, \quad \textrm{and}\quad m(r):=\frac{r}{2}\left(1-\frac{1}{B(r)}\right),
$$
and  rewrite the spacetime as follows.
\begin{equation}\label{sssst}
    \begin{split}
        \mathcal{M} & :=\mathbb{R}\times(0,\infty)\times\mathcal{F}, ~\textrm{and}\\
g &
:= 
-\exp{\left(-2\psi(r)\right)}\left(1-\frac{2m(r)}{r}\right)\,dt\otimes dt
+\left(1-\frac{2m(r)}{r}\right)^{-1}dr\otimes dr
+r^2 h.
    \end{split}
\end{equation}
Here, $\psi$ and $m$ are at least $C^2$ functions of the radial coordinate $r$. If we set the functions $\psi(r)\equiv0$ and $m(r)$ as a positive constant, then $(\mathcal{M},g)$ is a Schwarzschild spacetime with positive mass. One can see that the metric tensor above is not well defined at $r=2m(r)$. This is an issue of the choice of coordinates, as we can see by considering the ingoing Eddington--Finkelstein coordinate $v$, which is defined as
$$
v:=t+\int \frac{\exp{\left(\psi(r)\right)}}{\left(1-\frac{2m(r)}{r}\right)}\,dr.
$$
Then
$$
dt=dv-\frac{\exp{\left(\psi(r)\right)}}{\left(1-\frac{2m(r)}{r}\right)}\,dr,
$$
and the metric tensor $g$ takes the form
\begin{equation}\label{sssstefcoordinate}
    g = -\exp{\left(-2\psi(r)\right)}\left(1-\frac{2m(r)}{r}\right)\,dv^2 + 2\exp{\left(-\psi(r)\right)}\,dv\,dr + r^2h.
\end{equation}
The spacetime $(\mathcal{M},g)$ has a black hole horizon if the function $r-2m(r)$ has a positive real and regular \footnote{
By a regular root, we mean that $1-2m(r_H)/r_H=0$ and $ \frac{d}{dr}\left(1-2m(r)/r\right)\big|_{r=r_H}\neq 0.$ Since the first equation is equivalent to $r_H=2m(r_H)$, this is the same as requiring
$m'(r_H)\neq \frac12.$
This condition ensures that the zero set of $1-2m(r)/r$, or equivalently of $g(\partial_t,\partial_t)$, is non-degenerate at $r=r_H$.} root. In principle, $(\mathcal{M},g)$ could admit more than one black hole horizon.  In the main body of the paper, however, we assume for simplicity that $r-2m(r)$ has a unique positive regular root, denoted by $r_H$. The hypersurface $\{r=r_H\}$ is then the black hole horizon. The case of more than one positive regular root is treated separately in the \Cref{app:multiplebhh}, where we show that the arguments below continue to apply in such scenarios.

The vector field $\partial_t$ is a Killing vector field. 
We consider the case in which there exists exactly one root of $r-2m(r)$. This implies that $\partial_t$ is timelike for $r>r_H$ (hence the class of spacetimes as described in \eqref{sssst} is stationary. Here we assume $m(r)>0$ for all $r$). Assuming that $r_H$ is a regular root of $g(\partial_t,\partial_t)$, the set $\mathcal{H}:=\{r=r_H\}$ is a smooth hypersurface. To check that $\mathcal{H}$ is null, in Eddington-Finkelstein coordinates, $\mathcal H$ is the level set of the function $\Phi=r-r_H.$ Therefore its normal covector is $d\Phi=dr,$ i.e., $dr(X)=X(r)=0~\forall~X\in T_p\mathcal{H}$ (where $p\in\mathcal{H}$). We have that
$
g^{-1}(dr,dr)=\left(1-2m(r)/r\right)$, and hence
$
g^{-1}(dr,dr)\vert_{\mathcal H}=0.
$
Thus, the normal covector $dr$ is null along $\mathcal H$, and therefore $\mathcal H=\{r=r_H\}$ is a null hypersurface. Since $\mathcal H$ is also a connected component of the zero set of $g(\partial_t,\partial_t)$, it is a Killing horizon associated with the stationary Killing field $\partial_t$. If the fibre $\mathcal{F}$ is compact, connected, and oriented, then the cross-sections of $\mathcal H$ are smooth, closed, oriented spacelike submanifolds, and the union of the $\partial_t$-orbits through any such cross-section is the hypersurface $\mathcal H$. Therefore, $\mathcal H$ is a black hole horizon in the sense of \Cref{bhh}.

We say that the spacetime has a curvature singularity as $r\to0$ if $\lim_{r\to0}\mathcal{K}=\infty$ (here $\mathcal{K}$ is the Kretschmann scalar). The sufficient condition for this blow up is $\lim_{r\to 0}m(r)=m_0>0$ (See the proof in  \Cref{appendixproposition} in the \Cref{app:kret2})
If the spacetime is asymptotically flat, then
$\lim_{r\to \infty} m(r)<\infty,$ and $\lim_{r\to \infty} \psi(r)=0.$
These conditions ensure that for large $r$, the metric coefficients $g_{tt}\to -1$  and $g_{rr}\to 1$. Moreover, on $\{r>r_H\}$ the Killing vector field $\partial_t$ is orthogonal to the spacelike hypersurfaces $\{t=\mathrm{constant}\}$.  Hence, this class of spacetimes $(\mathcal M,g)$ restricted to $\{r>r_H\}$ is static.

For the spacetimes of the type described in \eqref{sssst} (see also \eqref{sssstefcoordinate}), we assume that $m_0>0$, which ensures the presence of a curvature singularity as $r\to 0$, manifested by the blow-up of the Kretschmann scalar. On the other hand, we do not impose asymptotic flatness. We will refer to the spacetime in the interior of the black hole horizon as $(\mathcal{M}_{\textrm{int}},g_{\textrm{int}})$, defined precisely as $\mathcal{M}_{\textrm{int}}:=\mathbb{R}\times (0,r_{H})\times \mathcal{F}$, and $g_{\textrm{int}}:=g$ restricted to $\mathcal{M}_{\textrm{int}}$. 

\subsection{Results}\label{sec2.3}

The main theorems proved in this paper are stated below. In the main body of the paper, we assume for simplicity that the function $r-2m(r)$ admits exactly one positive regular root $r_H$, so that the spacetime has a single black hole horizon. This assumption is not essential. The results remain valid when $r-2m(r)$ has several positive regular roots, corresponding to multiple black hole horizons. We discuss this case separately in \Cref{app:multiplebhh}, where we establish that \Cref{eitheror} continues to hold in the multiple-horizon setting. Consequently, the conclusions of \Cref{maintheorem} and \Cref{maintheorem2} also continue to hold in such a setting.

\begin{theorem}\label{maintheorem}[No \texorpdfstring{$C^0$}{C0}-extension through the central curvature singularity]
    Let $(\mathcal{M},g)$ be a globally hyperbolic warped-product black hole spacetime expressed in the Schwarzschild coordinates as described in \eqref{sssst}. 
    Let the functions $m(r)$ and $\psi(r)$ in the metric coefficients satisfy 
    \begin{equation}\label{conditionthm2}
            m_0=\lim_{r\to 0}m(r)>0, \quad \textrm{and} \quad \sup_{r\in(0,r_H]}\vert\psi(r)\vert<\infty.
    \end{equation}
If the fibre $(\mathcal{F},h)$ is closed, connected, homogeneous \footnote{Recall that a Riemannian manifold $(\mathcal{F},h)$ is called homogeneous if its isometry group $\mathrm{Isom}(\mathcal{F},h)$ acts transitively on $\mathcal{F}$. In other words, for every $p,q\in \mathcal{F}$, there exists $\phi\in \textrm{Isom}\left(\mathcal F,h\right)$ such that $\phi\cdot p=q$.}, and orientable \footnote{We assume that the fibre $\mathcal F$ is orientable and fix an orientation on it. Since each cross-section of the horizon is diffeomorphic to $\mathcal F$, this induces an orientation on the horizon cross-section.}, then, there does not exist a $C^0$-extension $\iota: \mathcal{M}\hookrightarrow \tilde{ \mathcal{M}}$ with the following property: there exists an affinely parametrised, future-directed and future-inextendible timelike geodesic $\rho:[0,b)\to \mathcal{M}$, where $b<\infty$, with the $\lim_{s\to b} r\circ \rho (s)=0$, and such that its future endpoint is contained in $\tilde{\mathcal{M}}$, i.e.,  $\lim_{s\to b}\left(\iota\circ \rho\right)(s)\in \partial^{+} \iota\left(\mathcal{M}\right)\subset\tilde{\mathcal{M}}$. 
\end{theorem}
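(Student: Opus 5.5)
We argue by contradiction and adapt Sbierski's argument for Schwarzschild \cite{Sbierski_2018}. Suppose $\iota:\mathcal M\hookrightarrow\tilde{\mathcal M}$ is a $C^0$-extension and $\rho:[0,b)\to\mathcal M$ is a future-inextendible timelike geodesic with $r\circ\rho\to0$ and $\tilde p:=\lim_{s\to b}\iota\circ\rho(s)\in\partial^+\iota(\mathcal M)$. We begin with the standard local structure near a future boundary point. There is a chart $\tilde\varphi:\tilde U\to(-\varepsilon_0,\varepsilon_0)\times(-\varepsilon_1,\varepsilon_1)^d$ around $\tilde p$ in which $\tilde g$ is $\delta$-close to Minkowski, so that the cones of slope between $1\pm\delta$ bracket the light cones of $\tilde g$. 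There is also a Lipschitz graph $\{x_0=f(\underline x)\}$ such that everything strictly below the graph lies in $\iota(\mathcal M)$ and is timelike related to points of $\iota\circ\rho$ near $\tilde p$. Consequently, for $s_0$ close to $b$ the set $I^-(\rho(s))\cap I^+(\rho(s_0))$ stays in a compact region of $\tilde U$ with uniformly bounded coordinate diameter. In addition, every future-directed timelike curve in $\mathcal M$ starting from a point below the graph and leaving $\tilde U$ must do so through the top of the box, and therefore has $\tilde g$-length bounded by $\approx2\varepsilon_0$.

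The geometric heart of the argument is a contradiction with this uniform smallness, obtained from the interior geometry $\mathcal M_{\rm int}$. In the interior $r$ is a time function, and the metric reads $-\frac{dr^2}{2m/r-1}+e^{-2\psi}(2m/r-1)dt^2+r^2h$. Near $r=0$ we have $2m/r-1\sim2m_0/r\to\infty$, and the bound on $\psi$ from \eqref{conditionthm2} shows that the $t$-direction is stretched like $r^{-1}$ while the fibre shrinks like $r^2$. For $s_0<s_1<b$, we will show that $I^+(\rho(s_0))\cap\{r=r(\rho(s_1))\}$ contains points whose $t$-coordinate differs from $t(\rho(s_1))$ by a large amount. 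Their distance measured by $g$ in the $t$-direction then blows up like $\int e^{-\psi}\sqrt{2m/r}\,|dt|$. To do this we compare with radial timelike curves and use the fibre homogeneity to move freely in the fibre direction, which reduces the analysis to the $(t,r)$ quotient. This step uses the fibre assumptions from \Cref{maintheorem}: the fibre is closed and homogeneous, and the isometries of $h$ extend to isometries of $g$. The key point is that $\mathcal F$ being closed makes the fibre direction compactly "wrapped". As a consequence, the set $\iota(I^+(\rho(s_0))\cap I^-(\rho(s)))$ would have to contain spacelike-separated points at large $g$-distance, while remaining inside the small box where the lengths of spacelike curves connecting these points are bounded. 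For this bound we use the Lipschitz-graph structure and the fact that the $g$-distance within $J^+(\rho(s_0))\cap J^-(\tilde p)$ is controlled via the chart.

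To turn this heuristic into a rigorous contradiction, we follow Sbierski. First, we use the future one-connectedness of $\mathcal M_{\rm int}$ (proved in \Cref{sec3} under the stated fibre hypotheses) to guarantee that timelike homotopy classes are trivial, so that the lift/embedding arguments work with the whole interior rather than a covering. Then we construct a family of future-directed timelike curves from $\rho(s_0)$ that reach $r\to0$ at $t$-values spread out over an interval whose length grows without bound, while their images under $\iota$ stay within $\tilde U$. Finally, we compare: in $\tilde U$ these curves' endpoints lie in a region of bounded $\tilde g$-diameter, but the corresponding $g$-separation on the spacelike slices $\{r=\text{const}\}$ diverges. This is the desired contradiction, and it uses the blow-up of $g_{tt}$ guaranteed by $m_0>0$.

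The main obstacle is the middle step. It requires a quantitative estimate of the spread of $I^+(\rho(s_0))$ near $r=0$ in the $t$-direction for a general $m,\psi$. We will first attempt it by explicitly integrating the radial null geodesics $dt/dr=\pm e^{\psi}(2m/r-1)^{-1}$, using only $m\to m_0>0$ and $|\psi|\le C$ to obtain two-sided bounds comparable to Schwarzschild. In addition, handling a geodesic $\rho$ with nontrivial fibre motion requires the homogeneity of $\mathcal F$ to reduce to a reference configuration.
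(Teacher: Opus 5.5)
There is a genuine gap, and it sits exactly in what you call the geometric heart.

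\textbf{The $t$-coordinate does not spread.} Parametrise a future-directed timelike curve in $\mathcal M_{\rm int}$ by $-r$. The timelike condition gives $|\dot\sigma_t|<r\,e^{\psi}/(2m-r)=O(r)$ (\Cref{lemma1}); integrating your radial null geodesics $dt/dr=\pm e^{\psi}(2m/r-1)^{-1}$ shows the same. So $t$ converges along every such curve. By \Cref{corollary1}, the $t$-width of $I^+(\rho(s_0))$ is as small as you like for $s_0$ near $b$. There is therefore no family of curves from $\rho(s_0)$ reaching $r\to0$ at $t$-values spread over an unbounded interval.

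What blows up is only the induced metric of $\Sigma_n=\{r=1/n\}$. For a \emph{fixed} $\lambda>0$ and all large $n$, the segment $[t_0-\lambda,t_0+\lambda]\times\{1/n\}\times\{y_0\}\subset I^+(\rho(s_0))$ has $\bar g_n$-length $2\lambda\,e^{-\psi(1/n)}\sqrt{2n\,m(1/n)-1}\to\infty$ (\Cref{obstruction}).

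\textbf{The far-apart points are not in the past of $\rho$ or of $\tilde p$.} The same estimates show that $I^+(\rho(s_0))\cap I^-(\rho(s))\cap\{r=r'\}$ has $t$-width $O(r'^2)$ and fibre width $O(\sqrt{r'})$. Its induced diameter is therefore $O(r'^{3/2})\to0$. The sets you actually control through the chart are these diamonds, or equivalently the part of $J^-(\tilde p)$ you mention; this is all that future one-connectedness plus Sbierski's homotopy lemma gives (\Cref{causaldiamonds same}). They never contain points at large slice distance, so your claim that $\iota(I^+(\rho(s_0))\cap I^-(\rho(s)))$ contains such points is false.

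\textbf{The missing idea is statement (A).} Let $\gamma$ be the vertical chart curve ending at $\tilde p$. You must show that all of $\iota(I^+(\gamma(-\mu),\mathcal M))$ lies in the chart. A priori, a future-directed timelike curve from $\gamma(-\mu)$ can leave $\tilde U$ through the sides of the box. It can never cross $\operatorname{graph}f$, so your ``through the top'' claim and the Lorentzian-length bound are beside the point. After leaving, it can end at points of $I^+(\gamma(-\mu))$ whose images are nowhere near $\tilde p$.

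The paper obtains (A) from Sbierski's barrier argument:
\begin{enumerate}[(i)]
\item The set $K$ of \eqref{Kdefn} has compact closure (\Cref{lemmakclcompact}, using \Cref{lemma1} and \Cref{prerequisite to clKcompact}).
\item $K$ timelike separates $\gamma((y_0^+,0))$ from $I^-(\gamma(y_0^-))$ (\Cref{Ktls}).
\item The neighbourhood $W$ of $\overline K$ timelike separates all of $I^+(\gamma(-\mu))$ from $\gamma(x_0^-)$ (\Cref{Wtls2}). This is because any past-directed timelike curve from $I^+(\gamma(-\mu))$ can be carried to one starting on $\gamma$ by a small $t$-translation and a fibre isometry uniformly close to the identity (\Cref{lemmaofisometry}).
\item Achronality of $\operatorname{graph}f$ then turns this separation into containment.
\end{enumerate}
Step (iii) is where homogeneity is really used, not to reduce to a $(t,r)$-quotient.

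\textbf{Even granting (A), your final comparison is not a contradiction.} ``Bounded $\tilde g$-diameter versus diverging $g$-separation'' fails on its own terms. Spacelike-separated points have no Lorentzian distance, and a small coordinate region bounds nothing about intrinsic distances in $\Sigma_n$. You also need statement (B): the bottom of the box lies in $I^-(\gamma(-\mu))$. Then every Cauchy surface meeting $I^+(\gamma(-\mu))$ is a spacelike graph $x_0=\hbar(\underline x)$ over the whole cube, with uniformly bounded $|\partial_i\hbar|$. This yields the uniform bound $C_d$ of statement (C), and it is (C) that contradicts \Cref{obstruction}.
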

The condition $m_0>0$ ensures that the level-sets of $r$ are spacelike inside the horizon and close to $r=0$, thereby ensuring that the central singularity (identified by the blow-up of the Kretschmann scalar) is spacelike (just like the case of Schwarzschild spacetime). \Cref{maintheorem} shows that, for the class of black hole spacetimes satisfying the assumptions stated above, the spacetime does not admit a future $C^0$-extension through the central curvature singularity as $r\to 0$. 

In the case of maximal analytic extension of Schwarzschild spacetime denoted by $(\mathcal{M}_{\textrm{max}},g_{\textrm{max}})$, it is already known that for a future-directed, future-inextendible timelike geodesic $\gamma:[-1,0)\to \mathcal{M}_{\textrm{max}}$, either $r\circ\gamma\to 0$ or $\gamma$ is future complete (Proposition 36 in Chap 13 of \cite{oneill}). So for $\gamma$ such that $\textrm{Im}(\gamma)\subset \mathcal{M}_{\textrm{ext}}$, $\gamma$ is future complete. This is then used together with \Cref{thmobstruction} to exclude the $C^0$-extension through the exterior of the spacetime. The analogue of Proposition 36 in Chap 13 of \cite{oneill} for a warped product black hole spacetime as described in \Cref{sec2.2} is the following.

\begin{theorem}\label{eitheror}[The timelike geodesic dichotomy]
    Let $(\mathcal{M},g)$ be a globally hyperbolic warped-product black hole spacetime as described in \eqref{sssst} with closed \footnote{We require compactness and completeness of  $(\mathcal{F},h)$. Completeness is implied by closedness: compactness with empty boundary.} and orientable Riemannian fibre $(\mathcal{F},h)$. Let the functions $m(r)$ and $\psi(r)$ in the metric coefficients satisfy 
    \begin{equation}\label{eitherorconstraints}
    \begin{split}
       &  \lim_{r\to0}m(r)=m_0>0, \quad \sup_{r>r_H}\exp{\left(-2\psi(r)\right)\left(1-\frac{2m(r)}{r}\right)}<\infty, 
       \\  & \sup_{r>r_H}\vert\psi(r)\vert<\infty, \quad \psi'(r)\leq \min\left\{ 0, \frac{2}{r} \left(\frac{m(r)-rm'(r)}{r-2m(r)}\right)\right\}~\forall~r>r_H.
    \end{split}
    \end{equation}
    Consider an affinely parametrised, future-directed, and future-inextendible timelike geodesic $\rho:[0,b)\to \mathcal{M}$ with $b\in(0,\infty]$. Then we either have $r\circ\rho(s):=\rho_r(s)\to 0$ as $s\to b$ or $\rho$ is future complete, i.e., $b=\infty$.
\end{theorem}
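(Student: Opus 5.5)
The plan is to reduce the geodesic equations to a one-dimensional radial problem using the conserved quantities of the warped product, and then treat separately the behaviour inside the horizon, near it, and in the exterior. Write $F(r):=1-2m(r)/r$ and $\rho=(\rho_t,\rho_r,\rho_{\mathcal F})$. Since $\partial_t$ is Killing, $E:=-g(\dot\rho,\partial_t)=e^{-2\psi}F\,\dot\rho_t$ is constant. In the exterior $E>0$ because $\rho$ is future-directed. By the standard warped-product computation, the fibre component $\rho_{\mathcal F}$ is a pregeodesic of $(\mathcal F,h)$ with $r^4 h(\dot\rho_{\mathcal F},\dot\rho_{\mathcal F})=L^2$ constant. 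Inserting these into $g(\dot\rho,\dot\rho)=-1$ gives the radial equation
\begin{equation*}
\dot\rho_r^{\,2}=e^{2\psi}\Bigl(E^2-V(\rho_r)\Bigr),\qquad V(r):=e^{-2\psi(r)}F(r)\Bigl(1+\frac{L^2}{r^2}\Bigr).
\end{equation*}
In particular $\dot\rho_r^{\,2}\le E^2\sup_{r>r_H}e^{2\psi}<\infty$ wherever $V\ge0$, i.e. in the exterior. Inside the horizon $\dot\rho_r^{\,2}\ge -F\,e^{2\psi}\cdot e^{-2\psi}$, i.e. $\dot\rho_r^{\,2}\ge |F|$.

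\textbf{Interior case.} Suppose $\rho$ enters $\{r<r_H\}$. There $\partial_r$ is timelike, so $\rho_r$ is strictly monotone, and by the choice of time orientation (ingoing Eddington--Finkelstein) it is strictly decreasing. The geodesic cannot return, since $\{r=r_H\}$ can only be crossed inward by future-directed curves. By monotonicity, $\rho_r\to r_*\in[0,r_H)$. If $r_*>0$, then on $[r_*,r_H-\delta]$ we have $|F|\ge c>0$ because $r_H$ is the unique root and $F<0$ inside. Hence $\dot\rho_r\le-\sqrt c$, and $\rho_r$ would go below $r_*$ in finite parameter, a contradiction. Thus $\rho_r\to0$.

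\textbf{Exterior case.} Suppose $\rho_r(s)\ge r_H$ for all $s$, and suppose for contradiction that $b<\infty$. The conditions in \eqref{eitherorconstraints} enter here. Since $E^2>0=V(r_H)$, continuity gives $\delta>0$ such that $V<E^2$ on $[r_H,r_H+\delta]$, so $\dot\rho_r\neq0$ there. It follows that near the horizon $\rho_r$ is strictly monotone with $|\dot\rho_r|$ bounded below. So if $\rho_r$ gets close to $r_H$ while moving inward, it crosses $\{r=r_H\}$ in finite parameter and we are in the interior case. To show the crossing really happens inside $\mathcal M$, I would use the $v$-coordinate: $\dot v=e^{\psi}(Ee^{\psi}+\dot\rho_r)/F$, and for $\dot\rho_r<0$ we have $Ee^{\psi}+\dot\rho_r=F(1+L^2/r^2)/(Ee^{\psi}-\dot\rho_r)$, so $\dot v$ stays bounded up to and across the horizon. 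Otherwise, on $[0,b)$ we have $\rho_r\ge r_H+\delta'$ for some $\delta'>0$. Then $\dot\rho_t=Ee^{2\psi}/F$ is bounded, $\dot\rho_r$ is bounded, and $h(\dot\rho_{\mathcal F},\dot\rho_{\mathcal F})\le L^2/r_H^4$. With $b<\infty$, this shows that $\rho_t$ and $\rho_r$ stay in bounded intervals, and since $\mathcal F$ is compact, $\rho$ remains in a compact subset $K$ of $\mathcal M$ with velocity bounded with respect to a background complete Riemannian metric. A geodesic with finite affine parameter, contained in a compact set and having bounded velocity, is extendible; this contradicts future-inextendibility, so $b=\infty$.

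\textbf{Main obstacle.} The hard part is the exterior bookkeeping: excluding the possibility that $\rho_r$ oscillates and accumulates at $r_H$ without crossing, and controlling the shape of $V$. This is where the condition $\psi'\le\min\{0,\tfrac2r\frac{m-rm'}{r-2m}\}$ should be used. A computation gives $(e^{-2\psi}F)'=e^{-2\psi}\bigl(-2\psi'F+\tfrac{2(m-rm')}{r^2}\bigr)$, and the condition on $\psi'$ makes this nonnegative, so $e^{-2\psi}F$ is nondecreasing on $(r_H,\infty)$. I would try first to show that this monotonicity gives $V$ at most one interior critical structure. Together with the bounds $\sup e^{-2\psi}F<\infty$ and $\sup|\psi|<\infty$, this should rule out turning points of $\rho_r$ accumulating near $r_H$ and keep all coefficient bounds uniform.
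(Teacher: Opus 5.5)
There is a gap in your interior case. From $\dot\rho_r\le-\sqrt c$ you conclude that $\rho_r$ ``would go below $r_*$ in finite parameter, a contradiction''. That is only a contradiction if the geodesic is still defined after that parameter time. If $b<\infty$, nothing prevents $\rho$ from ceasing to exist while $\rho_r\downarrow r_*>0$. As written, you have only excluded $r_*>0$ when $b=\infty$, which is exactly the case already covered by the second alternative of the dichotomy. The case that matters, $b<\infty$ with $r_*>0$, is untouched.

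The missing step is the extendibility argument you already use in the exterior; it is the forward implication of \Cref{eitherorprop2}. Suppose $\rho$ lies in the interior from $s_1$ on. Strict monotonicity gives $\rho_r\in[r_*,\rho_r(s_1)]\Subset(0,r_H)$ on $[s_1,b)$, so $|F|$ is bounded below and $\psi$ is bounded there. Hence $\dot\rho_t=Ee^{2\psi}/F$, $\dot\rho_r$ (from your radial equation) and $h(\dot\rho_{\mathcal F},\dot\rho_{\mathcal F})=L^2/\rho_r^4$ are bounded, and $\rho_t$ is bounded because $b<\infty$. Compactness of $\mathcal F$ then puts $(\rho,\dot\rho)$ in a compact subset of $T\mathcal M$, so $\rho$ is extendible, a contradiction. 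With this added, your interior argument in fact shows $r_*=0$ for every interior geodesic.

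Your exterior argument is correct and genuinely different from the paper's. The paper proves \Cref{eitherorprop1} in ingoing coordinates for metrics with $v$-dependent $f,l$, using no Killing field. It therefore needs $\partial_r l\ge0$, $f\partial_r l-l\partial_r f\le0$ (exactly the hypothesis on $\psi'$) and the bounds on $f,l$, in order to bound $\dot\rho_v$ above and away from zero through a differential inequality. You instead use the conserved energy $E>0$ to reduce to the one-dimensional equation $\dot\rho_r^2=e^{2\psi}(E^2-V)$.

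The ``main obstacle'' you describe is already settled by your own strip argument. Since $V(r_H)=0<E^2$, there are no turning points and $|\dot\rho_r|$ is bounded below on $[r_H,r_H+\delta]$. So once $\rho_r$ lies in the strip with $\dot\rho_r<0$, it decreases at a definite rate and cannot turn back. It then either reaches $r_H$, or, if $b$ comes first, your bound $0<\dot v\le(1+L^2/r_H^2)/E$ gives extendibility in $(v,r,y)$ coordinates. If instead $\dot\rho_r>0$ whenever $\rho_r<r_H+\delta$, then $\rho_r\ge\min\{\rho_r(0),r_H+\delta\}$ throughout. In that case $|\dot\rho_r|\le E\,e^{\sup_{r>r_H}\psi}$ bounds $\rho_r$ from above when $b<\infty$, and your compactness argument closes.

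So your route needs only $\sup_{r>r_H}\psi<\infty$, not the condition on $\psi'$ or the bound on $e^{-2\psi}F$, and the speculative last paragraph should be dropped. What you give up is generality: your argument relies on staticity of the exterior, which the paper deliberately avoids so that the exterior step also covers non-stationary metrics.
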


Finally, combining \Cref{maintheorem}, \Cref{eitheror}, and the result by Galloway-Ling-Sbierski \cite{Sbierski_2018}, the following theorem can be obtained. 

\begin{theorem}\label{maintheorem2}[Main $C^0$-inextendibility result]
      Let $(\mathcal{M},g)$ be a globally hyperbolic warped-product black hole spacetime as described in \eqref{sssst} with the Riemannian fibre $(\mathcal{F},h)$ being closed, connected, homogeneous, and orientable.  Let the functions $m(r)$ and $\psi(r)$ in the metric coefficients satisfy 
   \begin{equation}\label{conditionfinaltheorem}
    \begin{split}
&\lim_{r\to0}m(r)=m_0>0, \\  
&\sup_{r>r_H}\exp{\left(-2\psi(r)\right)\left(1-\frac{2m(r)}{r}\right)}<\infty,\\ &\sup_{r\in(0,\infty)}\vert\psi(r)\vert<\infty, \textrm{and}\\
&\psi'(r)\leq \min\left\{ 0, \frac{2}{r} \left(\frac{m(r)-rm'(r)}{r-2m(r)}\right)\right\}~\forall~r>r_H.
    \end{split}
    \end{equation}
Then, $(\mathcal{M},g)$ is future $C^0$-inextendible. 
\end{theorem}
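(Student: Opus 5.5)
We argue by contradiction, following Sbierski's scheme for the maximal analytic Schwarzschild spacetime. Suppose $\iota:\mathcal{M}\hookrightarrow\tilde{\mathcal{M}}$ is a future $C^0$-extension, so that $\partial^+\iota(\mathcal{M})\neq\emptyset$. By the standard structure result for future boundaries of $C^0$-extensions (as used in \cite{Sbierski_2018}, based on \cite{Galloway_2018}), there exist a point $\tilde p\in\partial^+\iota(\mathcal{M})$ and a future-directed, future-inextendible timelike curve in $\mathcal{M}$ whose image under $\iota$ converges to $\tilde p$. Because $\mathcal{M}$ is globally hyperbolic and $\tilde g$ is continuous, the limit-curve and maximisation argument of Galloway--Ling--Sbierski upgrades this to an affinely parametrised, future-directed, future-inextendible timelike \emph{geodesic} $\rho:[0,b)\to\mathcal{M}$ with $\lim_{s\to b}\iota\circ\rho(s)\in\partial^+\iota(\mathcal{M})$. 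One can take $\rho$ to be a maximising geodesic from a point near $\tilde p$ inside a globally hyperbolic neighbourhood, or argue as in the proof of \Cref{thmobstruction}. In particular, $\rho$ has finite affine length, so $b<\infty$. This is exactly the mechanism behind \Cref{thmobstruction}: a geodesic of infinite affine length cannot leave $\iota(\mathcal{M})$ through a boundary point of a $C^0$-extension.

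Next we apply \Cref{eitheror}. The hypotheses \eqref{conditionfinaltheorem} contain those of \eqref{eitherorconstraints}: the fibre is closed and orientable, $m_0>0$, the sup bound on $e^{-2\psi}(1-2m/r)$ holds, $|\psi|$ is bounded on $(0,\infty)\supset(r_H,\infty)$, and the inequality on $\psi'$ holds. Hence the geodesic $\rho$ from the first step either satisfies $\rho_r(s)\to0$ as $s\to b$ or is future complete. Future completeness would give $b=\infty$, contradicting $b<\infty$. Therefore $r\circ\rho(s)\to0$.

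Finally we invoke \Cref{maintheorem}. Its hypotheses \eqref{conditionthm2} follow from \eqref{conditionfinaltheorem}, since $\sup_{(0,r_H]}|\psi|\le\sup_{(0,\infty)}|\psi|<\infty$, and the fibre is closed, connected, homogeneous and orientable. \Cref{maintheorem} forbids precisely a $C^0$-extension admitting such a $\rho$: an affinely parametrised, future-directed, future-inextendible timelike geodesic with $b<\infty$, $r\circ\rho\to0$, and future endpoint in $\partial^+\iota(\mathcal{M})$. This is the desired contradiction.

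The main point needing care is the first step: producing a timelike \emph{geodesic}, not merely a timelike curve, that leaves through $\partial^+\iota(\mathcal{M})$ with finite affine parameter. Here I would cite the Galloway--Ling--Sbierski construction directly, namely Lemma/Theorem in \cite{Galloway_2018} as reformulated in \cite{Sbierski_2018}. That construction relies only on global hyperbolicity of $(\mathcal{M},g)$ and smoothness of $g$, both of which are assumed here. The remaining steps are checks that the hypotheses of the earlier theorems are met.
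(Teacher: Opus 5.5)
Your proof is correct and follows the paper's own argument: obtain a future-inextendible timelike geodesic leaving through $\partial^+\iota(\mathcal{M})$ via Galloway--Ling--Sbierski, use \Cref{eitheror} to force $r\circ\rho\to 0$, and then contradict \Cref{maintheorem}. The only difference is how the complete alternative is excluded. You observe directly that this geodesic has finite affine length: its tail lies in a near-Minkowskian chart around the endpoint, where timelike curves have uniformly bounded length, a point worth spelling out. The paper instead appeals to \Cref{thmobstruction}, and your justification is the more accurate one, since \Cref{thmobstruction} as stated applies only to spacetimes that are timelike geodesically complete as a whole.
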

The first and third conditions in \eqref{conditionfinaltheorem} ensures that \Cref{maintheorem} applies. The remaining conditions in \eqref{conditionfinaltheorem} ensure that \Cref{eitheror} applies as well. Consequently, \Cref{maintheorem2} follows from \Cref{maintheorem} and \Cref{eitheror} (along with the main result in \cite{Galloway_2018}). This theorem generalises Sbierski's result in \cite{Sbierski_2018} by adapting his methodology to a bigger class of black hole spacetimes. Its proof, in particular the proofs of \Cref{maintheorem} and \Cref{eitheror}, is based on the arguments of \cite{Sbierski_2018a,Sbierski_2018,Sbierski_2021}, adapted here to the present warped-product setting. 

It is not essential for the exterior region of $(\mathcal M,g)$ to be static. Rather, what is crucial is that the interior admits the additional Killing symmetry generated by $\partial_t$ \footnote{private communication with Jan Sbierski.}. Indeed, consider \Cref{gls2} in the next section. If a $C^0$-extension exists, then there is a future-directed timelike geodesic in $\mathcal M$ with an endpoint on the future boundary of the extension. Let this geodesic escape through the exterior region, and let its image be entirely contained in the exterior and not intersect the black hole horizon. Then, \Cref{eitherorprop1} implies that it is future complete (which then acts as an obstruction to future $C^0$-extension from \Cref{thmobstruction}). Moreover, the proof of \Cref{eitherorprop1} does not rely on the existence of a stationary Killing vector field $\partial_t$.  Thus, the argument excluding a $C^0$-extension through the exterior does not use the existence of a stationary Killing vector field: staticity of the exterior plays no role in obstructing such an extension.

By contrast, the proof excluding a $C^0$-extension through the interior, and in particular through the central singularity $r=0$, does rely on the Killing symmetry generated by $\partial_t$ in the interior.

\section{Technical Tools}\label{sec3}
In this section, we collect the technical tools needed for the proof of ~\Cref{maintheorem}, \Cref{eitheror}, and \Cref{maintheorem2}. Some of these are known results from the literature, while others are established here for the class of warped-product black hole spacetimes under consideration.
\begin{definition}{}[Future/past boundary set]
    Let $(\mathcal{M},g)$ be a spacetime, with $g\in C^0$, admitting a $C^0$-extension $\iota:\mathcal{M}\hookrightarrow\tilde{\mathcal{M}}$. The future boundary set, denoted by $\partial^+\iota(\mathcal{M})$, is defined as the set of all points $\tilde p\in\tilde M$ with the property that there exists a timelike curve $\tilde \gamma : [-1,0]\to \tilde M$ such that 
    \begin{enumerate}[(i)]
        \item $\textrm{Im}\left(\tilde \gamma\vert_{[-1,0)}\right)\subseteq \iota(\mathcal{M})$,
        \item $\tilde \gamma(0)=\tilde{p}\in \partial\iota (\mathcal{M})$, and
        \item $\iota^{-1}\circ \tilde \gamma \vert_{[-1,0)}$ is future-directed in $\mathcal{M}$.
    \end{enumerate}
    The past boundary set $\partial ^{-}\iota (\mathcal{M})$ is defined analogously.
\end{definition}
 We have from Lemma 2.17 of \cite{Sbierski_2018a} (see also Proposition 2.3 of \cite{Galloway_2016}) that if $(\mathcal{\mathcal{M}},g)$ admits a $C^0$-extension, then $\partial^{-}\iota(\mathcal{M})\cup\partial^{+}\iota(\mathcal{M})\neq \emptyset$. Additionally, $\partial^{+}\iota(\mathcal{M})$ and $\partial^{-}\iota(\mathcal{M})$ may not necessarily be disjoint. Moreover, a priori, the boundary set, or any subset of it, may not be achronal in the extension. However, if the spacetime is globally hyperbolic, then we have the following result by Sbierski (Proposition 1 in \cite{Sbierski_2018} and Lemma 2.4 in \cite{Sbierski_2018a}). 
 
\begin{proposition}\label{minkowskianchartproposition}[The existence of Minkowskian chart]
      Let $(\tilde{\mathcal{M}},\tilde{g})$ be a $C^0$ extension of a $(d+1)$-dimensional ($d\geq 3$) globally hyperbolic spacetime $(\mathcal{M},g)$. Let $\tilde p\in \partial^+{\iota}(\mathcal{M})$. For every $\delta>0$, there exists a chart $\tilde{\varphi}:\tilde{U} \to \mathbb{R}_{\varepsilon_0,\varepsilon_1}\coloneqq(-\varepsilon_0,\varepsilon_0)\times(-\varepsilon_1,\varepsilon_1)^d$, where $\varepsilon_0,\varepsilon_1 >0$, having the following properties:
    \begin{enumerate}[(i)]
        \item $\tilde p\in\tilde{U}$ and $\tilde{\varphi}(\tilde p)=(0,\cdots,0)$,
        \item $|\tilde{g}_{\mu\nu} -\eta_{\mu\nu}|<\delta$, where $\eta_{\mu\nu} = \operatorname{diag}(-1,1,\cdots,1)$ is the metric tensor of the Minkowski spacetime,
        \item There exists a Lipschitz continuous function $f:(-\varepsilon_1,\varepsilon_1)^d\to (-\varepsilon_0,\varepsilon_0)$ such that:
        \begin{equation*}
            f_{<}:= \{(x_0,\underline{x})\in \mathbb{R}_{\varepsilon_0,\varepsilon_1} | x_0 < f(\underline{x})\} \subseteq \tilde{\varphi}(\iota(\mathcal{M})\cap \tilde{U}),
        \end{equation*}
        and
        \begin{equation*}
            \mathrm{graph}f := \{(x_0,\underline{x})\in \mathbb{R}_{\varepsilon_0,\varepsilon_1} | x_0 = f(\underline{x})\} \subseteq \tilde{\varphi}(\partial^{+}_{\iota}(\mathcal{M})\cap\tilde{U}).
        \end{equation*}
        Moreover $\operatorname{graph}f$ is achronal in $\mathbb{R}_{\varepsilon_0,\varepsilon_1}$. 
    \end{enumerate} 
\end{proposition}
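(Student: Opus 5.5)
We prove the existence of the Minkowskian chart in two stages. First we build a chart in which the metric is close to Minkowski. Then we use global hyperbolicity of $(\mathcal{M},g)$ to show that, near $\tilde p$, the boundary $\partial^+\iota(\mathcal{M})$ is the graph of a Lipschitz function with $\iota(\mathcal{M})$ lying below it.

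\textbf{Step 1 (near-Minkowski chart).} Take any chart around $\tilde p$. By a constant linear change of coordinates, make $\tilde g_{\mu\nu}(\tilde p)=\eta_{\mu\nu}$ and put $\tilde p$ at the origin. By continuity of $\tilde g$, shrink the domain to a small box $\mathbb{R}_{\varepsilon_0,\varepsilon_1}$ on which $|\tilde g_{\mu\nu}-\eta_{\mu\nu}|<\delta$. For $\delta$ small, the cones $\{x_0=\pm c|\underline{x}|\}$ with slopes $c$ slightly larger than $1$ are uniformly timelike. The cones with slopes slightly smaller than $1$ are uniformly spacelike. Hence every $\tilde g$-causal curve in the box is a graph over $x_0$ with slope at most $1+O(\delta)$, and $\partial_{x_0}$ is future-directed timelike. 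We fix the time orientation so that the curve $\tilde\gamma$ witnessing $\tilde p\in\partial^+\iota(\mathcal M)$ is future-directed.

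\textbf{Step 2 (the set below the boundary).} We use the defining curve $\tilde\gamma$, which enters $\tilde p$ from $\iota(\mathcal M)$ in the past. We may take $\varepsilon_1\ll\varepsilon_0$. Then for a point $\tilde q=\tilde\gamma(-s)$ with $s$ small, the chronological future of $\tilde q$ within the box covers a full horizontal slab of width $\varepsilon_1$ above some height. Define
\[
f(\underline{x}):=\sup\{x_0\in(-\varepsilon_0,\varepsilon_0): (y_0,\underline{x})\in\tilde\varphi(\iota(\mathcal M)\cap\tilde U)\ \forall\, y_0\in(-\varepsilon_0,x_0)\}.
\]
Equivalently, $f(\underline{x})$ is the first exit time of the vertical line through $\underline{x}$ from $\iota(\mathcal M)$.

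The key claim is that each vertical segment starting low in the box lies in $\iota(\mathcal M)$ initially, and that once it leaves $\iota(\mathcal M)$ it never re-enters it within the box. Entering initially follows because the bottom of the box can be arranged to lie in $I^-(\tilde q)\subset\iota(\mathcal M)$. Here we use that $\iota(\mathcal M)$ is open and that $\tilde\gamma$ stays in it up to $\tilde p$. Non-re-entry is the crucial use of global hyperbolicity. Suppose a future-directed timelike segment left $\iota(\mathcal M)$ and came back. Then we would get a timelike curve in $\mathcal M$, from a point near $\tilde q$ to a later point, that partially leaves $\iota(\mathcal M)$. We argue as in Sbierski's Lemma 2.4 with a Cauchy surface or time function. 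The past portion of the vertical line inside $\iota(\mathcal M)$ is future-inextendible in $\mathcal M$, since it leaves $\mathcal M$. Yet it lies in $J^-$ of a point of $\mathcal M$ reached after re-entry, because a timelike curve through the box connects them. Such an inextendible causal curve would be trapped in a compact set $J^+(\cdot)\cap J^-(\cdot)$ of the globally hyperbolic $\mathcal M$, which is a contradiction. This step is the main obstacle. One must make sure that the connecting curves stay in the box and that the compactness argument is applied in $\mathcal M$ rather than in the extension.

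\textbf{Step 3 (Lipschitz and achronal graph).} Given non-re-entry, $f_<\subseteq\tilde\varphi(\iota(\mathcal M)\cap\tilde U)$ holds by construction. We then show $|f(\underline{x})-f(\underline{y})|\le (1+O(\delta))|\underline{x}-\underline{y}|$. Suppose $f(\underline{y})$ exceeded $f(\underline{x})$ by more than this amount. Then the point $(f(\underline{x}),\underline{x})$ would lie in the timelike past cone of some point below $(f(\underline{y}),\underline{y})$, which is in $\iota(\mathcal M)$. Take a timelike segment from the boundary point $(f(\underline{x}),\underline{x})$ to a point of $\iota(\mathcal M)$. Since $\iota(\mathcal M)$ is open, we can perturb it to start in $\iota(\mathcal M)$ just below $(f(\underline{x}),\underline{x})$. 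The perturbed segment then leaves and re-enters $\iota(\mathcal M)$, contradicting Step 2. The same argument with slopes $\le 1+O(\delta)$ gives achronality of $\operatorname{graph} f$: two points on the graph connected by a timelike curve would again produce an exit followed by re-entry.

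Finally, each graph point $(f(\underline{x}),\underline{x})$ is the endpoint of the vertical timelike curve coming from $f_<\subseteq\iota(\mathcal M)$. That curve is future-directed in $\mathcal M$, and the point lies on $\partial\iota(\mathcal M)$ by definition of the supremum. Hence $\operatorname{graph} f\subseteq\tilde\varphi(\partial^+\iota(\mathcal M)\cap\tilde U)$. Shrinking $\varepsilon_0$ so that $f$ takes values in $(-\varepsilon_0,\varepsilon_0)$ completes the construction.
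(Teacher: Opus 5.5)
The paper gives no proof of its own here; it quotes Sbierski. So I measure your sketch against what such a proof must contain. There is a genuine gap: your Step 2 and Step 3 rest on a claim that is false.

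\textbf{The non-re-entry claim fails.} Step 2 asserts that a vertical line which has left $\iota(\mathcal{M})$ never re-enters it inside the box. This is false for globally hyperbolic $\mathcal{M}$.
\begin{itemize}
\item Take $\mathcal{M}=(0,2\pi)\times\mathbb{R}^d$ and $\tilde{\mathcal{M}}=(\mathbb{R}/2\pi\mathbb{Z})\times\mathbb{R}^d$, both with metric $-dt^2+|dx|^2$. Let $\tilde p=([0],0)$, which lies in $\partial^+\iota(\mathcal{M})$ via $t\nearrow 2\pi$.
\item Every vertical line through any box around $\tilde p$ leaves $\iota(\mathcal{M})$ at $t\equiv 0$ and re-enters immediately.
\item Your imprisonment argument fails exactly where you flagged it. The curve in the box joining the pre-exit part to the re-entry point passes through $\tilde{\mathcal{M}}\setminus\iota(\mathcal{M})$, so it gives no relation in $J^-(\cdot,\mathcal{M})$. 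In the example the re-entry points even lie in the $\mathcal{M}$-past of the pre-exit points.
\end{itemize}
The proposition says nothing about the region above the graph. Also, $f_<\subseteq\tilde\varphi(\iota(\mathcal{M})\cap\tilde U)$ already holds by the definition of $f$. So non-re-entry is the wrong target, and Step 3 inherits the problem.

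\textbf{Further defects.}
\begin{itemize}
\item In Step 3, a segment that starts in $\iota(\mathcal{M})$ just below $(f(\underline x),\underline x)$ and ends in $\iota(\mathcal{M})$ need not meet $\partial\iota(\mathcal{M})$ at all. So ``the perturbed segment leaves and re-enters'' does not follow.
\item The inclusion $I^-(\tilde q)\subset\iota(\mathcal{M})$ does not follow from openness, which gives only a small ball around $\tilde q$. The inclusion is really a consequence of the proposition. You can repair this: boost so that $\tilde q$ lies directly below $\tilde p$, then put the bottom slab and choose $\varepsilon_1$ inside a ball around $\tilde q$ contained in $\iota(\mathcal{M})$.
\item Shrinking $\varepsilon_0$ at the end moves the bottom of the box out of the region you know lies in $\iota(\mathcal{M})$.
\end{itemize}

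\textbf{What you actually need.} Global hyperbolicity should enter through the following statement. With $f$ the first-exit time, no point $\tilde w\in f_<$ lies in $I^+(e,\mathbb{R}_{\varepsilon_0,\varepsilon_1})$ for an exit point $e=(f(\underline x),\underline x)$.

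Three facts make this work:
\begin{itemize}
\item $f$ is lower semicontinuous, so $f_<$ is open.
\item $f_<\subseteq\iota(I^+(q',\mathcal{M}))$ for a point $q'$ below the bottom slab.
\item The vertical curve below an exit point is future-inextendible in $\mathcal{M}$.
\end{itemize}

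The argument runs as follows.
\begin{enumerate}
\item Take a timelike curve $c$ in the box from $e$ to $\tilde w$. Let $\lambda^*$ be the largest parameter with $c(\lambda^*)\notin f_<$; this set is closed because $f_<$ is open. Let $\underline z$ be the spatial projection of $c(\lambda^*)$, so $f(\underline z)\le c_0(\lambda^*)$.
\item Fix $t<f(\underline z)$ and a small $\eta>0$. By lower semicontinuity at $\underline z$, join $(t,\underline z)$ by a short steep straight segment, lying below height $f(\underline z)-\eta$, to the vertical line under $c(\lambda)$ for some $\lambda$ slightly larger than $\lambda^*$.
\item Go up that vertical line to $c(\lambda)$, then follow $c$ to $\tilde w$. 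This timelike curve stays in $f_<\subseteq\iota(\mathcal{M})$.
\item Hence the inextendible vertical curve under $(f(\underline z),\underline z)$ is imprisoned in $J^+(q',\mathcal{M})\cap J^-(w,\mathcal{M})$. This set is compact by global hyperbolicity, which is a contradiction.
\end{enumerate}

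This one statement gives the rest.
\begin{itemize}
\item Every vertical line exits below $\varepsilon_0$. Use $f(0)\le 0$, which holds because $\tilde p\notin\iota(\mathcal{M})$, together with $\varepsilon_1\ll\varepsilon_0$.
\item $\operatorname{graph}f$ is achronal, and hence $f$ is Lipschitz.
\item $\operatorname{graph}f\subseteq\partial^+\iota(\mathcal{M})$ then follows as you say.
\end{itemize}
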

~\Cref{minkowskianchartproposition} provides a local coordinate description of the extension near a future boundary point in which the metric tensor is ``near" the Minkowski metric and the future boundary is represented by an achronal Lipschitz graph. The achronality of this graph is a crucial feature in Sbierski's method, since it prevents past-directed timelike curves from crossing from below the graph to above it. See \Cref{fig:minkchart} for a pictorial representation of the Minkowskian chart and the graph of $f$. An important remark that follows from \Cref{minkowskianchartproposition}, and that is used in Sbierski's method of disproving $C^0$-extensions is the following.
\begin{remark}\label{minkremark}
    The achronality of the graph of $f$ implies the following for a past-directed timelike curve $\sigma:[-1,0)\to \tilde{\mathcal{M}}$. If $\sigma(-1)\in  f_<$, then $\sigma([-1,0))\subset  f_<$. 
\end{remark}

The next proposition is a key ingredient in the proof of \Cref{thmobstruction}. In turn, this theorem was used by Sbierski in \cite{Sbierski_2018} to simplify the first proof of the $C^0$-inextendibility of the maximal analytic extension of Schwarzschild spacetime presented in \cite{Sbierski_2018a}.  The following \Cref{gls2} was also used by Sbierski to refine \Cref{minkowskianchartproposition}, which will be useful again in the proof of \cite{Sbierski_2018}, as we shall see next.

\begin{proposition}\label{gls2}[Galloway-Ling-Sbierski, {\cite[Theorem 2]{Galloway_2018}}]
    Let $(\mathcal{M},g)$ be smooth (at least $C^2$) globally hyperbolic spacetime admitting a $C^0$-extension $\iota:\mathcal{M}\hookrightarrow\tilde {\mathcal{M}}$. Assume that $\partial^+\iota\left(\mathcal{M}\right)\neq \emptyset$. Then, there exists a future-directed timelike geodesic $\tau$ such that $\textrm{Im}(\tau)\subseteq \mathcal{M}$ and such that it has an endpoint in $\partial \iota\left(\mathcal{M}\right)$. 
\end{proposition}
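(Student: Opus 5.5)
This is the Galloway–Ling–Sbierski result, and I will follow their argument: build a family of timelike curves in $\mathcal{M}$ ending at a boundary point, and then extract a limit curve that is a maximising geodesic. Since $\partial^+\iota(\mathcal{M})\neq\emptyset$, I fix $\tilde p\in\partial^+\iota(\mathcal{M})$ and apply \Cref{minkowskianchartproposition} with small $\delta$. This gives a chart $\tilde\varphi:\tilde U\to\mathbb{R}_{\varepsilon_0,\varepsilon_1}$ in which $\tilde g$ is $\delta$-close to Minkowski. In this chart $f_<\subseteq\tilde\varphi(\iota(\mathcal{M})\cap\tilde U)$, and $\operatorname{graph} f$ is an achronal Lipschitz graph lying in the future boundary and passing through the origin.

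The first step produces candidate curves. I pick a point $q=\tilde\varphi^{-1}(-\varepsilon,0,\dots,0)\in f_<$, where $\varepsilon>0$ is small. For each $n$, I take a point $q_n$ on the straight timelike segment from $q$ toward the origin, with $q_n\to\tilde p$ and $q_n\in f_<$. All $q_n$ lie in $\iota(\mathcal{M})$, and since $q_n\in I^+(q)$ within $f_<$, they are timelike related to $q$ in $\mathcal{M}$. Because $(\mathcal{M},g)$ is globally hyperbolic, the Avez–Seifert theorem gives maximising future-directed timelike geodesics $\tau_n$ in $\mathcal{M}$ from $\iota^{-1}(q)$ to $\iota^{-1}(q_n)$. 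The Lorentzian lengths $L(\tau_n)$ are bounded below by the length of the chart segment, which is bounded away from zero uniformly in $n$. The lengths are also bounded above, because the time separation between $q$ and nearby points is controlled by the near-Minkowski chart. Here one needs the $\tau_n$ to stay inside the chart. I plan to get this by choosing $\varepsilon\ll\varepsilon_1$ and using the narrow light cones of the chart, together with \Cref{minkremark}, so that $\tau_n$ cannot exit through the graph.

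The second step passes to the limit. The initial unit tangents $\dot\tau_n(0)$ lie in the compact set of future timelike unit vectors at $\iota^{-1}(q)$, provided they are uniformly bounded away from the light cone. A subsequence therefore converges to some $v$, and I let $\tau$ be the $g$-geodesic in $\mathcal{M}$ with initial data $(\iota^{-1}(q),v)$, followed up to its maximal existence time $T$. Continuous dependence of geodesics on initial data in the smooth manifold $\mathcal{M}$ gives uniform convergence $\tau_n\to\tau$ on compact subintervals of $[0,T)$. Consequently $\iota\circ\tau$ stays in $\overline{f_<}$ within the chart, and it is a limit of curves ending near $\tilde p$. I expect to show that $\tau$ is future-inextendible in $\mathcal{M}$ but leaves every compact set of $\mathcal{M}$ while remaining in $\tilde U$, with finite length bounded by $\limsup L(\tau_n)$. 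Since the chart is near-Minkowski, a timelike curve of finite length in $\tilde U$ has finite coordinate extent in $x_0$ and is Lipschitz in $x_0$. It therefore has an endpoint in $\tilde U$, and since $\tau$ leaves every compact subset of $\mathcal{M}$, this endpoint lies in $\partial\iota(\mathcal{M})$.

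The main obstacle is the non-degeneracy of the limit: I must ensure $\dot\tau_n(0)$ does not degenerate to a null direction, and that $\tau$ does not reach $\iota(\mathcal{M})$-interior points that would force it to extend further. For the first issue I would use the uniform lower bound on $L(\tau_n)$ together with maximality. A near-null limit would give the $\tau_n$ arbitrarily small length over a bounded coordinate displacement, which contradicts their maximality against the explicit straight-line competitor. For the second issue I would argue by contradiction. If $\tau$ were extendible in $\mathcal{M}$ past time $T$, then geodesic existence would be open in the initial data, which contradicts the definition of $T$. Finally, finite coordinate extent inside $\tilde U$ yields the endpoint. An alternative fallback, if this direct route is messy, is to use the fact that timelike geodesic incompleteness plus global hyperbolicity yields a maximising geodesic approaching $\partial\iota(\mathcal{M})$ via the limit curve theorem in $\tilde{\mathcal M}$ for $C^0$ metrics.
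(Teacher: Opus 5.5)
\textbf{Gap: confinement of the maximisers to the chart.} Your choice of $q,q_n\in f_<$ and the lower bound on $L(\tau_n)$ are fine. The problem is the step where $\tau_n$ must stay inside $\tilde U$, and the rest of the argument depends on it: the upper bound on $L(\tau_n)$, the non-degeneracy of $\dot\tau_n(0)$, the claim that $\iota\circ\tau$ stays in $\overline{f_<}$ within the chart, and the existence of an endpoint in $\tilde U$.

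You obtain $\tau_n$ from Avez--Seifert in $\mathcal M$, so it maximises over \emph{all} causal curves of $\mathcal M$. The cone estimates of the chart and Remark~\ref{minkremark}, however, only constrain a curve while it remains in $\tilde U$. It is true that $\iota\circ\tau_n$ cannot cross $\operatorname{graph}f$, since that set is disjoint from $\iota(\mathcal M)$. But it can leave $\tilde U$ through the lateral faces $\{|x_i|=\varepsilon_1\}$ and come back, because $\iota^{-1}(\tilde U)$ need not be causally convex in $\mathcal M$. This is exactly the subtlety the paper stresses after Lemma~\ref{imageofhomotopy}.

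It genuinely happens. Take the flat cylinder $\mathbb R\times S^1$ of circumference $L$, let $\mathcal M=\{t'<0\}$, and use coordinates boosted with velocity $v$.
\begin{itemize}
\item The metric is then exactly Minkowskian, and the identification becomes translation by $(2v\varepsilon_1,2\varepsilon_1)$ with $\varepsilon_1:=L/(2\sqrt{1-v^2})$.
\item The box with this $\varepsilon_1$ and $\varepsilon_0=3\varepsilon_1$ embeds, and $f(x)=vx$.
\item For $q=(-\varepsilon,0)$ and $q_n\to0$, the straight segment from $q$ to the translate of $q_n$ lies in $\mathcal M$. Its squared length is about $4\varepsilon_1\bigl(\varepsilon-2(1-v)\varepsilon_1\bigr)$.
\item This is far larger than $\varepsilon^2$ once $(1-v)\varepsilon_1\ll\varepsilon\le\varepsilon_1$. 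But $\varepsilon^2$ bounds the squared length of every causal curve from $q$ to $q_n$ inside $\tilde U$.
\end{itemize}
So the $\mathcal M$-maximiser winds around the cylinder and leaves the chart, for any prescribed ratio $\varepsilon/\varepsilon_1$. The required smallness of $\varepsilon$ depends on the geometry outside $\tilde U$, and you give no argument that a suitable $\varepsilon$ exists in general.

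\textbf{How to repair it within your strategy.} Maximise in a globally hyperbolic open subset of $\mathcal M$ that lies inside the chart, rather than in $\mathcal M$ itself.
\begin{itemize}
\item Take $V=(a_-+C^+_{5/8})\cap(a_++C^-_{5/8})\Subset\mathbb R_{\varepsilon_0,\varepsilon_1}$, built from the \emph{wide} cones and containing $q$ and the vertical segment through the origin. Set $\mathcal M':=(\tilde\varphi\circ\iota)^{-1}(V)$.
\item In the chart, a causal curve of $\mathcal M'$ from $a$ to $b$ stays in the compact set $(a+\overline{C^+_{5/8}})\cap(b+\overline{C^-_{5/8}})\subset V$. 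Also $J^+(a,\mathcal M)\cap J^-(b,\mathcal M)$ is compact in $\mathcal M$. Together these make the causal diamonds of $\mathcal M'$ compact, so $(\mathcal M',g)$ is globally hyperbolic.
\item Avez--Seifert in $\mathcal M'$ then gives $g$-geodesics confined to $V$. Their lengths are at most $\sqrt{1+C\delta}$ times the $x_0$-extent of $V$, and your limit argument can be run there.
\end{itemize}
Two points still need proof. Non-degeneracy needs a genuine time-separation comparison in the chart, since only $C^0$-closeness to $\eta$ is available. Finiteness of $T$ follows from $T\le\lim L(\tau_n)$: otherwise $\tau_n(L(\tau_n))=\iota^{-1}(q_n)$ would converge in $\mathcal M$, forcing $\tilde p\in\iota(\mathcal M)$. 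Your sentence about openness in the initial data does not give this.

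\textbf{Comparison with the cited proof.} The paper gives no proof of its own here; it only cites Galloway--Ling--Sbierski. Their argument avoids the confinement issue by producing the maximiser directly in the extension. They work with the continuous metric $\tilde g$ on a small globally hyperbolic region of the chart and use S\"amann's existence theorem for maximising causal curves. The part of that maximiser lying in $\iota(\mathcal M)$ is locally maximising for the smooth $g$, hence a $g$-geodesic. Your fallback via ``timelike geodesic incompleteness'' is circular, since this proposition is precisely what converts extendibility into incompleteness.
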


    In the case of $C^2$-extension, the analogue of the \Cref{gls2} is straightforward to obtain as follows: Pick a point $\tilde p\in \partial^+\iota (\mathcal{M})\subset \tilde{\mathcal{M}}$. Now since $\tilde g\in C^2$, the standard ordinary differential equations theory dictates that there exists a timelike geodesic $\tilde \gamma:[0,a]\to \tilde{\mathcal{M}}$ ($a\in \mathbb{R}_{>0}$) such that $\tilde \gamma \vert_{[0,a)}\subset \iota(\mathcal{M})$---and hence $\textrm{Im}\left(\gamma\right)\subseteq \mathcal{M}$, and $\tilde \gamma (a)=\tilde p\in \tilde{\mathcal{M}}\backslash {\iota\left(\mathcal{M}\right)}$. Here $\gamma:=\iota^{-1}\left(\tilde \gamma  \vert_{[0,a)}\right)$.

Putting together the \Cref{minkowskianchartproposition} and \Cref{gls2}, we obtain the following refined statement on the existence of the Minkowskian chart, in a directly applicable form.
\begin{proposition}\label{minkowskianchartproposition2}[Updated \Cref{minkowskianchartproposition}]
    Consider a globally hyperbolic spacetime $(\mathcal{M},g)$ (with $g$ at least $C^2$) having a $C^0$-extension. Assume without loss of generality that $\partial ^+\iota(\mathcal{M})\neq \emptyset$, and let $\tilde p\in \partial^+\iota(\mathcal{M})$. Let $\tilde \varphi: \tilde U \to \mathbb{R}_{\varepsilon_0,\varepsilon_1}$ as in \Cref{minkowskianchartproposition}. Then, there exists a future-directed timelike geodesic $\tau: [-1,0)\to \mathcal{M}$ that is future-inextendible in $\mathcal{M}$, and such that
    \begin{enumerate}[(i)]
        \item $\tilde \tau:=\tilde \varphi\circ \iota \circ \tau:[-1,0)\to \mathbb{R}_{\varepsilon_0,\varepsilon_1}$ maps into $f_{<}$,
        \item $\tilde \tau$ has an endpoint in $\textrm{graph} ~f$, and 
        \item The chart $\tilde{\varphi}$ can be recentered  such that $\lim_{s\to 0}\tilde{\tau}(s)=(0,...,0)$.
    \end{enumerate}
\end{proposition}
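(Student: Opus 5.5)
The plan is to combine \Cref{minkowskianchartproposition} with \Cref{gls2}. We then adjust the resulting geodesic and recentre the chart. First fix $\tilde p\in\partial^+\iota(\mathcal M)$ and the chart $\tilde\varphi:\tilde U\to\mathbb R_{\varepsilon_0,\varepsilon_1}$ with the Lipschitz achronal graph $f$ from \Cref{minkowskianchartproposition}. \Cref{gls2} gives a future-directed timelike geodesic $\tau$ with image in $\mathcal M$ and an endpoint $\tilde q\in\partial\iota(\mathcal M)$. As stated, however, $\tilde q$ need not lie in $\tilde U$. So the first step is to localise. I would rerun the Galloway--Ling--Sbierski construction inside the chart: choose a point $\tilde x\in f_<$ close to $\tilde p$ in the timelike past, and consider timelike curves from $\tilde x$ that stay in $f_<$ and approach $\mathrm{graph}\, f$. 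Their construction maximises the Lorentzian distance within the globally hyperbolic $\mathcal M$ (restricted to a suitable diamond contained in $\iota(\mathcal M)\cap\tilde U$). This yields a future-directed timelike geodesic that starts at $\iota^{-1}(\tilde x)$ and leaves every compact subset of $\mathcal M$, and whose image under $\tilde\varphi\circ\iota$ stays in a small neighbourhood of $\tilde p$.

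Second, I will show that this geodesic, pushed into the chart as $\tilde\tau$, stays in $f_<$ and has a limit point on $\mathrm{graph}\, f$. Staying in $f_<$ follows from \Cref{minkremark}: achronality of $\mathrm{graph}\, f$ means a timelike curve cannot cross it from below and return, and the geodesic lies in $\iota(\mathcal M)$ while $\mathrm{graph}\, f\subset\partial\iota(\mathcal M)$. Existence of the limit uses the near-Minkowskian bound $|\tilde g_{\mu\nu}-\eta_{\mu\nu}|<\delta$. In the chart, the curve's $x_0$-coordinate is strictly increasing, and its spatial velocity is bounded by the slopes of the narrow light cones. Hence $\tilde\tau$ has finite coordinate length and converges to some point of $\overline{f_<}$. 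Future-inextendibility in $\mathcal M$ together with the bound $x_0<\varepsilon_0$ forces this limit to lie outside $\iota(\mathcal M)$, hence on $\mathrm{graph}\, f$. Finally, reparametrise affinely onto $[-1,0)$; this is possible because the limit is approached at finite affine parameter, since it is a boundary point reached by a timelike geodesic.

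Third, recentre: compose $\tilde\varphi$ with the translation sending $\lim_{s\to0}\tilde\tau(s)$ to the origin. After shrinking $\varepsilon_0,\varepsilon_1$ slightly, properties (i)--(iii) of \Cref{minkowskianchartproposition} still hold for the new point, now with $\tilde p$ replaced by this endpoint, which itself lies in $\partial^+\iota(\mathcal M)$. The main obstacle I expect is the first step: ensuring that the geodesic from \Cref{gls2} actually ends inside the chart rather than somewhere far away. If rerunning the local maximisation proves awkward, the alternative is to apply \Cref{gls2} first, and then take $\tilde p$ to be its endpoint $\tilde q$, building the chart of \Cref{minkowskianchartproposition} around $\tilde q$. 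This is legitimate because the statement only requires some $\tilde p\in\partial^+\iota(\mathcal M)$, and $\tilde q$ belongs to $\partial^+\iota(\mathcal M)$ because it is the endpoint of a future-directed timelike curve from $\iota(\mathcal M)$.
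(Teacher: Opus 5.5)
Your fallback route is the one the paper uses. You take the geodesic from \Cref{gls2} and apply \Cref{minkowskianchartproposition} at its endpoint $\tilde q$, so (ii) and (iii) are automatic. What you do not get for free is (i), and your Step~2 does not supply it here. \Cref{minkremark} only controls past-directed curves that already start in $f_<$. Moreover, $\tilde\varphi(\iota(\mathcal M)\cap\tilde U)$ may contain points above $\mathrm{graph}\,f$; the caption of \Cref{fig:minkchart} stresses this. So a tail of $\tilde\tau$ converging to the origin could a priori lie entirely above the graph without ever crossing it.

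The missing idea is the following. For $s$ close to $0$, the tail lies in $\tilde U$, and $\tilde\tau(s)\in I^-(0,\mathbb R_{\varepsilon_0,\varepsilon_1})$, because $\iota\circ\tau$ completed by its endpoint is a future-directed timelike curve into $\tilde q$. Then $I^-(0,\mathbb R_{\varepsilon_0,\varepsilon_1})\subseteq f_<$, since $\mathrm{graph}\,f$ is achronal and passes through $0$. Indeed, suppose $x\in I^-(0,\mathbb R_{\varepsilon_0,\varepsilon_1})$ had $x_0\ge f(\underline x)$. Then either $x$ itself, or (via the vertical timelike segment from $(f(\underline x),\underline x)$ up to $x$) the point $(f(\underline x),\underline x)$, would be a point of $\mathrm{graph}\,f$ timelike related to the origin, which is impossible. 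This gives (i) for the tail, which you then reparametrise onto $[-1,0)$.

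Relatedly, the tail has finite affine parameter not because it ends at a boundary point. It is because it lies in $\tilde U$, where $|\tilde g_{\mu\nu}-\eta_{\mu\nu}|<\delta$ bounds the proper time of every causal curve by a multiple of $\varepsilon_0$.

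Your primary route rests entirely on Step~1, which is not an argument as written. Maximising Lorentzian distance in $\mathcal M$ between points of the chart gives curves that need not stay in $\tilde U$ at all. This is exactly the phenomenon that forces future one-connectedness into \Cref{causaldiamonds same}.

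If you instead maximise inside a diamond contained in $f_<$, you must show several things:
\begin{enumerate}[(a)]
\item this region is globally hyperbolic;
\item the maximisers converge;
\item the limit is a timelike, not null, geodesic;
\item that geodesic is future-inextendible in $\mathcal M$ and ends on $\mathrm{graph}\,f$.
\end{enumerate}
That is the substance of the proof of \Cref{gls2}, so Step~1 amounts to citing a localised form of that theorem rather than proving it.

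If you cite it in that form, your Steps~2--3 are fine, with two small additions. Restrict $\tau$ to a tail lying in the shrunken box. Choose the new $\varepsilon_1$ small relative to the new $\varepsilon_0$, using the Lipschitz bound on $f$, so that the translated graph stays inside the new box.
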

\begin{figure}
    \centering
    \includegraphics[width=0.5\linewidth]{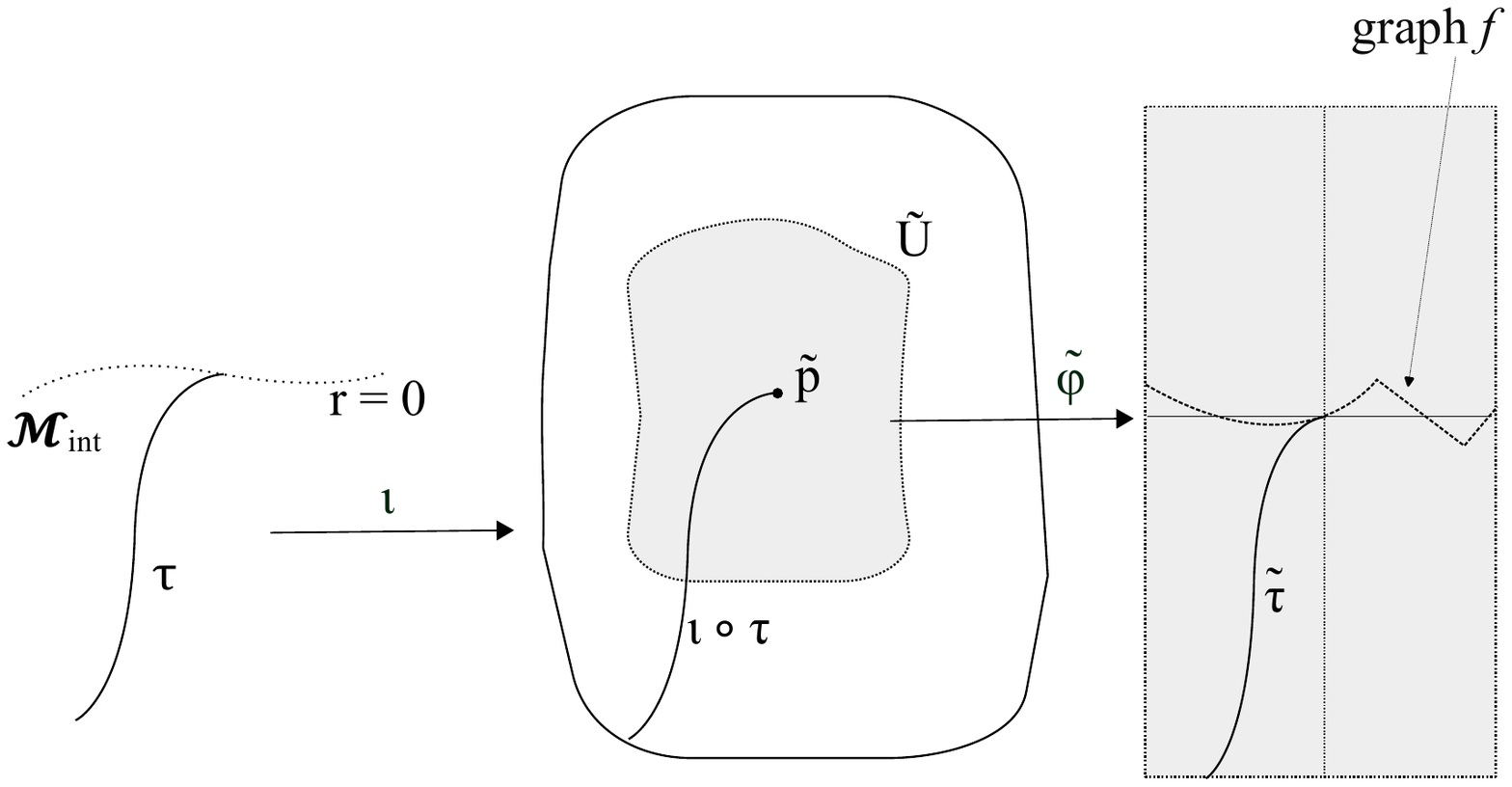}
    \caption{For a spacetime $(\mathcal{M},g)$ admitting a $C^0$-extension $(\tilde{\mathcal{M}},\tilde{g})$ (for e.g., through $r=0$), where $\tilde{g}\in C^0$, there exists a neighbourhood  in $\tilde{\mathcal{M}}$, of the boundary point $\tilde{p}$ of $\mathcal{M}$, that admits a Minkowskian chart as discussed in  \Cref{minkowskianchartproposition2}. A priori, $\tilde \tau$ can intersect with the region above the graph of  the Lipschitz continuous function $f$.} 
    \label{fig:minkchart}
\end{figure}

The next lemma will be used to prove \Cref{causaldiamonds same}, which will subsequently be an important ingredient in the proof of \Cref{maintheorem}. 

\begin{lemma}\label{imageofhomotopy}[Lemma 2.12 in \cite{Sbierski_2021}]
   Let $(\mathcal{M},g)$ be a time-oriented Lorentzian manifold with $g \in C^0$, and let
$
\iota : \mathcal{M} \hookrightarrow \tilde{\mathcal{M}}
$
be a $C^0$-extension of $\mathcal{M}$. Let
$
\gamma : [0,1] \to \mathcal{M}
$
be a future-directed timelike curve, and let $\tilde{U} \subseteq \tilde{\mathcal{M}}$ be an open set such that
$
\tilde{\gamma} := \iota \circ \gamma
$
takes values in $\tilde{U}$ and
$
J^+(\tilde{\gamma}(0),\widetilde{U}) \cap J^-(\tilde{\gamma}(1),\tilde{U})
\Subset \tilde{U}.
$
Suppose
$
\Gamma : [0,1]\times[0,1] \to \mathcal{M}
$
is a causal homotopy with fixed endpoints starting from $\gamma$, i.e., for every $r \in [0,1]$, the curve
    $
    s \mapsto \Gamma(s,r)
    $
    is future-directed causal, with
    $
    \Gamma(0,r)=\gamma(0)$,  $\Gamma(1,r)=\gamma(1),
    $ and for every $s \in [0,1]$,
    $
    \Gamma(s,0)=\gamma(s).
    $
Then the image of $\iota \circ \Gamma$ is contained entirely in $\tilde{U}$.
\end{lemma}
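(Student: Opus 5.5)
The plan is to argue by contradiction, confining everything to the compact set $K:=J^+(\tilde\gamma(0),\tilde U)\cap J^-(\tilde\gamma(1),\tilde U)$, which by hypothesis is compactly contained in $\tilde U$. Define
$$
R:=\{r\in[0,1] : \iota\circ\Gamma(s,r')\in \tilde U \ \text{for all } s\in[0,1],\ r'\in[0,r]\}.
$$
Since $\Gamma(\cdot,0)=\gamma$ and $\tilde\gamma$ takes values in $\tilde U$, we have $0\in R$. I will show that $R$ is both open and closed in $[0,1]$; connectedness of $[0,1]$ then gives $R=[0,1]$, which is the claim.

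\emph{Key observation.} If $r\in R$, the curve $s\mapsto\iota\circ\Gamma(s,r)$ has the following properties:
\begin{enumerate}[(i)]
\item it is a future-directed causal curve in $(\tilde{\mathcal M},\tilde g)$, because $\iota$ is an isometric embedding preserving time orientation;
\item it lies in $\tilde U$, since $r\in R$;
\item it runs from $\tilde\gamma(0)$ to $\tilde\gamma(1)$, by the fixed-endpoint condition.
\end{enumerate}
Hence its whole image lies in $J^+(\tilde\gamma(0),\tilde U)\cap J^-(\tilde\gamma(1),\tilde U)=K$.

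\emph{Openness.} Let $r_0\in R$. By the key observation, $\iota\circ\Gamma([0,1]\times\{r_0\})\subset K$. Since $\Gamma$ is continuous, $[0,1]$ is compact, and $\tilde U$ is an open neighbourhood of this compact image, a tube-lemma argument yields $\epsilon>0$ with $\iota\circ\Gamma([0,1]\times[r_0,r_0+\epsilon))\subset\tilde U$. Thus $[0,r_0+\epsilon)\cap[0,1]\subset R$.

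\emph{Closedness.} $R$ is an interval containing $0$, so it suffices to show that $r^*:=\sup R$ lies in $R$. For every $r<r^*$ the curve $\iota\circ\Gamma(\cdot,r)$ lies in $K$. By continuity of $\Gamma$, each point $\iota\circ\Gamma(s,r^*)$ is a limit of points of $K$, and $K$ is closed in $\tilde{\mathcal M}$ because it is compact and $\tilde{\mathcal M}$ is Hausdorff. Therefore $\iota\circ\Gamma(\cdot,r^*)$ takes values in $K\subset\tilde U$, so $r^*\in R$. Combined with openness, $R=[0,1]$.

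\emph{Main obstacle.} The subtle step is the key observation in low regularity. One must check that a future-directed causal curve in $\mathcal M$, in the sense used for $C^0$ metrics (locally Lipschitz with causal tangent almost everywhere), maps under $\iota$ to a causal curve of $\tilde g$. One must also check that "causal curve contained in $\tilde U$" is exactly the notion defining $J^\pm(\cdot,\tilde U)$. Both follow because $\iota$ is a smooth isometric embedding onto an open set, so the causal character of the tangent vectors is preserved pointwise. The remaining steps are purely point-set topology, with compactness of $K$ as the essential input for closedness.
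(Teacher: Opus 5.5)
Your open--closed continuity argument is correct and matches the ``continuity argument'' the paper points to (it cites Lemma 2.12 of Sbierski without reproducing a proof); the compactness hypothesis is used only in the closedness step. One small correction: $\Subset$ means that the closure $\overline{K}$ is compact and contained in $\tilde U$, not that $K$ itself is compact, since $K$ need not be closed in low regularity, so in the closedness step you should conclude $\iota\circ\Gamma(s,r^*)\in\overline{K}\subset\tilde U$ rather than appeal to closedness of $K$.
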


The above statement is not obvious. Indeed, if $\sigma \in \mathcal{M}$ is a causal curve from $\gamma(0)$ to $\gamma(1)$, then by definition
$
\iota \circ \sigma \subset J^+(\tilde{\gamma}(0), \tilde{\mathcal{M}}) \cap J^-(\tilde{\gamma}(1), \tilde{\mathcal{M}}),
$
but, in general $\iota \circ \sigma$ does not need to lie in
$
J^+(\tilde{\gamma}(0), \tilde{U}) \cap J^-(\tilde{\gamma}(1), \tilde{U})
$
even if it is compactly contained in $\tilde{U}$.
However, the proof is not difficult and follows from a continuity argument. The lemma remains valid even if one replaces causal homotopy by timelike homotopy, and
$
I^+(\tilde{\gamma}(0),\tilde{U}) \cap I^-(\tilde{\gamma}(1),\tilde{U}) \Subset \tilde{U}
$
in place of
$
J^+(\tilde{\gamma}(0),\tilde{U}) \cap J^-(\tilde{\gamma}(1),\tilde{U}) \Subset \tilde{U}.
$

We now show that for a subclass of general black hole spacetimes discussed in  \Cref{sec2.2}, inside the horizon and sufficiently close to $r=0$, for any future-directed timelike curve $\sigma$ parametrised by $s:=-r\in[r_1,0)$, the components $\sigma_t(s)$ and $\sigma_{y}(s)$ converge for $s\nearrow0$. The following lemma is an adaptation of Lemma 1 in \cite{Sbierski_2018}, in the context of the spacetimes described in \eqref{sssst}.

\begin{lemma}{}[Coordinate convergence along future-directed future-inextendible timelike curves parametrised by radial coordinate]\label{lemma1}
   Let $(\mathcal{M},g)$  be a warped-product black hole spacetime with the warping function $r$ (the radial coordinate) as described by \eqref{sssst} in \Cref{sec2.2}. Let $\lim_{r\to 0}m(r)=m_0>0$, and $\psi(r)$ be bounded from above and below $\forall ~r\in (0,r_0]$. Then, for a given $\epsilon>0$, there exists $\tilde{r}_0\in(0,r_0)$ such that for any future-directed timelike curve 
 $$
\sigma:(-r_0,0)\to \mathcal{M}_{\textrm{int}}~:~ \sigma(s)=(\sigma_t(s),-s,\sigma_{y}(s)),
 $$
 we have
    $$
    d_{h}(\sigma_{y}(s),\sigma_{y}(s'))<\epsilon ~\textrm{and} ~\vert\sigma_t(s)-\sigma_t(s') \vert<\epsilon ~~\forall ~-\tilde{r}_0<s,s'<0.
    $$ 
\end{lemma}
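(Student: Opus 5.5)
Since $\sigma$ is parametrised by $s=-r$, the plan is to show directly that the derivatives $\dot\sigma_t$ and $|\dot\sigma_y|_h$ are bounded by integrable functions of $r$ near $0$. These bounds must depend only on $m$ and $\psi$, not on $\sigma$. The Cauchy criterion then gives the uniform estimates at once.

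First, inside the horizon and near $r=0$, set $F(r):=\frac{2m(r)}{r}-1>0$. Because $m(r)\to m_0>0$, we have $F(r)\sim 2m_0/r$ as $r\to0$, so we may choose $r_0$ small enough that $\frac{m_0}{r}\le F(r)\le \frac{3m_0}{r}$ on $(0,r_0]$. Inside the horizon the metric reads
\[
g=e^{-2\psi}F\,dt^2-F^{-1}dr^2+r^2h .
\]
We have $\dot\sigma_r=-1$. The timelike condition $g(\dot\sigma,\dot\sigma)<0$ then becomes
\[
e^{-2\psi}F\,\dot\sigma_t^2+r^2|\dot\sigma_y|_h^2<F^{-1}.
\]

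Next, each term is bounded separately by $F^{-1}$. This gives $|\dot\sigma_t|<e^{\psi}F^{-1}\le e^{\sup\psi}\,r/m_0$, where the upper bound on $\psi$ on $(0,r_0]$ is used; this is in fact better than integrable. It also gives $|\dot\sigma_y|_h< r^{-1}F^{-1/2}\le r^{-1}\sqrt{r/m_0}=(m_0 r)^{-1/2}$. That bound is integrable near $r=0$, and it is exactly the Schwarzschild-type $r^{-1/2}$ behaviour from Lemma 1 of \cite{Sbierski_2018}.

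Finally, for $-\tilde r_0<s<s'<0$ we integrate. We get
\[
|\sigma_t(s)-\sigma_t(s')|\le \int_0^{\tilde r_0} C_1 r\,dr=\tfrac{C_1}{2}\tilde r_0^2 .
\]
The length of the curve $\sigma_y|_{[s,s']}$ in $(\mathcal F,h)$ bounds $d_h$, so
\[
d_h(\sigma_y(s),\sigma_y(s'))\le \int_0^{\tilde r_0} (m_0r)^{-1/2}dr=2\sqrt{\tilde r_0/m_0}.
\]
Choosing $\tilde r_0$ so that both right-hand sides are below $\epsilon$ finishes the proof, since this choice depends only on $m_0$, $\sup\psi$ and $r_0$.

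The only delicate step is the first: making sure that $F\gtrsim 1/r$ holds on the whole interval $(0,r_0]$, not just asymptotically. One may need to shrink $r_0$, and this is harmless because the statement only concerns $s$ near $0$. A secondary point is that the timelike-curve regularity, for instance piecewise $C^1$, is enough for the fundamental theorem of calculus and the length bound to apply.
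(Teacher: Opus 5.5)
Your proposal is correct and follows essentially the same route as the paper. You drop one nonnegative term at a time from the timelike inequality to get $|\dot\sigma_t|<e^{\psi}F^{-1}\lesssim r$ and $\|\dot\sigma_y\|_h<r^{-1}F^{-1/2}\lesssim r^{-1/2}$, with constants depending only on $m_0$ and the bound on $\psi$, and then integrate. The paper phrases the key lower bound as $2m(r)-r\ge\bar\kappa>0$ for small $r$, which is your $F\ge m_0/r$; since $r_0$ is given in the statement, it is cleaner to impose this on $(0,\tilde r_0]$ than to shrink $r_0$ itself.
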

\begin{proof}
Consider the timelike curve $\sigma$ parametrised by $s=-r$ as mentioned above. We have for all $s\in(-r_0,0)$,
\begin{align*}
   0> &-\exp\left(-2\psi(-s)\right)\left(1-\frac{2m(-s)}{(-s)}\right)\left(\dot \sigma_t(s)\right)^2+\left(1-\frac{2m(-s)}{(-s)}\right)^{-1}+s^2 h\left(\dot \sigma_y(s),\dot \sigma_y(s)\right),\\
    \left(\frac{s+2m(-s)}{(-s)}\right)^{-1}>& -\exp\left(-2\psi(-s)\right)\left(1-\frac{2m(-s)}{(-s)}\right)\left(\dot \sigma_t(s)\right)^2+s^2h\left(\dot \sigma_y(s),\dot \sigma_y(s)\right), \\
\left(\frac{s+2m(-s)}{(-s)}\right)^{-1}> &      -\exp\left(-2\psi(-s)\right)\left(1-\frac{2m(-s)}{(-s)}\right)\left(\dot \sigma_t(s)\right)^2. 
\end{align*}
Here we used the fact that $2m(-s)>-s$ since the curve is mapped to $\mathcal{M}_{\textrm{int}}$. Additionally, we used the fact that the fibre metric tensor $h$ is Riemannian.  
We know that the term $\left(2m(-s)-(-s)\right)$ is eventually bounded below by `say' $\bar\kappa\in(0,\infty)$ (we use the assumption that $\lim_{r\to0}m(r)=m_0>0$).
It then follows that
\begin{equation}\label{tbound}
\frac{(-s)\exp{\left(\Psi_0\right)}}{\bar{\kappa}} \geq\frac{(-s)\exp{\left(\psi(-s)\right)}}{2m(-s)-(-s)}>   \vert\dot \sigma_t(s)\vert.
\end{equation}
Here, $\Psi_0$ is the upper bound of $\psi$. Similarly, we have
\begin{equation}\label{ybound}
\frac{1}{\sqrt{(-s)}\sqrt{\bar{\kappa}}}>\frac{1}{\sqrt{(-s)}\sqrt{2m(-s)-(-s)}}>\vert\vert\dot \sigma_y(s)\vert\vert_{h}.  
\end{equation}

 The upper bounds of $\vert\dot \sigma_t(s)\vert$ and $\vert\vert\dot \sigma_y(s)\vert\vert_{h}$ above are integrable on $(-r_0,0)$, and the lemma then follows immediately from integration.
\end{proof}
\begin{corollary}\label{corollary1}
    With the setup $(\mathcal{M},g)$ of \Cref{lemma1}, for any given $\epsilon>0$, there exists $\tilde r_0 \in(0,r_0)$ such that for any $p:=(p_t,p_r,p_y)\in I^+\left(\sigma\left(-\tilde r_0\right),\mathcal{M}_{\textrm{int}}\right)$, we have 
    $$
    d_h\left(\sigma_y\left(-\tilde r_0\right), p_y\right)<\epsilon, ~\textrm{and} ~\vert \sigma_t(-\tilde r_0)- p_t\vert<\epsilon.
    $$
\end{corollary}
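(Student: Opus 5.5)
The plan is to reduce the statement to \Cref{lemma1}, applied not to the fixed curve $\sigma$ but to a suitable timelike curve joining $\sigma(-\tilde r_0)$ to $p$. The key point is that the threshold $\tilde r_0$ in \Cref{lemma1} depends only on $\epsilon$ and on the bounds $\bar\kappa$, $\Psi_0$. It does not depend on the particular curve, since the integrable majorants \eqref{tbound} and \eqref{ybound} are uniform over all future-directed timelike curves in $\mathcal{M}_{\textrm{int}}$ parametrised by $s=-r$. So I will fix $\tilde r_0$ from \Cref{lemma1} for the given $\epsilon$, shrinking it if necessary so that $2m(r)-r\geq\bar\kappa$ on $(0,\tilde r_0]$.

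Next, let $p\in I^+(\sigma(-\tilde r_0),\mathcal{M}_{\textrm{int}})$. Then there is a future-directed timelike curve $\gamma$ in $\mathcal{M}_{\textrm{int}}$ from $\sigma(-\tilde r_0)$ to $p$. Inside the horizon $\nabla r$ is timelike, because $g^{-1}(dr,dr)=1-2m/r<0$. Time orientation near $r=0$ is such that future-directed means $r$ decreasing, as is implicit in \Cref{lemma1}, where $s=-r$ increases to $0$. Hence $r\circ\gamma$ is strictly decreasing, and $\gamma$ can be reparametrised by $s=-r$ on $[-\tilde r_0,-p_r]$, with $p_r<\tilde r_0$.

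I then extend $\gamma$ beyond $p$ to a future-directed timelike curve defined for $s\in[-\tilde r_0,0)$. The simplest choice is to follow the integral curve of a future-directed timelike vector field, for instance $-\nabla r$ normalised, until $r\to0$. This runs into the singularity because $r$ strictly decreases along it and the interior region $\{0<r<p_r\}$ is contained in $\mathcal{M}_{\textrm{int}}$. Concatenation at $p$ gives a piecewise smooth timelike curve. Since \Cref{lemma1} only uses the pointwise inequalities and integration, piecewise $C^1$ curves are fine; alternatively the corner can be smoothed while staying timelike.

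The resulting curve $\hat\sigma$ is defined on $(-r_0,0)$ after prepending $\sigma|_{(-r_0,-\tilde r_0]}$. Applying the integrated bounds \eqref{tbound} and \eqref{ybound} on $[-\tilde r_0,-p_r]$ gives $|\hat\sigma_t(-\tilde r_0)-p_t|\leq \int_{-\tilde r_0}^{0}\frac{(-s)e^{\Psi_0}}{\bar\kappa}\,ds<\epsilon$, and similarly $d_h$ is bounded by the length $\int\|\dot{\hat\sigma}_y\|_h$, which is also less than $\epsilon$. This yields the corollary; the endpoint $s=-\tilde r_0$ itself is handled by taking non-strict limits, or by choosing $\tilde r_0$ slightly smaller.

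The only delicate point is justifying that every future-directed timelike curve from $\sigma(-\tilde r_0)$ is monotone in $r$, so that it admits the $s=-r$ parametrisation. This needs $\tilde r_0<r_H$ and the time-orientation convention in the interior, both of which I take as given in the setup.
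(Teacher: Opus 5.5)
Your proposal is correct and follows the argument the paper leaves implicit. The majorants \eqref{tbound} and \eqref{ybound} from the proof of \Cref{lemma1} are uniform over all future-directed timelike curves in $\mathcal{M}_{\textrm{int}}$ parametrised by $s=-r$, so integrating them along a timelike curve from $\sigma(-\tilde r_0)$ to $p$ gives both estimates.

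One minor slip concerns your extension step. In $\mathcal{M}_{\textrm{int}}$ one has $\nabla r=(1-2m/r)\partial_r$, which is a positive multiple of the future-directed field $-\partial_r$, so $-\nabla r$ is past-directed. Use $-\partial_r$ for the extension instead, or drop the extension entirely and integrate the bounds directly over $[-\tilde r_0,-p_r]$.
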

Next, we show that the subset $\{r<r_0\}$ (for some $0<r_0<r_H$) of the interior of the subclass of spacetimes in \eqref{sssst} containing a black hole horizon is future one-connected.  
For completeness, we recall the relevant definition (see also Definition 2.13 in \cite{Sbierski_2018a}) before proceeding.
\begin{definition}{}[Future one-connectedness]
    A time-oriented Lorentzian manifold is called future one-connected if and only if any two future-directed timelike curves with the same endpoints are homotopic through future-directed timelike curves, with endpoints fixed throughout the homotopy.
\end{definition}
    \begin{proposition}\label{prop:foc}
  Let  $(\mathcal{M},g)$ be a warped-product black hole spacetime, with the warping function $r$, as describd by \eqref{sssst} in  \Cref{sec2.2}. Let $\lim_{r\to 0}m(r)=m_0>0$, and $\sup_{r\in(0,\infty)}\vert\psi(r)\vert<\infty$. If the fibre $(\mathcal{F},h)$ has strictly positive injectivity radius, i.e. $inj(\mathcal{F},h)>0$, then there exists $0<\tilde r<r_H$ such that $\mathcal{M}_{\tilde r}:=\mathcal{M}_{\textrm{int}}\cap \{r<\tilde r\}$ is future one-connected.
\end{proposition}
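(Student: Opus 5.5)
We adapt the argument for Schwarzschild (Lemma 2.14 in \cite{Sbierski_2018a}) to the warped-product setting. Inside the horizon, $r$ is a time function: $g^{-1}(dr,dr)=1-2m(r)/r<0$ for $r<r_H$, and we fix the time orientation so that $-r$ increases along future-directed causal curves. Every future-directed timelike curve in $\mathcal{M}_{\tilde r}$ can therefore be reparametrised by $s=-r$, as in \Cref{lemma1}. Let $\gamma_0,\gamma_1$ be two future-directed timelike curves in $\mathcal{M}_{\tilde r}$ with common endpoints $p=(p_t,p_r,p_y)$ and $q=(q_t,q_r,q_y)$, where $q_r<p_r$. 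Write $\gamma_i(s)=(\gamma_{i,t}(s),-s,\gamma_{i,y}(s))$ for $s\in[-p_r,-q_r]$. The plan is to build a homotopy that interpolates the $t$- and $y$-components while keeping the $r$-parametrisation fixed, and then to check that the intermediate curves remain timelike.

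\textbf{Step 1 (smallness of the fibre displacement).} We choose $\tilde r$ small enough that, by the estimate \eqref{ybound}, every future-directed timelike curve in $\mathcal{M}_{\tilde r}$ has $h$-length in $\mathcal{F}$ at most $\int_0^{\tilde r} (r\bar\kappa)^{-1/2}\,dr=2\sqrt{\tilde r/\bar\kappa}$. We take this quantity smaller than $\tfrac12\,\mathrm{inj}(\mathcal{F},h)$, which is possible by the positivity assumption. Then for every $s$, the points $\gamma_{0,y}(s)$ and $\gamma_{1,y}(s)$ both lie in the geodesic ball of radius $\tfrac12\mathrm{inj}$ around $p_y$.

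\textbf{Step 2 (the homotopy).} Inside that ball, normal coordinates centred at $p_y$ are a diffeomorphism. We define $\Gamma_y(s,\lambda)$ as the point with normal coordinates $(1-\lambda)\exp_{p_y}^{-1}\gamma_{0,y}(s)+\lambda\exp_{p_y}^{-1}\gamma_{1,y}(s)$. We also set $\Gamma_t(s,\lambda)=(1-\lambda)\gamma_{0,t}(s)+\lambda\gamma_{1,t}(s)$. The homotopy is $\Gamma(s,\lambda)=(\Gamma_t,-s,\Gamma_y)$, which fixes the endpoints and the $r$-coordinate.

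\textbf{Step 3 (timelikeness; the main obstacle).} With $F(r):=2m(r)/r-1>0$ and $e^{-2\psi}$ bounded above and below, the timelike condition along a curve parametrised by $s$ reads
\[
e^{-2\psi}F\,\dot\sigma_t^2 - F^{-1} + s^2\|\dot\sigma_y\|_h^2<0 .
\]
Here $e^{-2\psi}F\dot\sigma_t^2$ and $s^2\|\dot\sigma_y\|^2_h$ are convex in the velocity. The $r$-term $-F^{-1}$ is fixed and $\lambda$-independent, so in the $t$-direction convexity of the square immediately preserves the inequality. The difficulty is the fibre term: in normal coordinates $h$ is not the Euclidean metric. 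Its comparison constant with the flat metric can, however, be made as close to $1$ as desired by shrinking the ball radius $\rho$. This works uniformly in the base point by homogeneity, or simply by compactness of $\mathcal{F}$. The differential of the map $(v_0,v_1)\mapsto\exp((1-\lambda)v_0+\lambda v_1)$ composed with the inverse charts is then close to convex interpolation. From this we expect
\[
\|\dot\Gamma_y\|_h^2\le (1+c(\rho))\bigl((1-\lambda)\|\dot\gamma_{0,y}\|_h^2+\lambda\|\dot\gamma_{1,y}\|_h^2\bigr).
\]
This error factor is not harmful, because the fibre term carries the factor $s^2$, and by \eqref{ybound} we have $s^2\|\dot\sigma_y\|^2\lesssim |s|/\bar\kappa$. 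Meanwhile, $F^{-1}\approx |s|/(2m_0)$ is of the same order with a fixed margin. To exploit this margin robustly, we first perturb each $\gamma_i$ slightly within its timelike homotopy class so that the strict inequality holds with a quantitative gap, and then let the distortion factor $1+c(\rho)$ be absorbed. If this absorption fails near the endpoints, the fallback is to first homotope each $\gamma_i$ to the curve obtained by concatenating it with a fixed reference curve. Combining the two estimates by convexity then yields a continuous family of future-directed timelike curves with fixed endpoints, which proves future one-connectedness of $\mathcal{M}_{\tilde r}$.
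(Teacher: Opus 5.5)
\textbf{There is a genuine gap in Step 3.} Your Steps 1 and 2 are fine, and your bound $\|\dot\Gamma_y\|_h^2\le(1+c(\rho))\bigl((1-\lambda)\|\dot\gamma_{0,y}\|_h^2+\lambda\|\dot\gamma_{1,y}\|_h^2\bigr)$ is correct. The problem is that the factor $1+c(\rho)$ cannot be absorbed, and nothing in the proposal replaces this step.

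There is no ``fixed margin'' to absorb it into. The estimate \eqref{ybound} is merely a consequence of $s^2\|\dot\sigma_y\|_h^2<F^{-1}$. A timelike curve may have $e^{-2\psi}F\dot\sigma_t^2+s^2\|\dot\sigma_y\|_h^2$ as close to $F^{-1}$ as you like, with the fibre term dominating. Shrinking $\tilde r$ does not help either: for every $\tilde r$ there are curves in $\mathcal M_{\tilde r}$ whose fibre components reach normal radius $\rho$ of order $\sqrt{\tilde r}$ while their timelike margin is arbitrarily small.

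The failure is real, and not confined to the endpoints, already for the unit round $S^2$, i.e.\ for Schwarzschild. Take $\gamma_{0,t}=\gamma_{1,t}$. Let $\gamma_{1,y}$ be the mirror image of $\gamma_{0,y}$ under reflection in a geodesic through $p_y$ and $q_y$. This reflection is an isometry, so $\gamma_1$ is timelike and has the same endpoints. Arrange that on a middle stretch the normal coordinates of $\gamma_{i,y}$ are $(\pm\rho,a(s))$ with $|a|\ll\rho$.
\begin{itemize}
\item At $\lambda=\tfrac12$ the interpolated fibre point is $(0,a(s))$. It moves radially with the same coordinate velocity.
\item Hence its $h$-speed exceeds that of $\gamma_{i,y}$ by a factor of about $\rho/\sin\rho$. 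There is no convexity gain, since the coordinate velocities coincide.
\item If the $\gamma_i$ are null there up to a relative error below roughly $\rho^2/3$, then $\Gamma(\cdot,\tfrac12)$ is spacelike.
\end{itemize}

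Neither proposed rescue works. A $C^1$-small perturbation of $\gamma_i$ changes the margin only slightly, so it cannot create a margin of size $c(\rho)\,s^2\|\dot\gamma_{i,y}\|_h^2$; producing one is itself a timelike-homotopy problem. Concatenating with a reference curve changes the endpoints, so that fallback is not a construction. Separately, making $c(\rho)$ uniform via compactness or homogeneity uses hypotheses the proposition does not make; only $\mathrm{inj}(\mathcal F,h)>0$ is assumed.

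\textbf{The missing idea} is to remove the comparison with the flat metric rather than estimate it, which is what the paper does.

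First deform each $\gamma_i$, keeping its $t$- and $r$-components, so that its fibre component becomes a reparametrised radial geodesic from $p_y$. (The paper centres at the other endpoint; this is immaterial.) Concretely, for $s\le\alpha$ replace $\gamma_{i,y}(s)$ by $\exp_{p_y}\bigl[f_i(\alpha,s)\exp_{p_y}^{-1}(\gamma_{i,y}(\alpha))\bigr]$, and keep $\gamma_i$ for $s\ge\alpha$. Here $f_i(\alpha,\cdot)$ is the $h$-arclength of $\gamma_{i,y}$ from $-p_r$, normalised to equal $1$ at $\alpha$. By the Gauss lemma the new fibre speed is $\|\dot\gamma_{i,y}(s)\|_h$ times $d_h\bigl(p_y,\gamma_{i,y}(\alpha)\bigr)/L_h\bigl(\gamma_{i,y}|_{[-p_r,\alpha]}\bigr)\le 1$. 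So every intermediate curve is timelike, with no error term.

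At $\alpha=-q_r$ both curves have fibre components on the same minimising segment from $p_y$ to $q_y$. Parametrise it by arclength $s_1$. Both curves then lie in the three-dimensional slice $\mathbb R\times(0,r_H)\times I$, where the fibre part of the metric is exactly $r^2\,ds_1^2$. There the linear interpolation of $(t,s_1)$ is timelike by exactly the convexity argument you used for the $t$-term. This needs only the normal ball, hence only $\mathrm{inj}(\mathcal F,h)>0$, and your Step 1 is what keeps the fibre components inside it.
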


\begin{proof}
    Let us consider a timelike separated pair of points $(t_a,r_a, y_a)$ and $(t_b,r_b, y_b)\in \mathcal{M}_{\tilde r}$ such that $(t_a,r_a, y_a)\ll (t_b,r_b, y_b)$.  Consider two past directed timelike curves $\gamma_i: [r_b,r_a]\to \mathcal{M}_{\tilde r}$ ($i\in\{1,2\}$) given by $\gamma_i(r) =((\gamma_i)_t (r),r,(\gamma_i)_y (r))$ with $\gamma_i(r_a) = (t_a,r_a,y_a)$ and $\gamma_i(r_b)=(t_b,r_b,y_b)$. In the following, we will show that there exists a timelike homotopy with fixed endpoints $(t_a,r_a, y_a)$ and $(t_b,r_b, y_b)$ between $\gamma_1$ and $\gamma_2$.

    Choose $\varepsilon\in(0,\textrm{inj}(\mathcal F,h))$. By \Cref{lemma1}, after decreasing $\tilde r>0$ if necessary, one can obtain
$$
d_h\big((\gamma_i)_y(r),y_b\big)<\varepsilon<\operatorname{inj}(F,h)
\qquad\text{for all }r\in[r_b,r_a].
$$
Thus $(\gamma_i)_y$ lies in the normal ball centred at $y_b$, and $\exp_{y_b}^{-1}$ is well-defined on this ball. Let 
    $
    \operatorname{exp}_{y_b}: T_{y_b} \mathcal{F}\supseteq B_{\textrm{inj}\left(\mathcal{F},h\right)}\left(0\right) \to \mathcal{F}
    $ 
    be the exponential map with base point $y_b$. We define two timelike homotopies $\Gamma_i :[r_b,r_a]\times[r_b,r_a] \to \mathcal{M}_{\tilde r}$ $(i\in\{1,2\})$: (i) between $\gamma_1$ and a timelike curve $\sigma_1$, which has the same $t$ and $r$ coordinates as that of $\gamma_1$, but whose projection in $\mathcal{F}$ is a geodesic in $\mathcal{F}$, and (ii) between $\gamma_2$ and a timelike curve  $\sigma_2$, which has the same $t$ and $r$ coordinates as that of $\gamma_2$, but whose projection in $\mathcal{F}$ is a geodesic in $\mathcal{F}$ (see \Cref{fig:foc}). We define $\Gamma_i$ as follows:
\begin{equation}\label{homotopy-1}
    \Gamma_i (\alpha,r)\coloneqq \begin{cases}
((\gamma_i)_t(r),r,\operatorname{exp}_{y_b}[f_i(\alpha,r)\operatorname{exp}^{-1}_{y_b}((\gamma_i)_{y}(\alpha))] & \text{if } r_b \leq r\leq \alpha,\\
\gamma_i (r)  & \text{if } \alpha \leq r \leq r_a
\end{cases}
\end{equation}
where,
\begin{equation}\label{homotopy-weight}
    f_i(\alpha,r)\coloneqq \frac{\displaystyle\int^r_{r_b} \| {\dot{(\gamma_i)}_y}(r')\|_{h} \ dr'}{\displaystyle\int^\alpha_{r_b} \| {\dot{(\gamma_i)}_y}(r')\|_{h} \ dr'}.
\end{equation}
$f_i(\alpha,r)\in [0,1]~\forall~r\in[r_b,\alpha]$, is monotone increasing, takes value $0$ for $r=r_b$, and takes value $1$ for $r=\alpha$. 
We then ensure that each such curve $\Gamma_i(\alpha,.)$ is timelike for all $r\in[r_b,r_a]$ by computing
\begin{equation}
\begin{aligned}
g(\partial_r \Gamma_i(\alpha,r),\partial_r \Gamma_i(\alpha,r)) &= -\exp\left(-2\psi(r)\right)\left(1-\frac{2m(r)}{r}\right)(\dot{(\gamma_i)}_t(r))^2 + \left(1-\frac{2m(r)}{r}\right)^{-1}\\
     & \ \ \  +r^2 \vert\partial_rf_i\left(\alpha,r\right)\vert^{2} \|\exp^{-1}_{y_b}\left(\left(\gamma_i\right)_{y}\left(\alpha\right)\right)\|^2_{h}\\
    &= -\exp\left(-2\psi(r)\right)\left(1-\frac{2m(r)}{r}\right)(\dot{(\gamma_i)}_t(r))^2 + \left(1-\frac{2m(r)}{r}\right)^{-1}\\
    & \ \ \  +r^2 \|\dot{(\gamma_i)}_y (r) \|^2_{h} \frac{d^2_{h}((\gamma_i)_y(\alpha)),(\gamma_i)_y(r_b))}{\left(\displaystyle \int_{r_b}^{\alpha} \|\dot{(\gamma_i)}_y (r') \|_{h}\ dr'\right)^2}\\
    &\leq g(\dot\gamma_i(r),\dot\gamma_i(r)) <0.
\end{aligned} 
\end{equation}
Here we have used the inequality
$$
\left(\displaystyle \int_{r_b}^{\alpha} \|\dot{(\gamma_i)}_y (r') \|_{h}\ dr'\right)^2\geq d^2_{h}((\gamma_i)_y(\alpha)),(\gamma_i)_y(r_b)).
$$
Moreover, for $\alpha \leq r \leq r_a$, $\Gamma_i$ is simply $\gamma_i$, and hence is timelike by definition. Therefore, $\Gamma_i$ is a timelike homotopy with fixed endpoints between $\gamma_i$ and a timelike curve $\sigma_i$ whose projection on $(\mathcal{F},h)$ lies on the geodesic arc connecting $y_a$ and $y_b$. We construct local coordinates $(s^1,..,s^{d-1})$ on $(\mathcal{F},h)$ such that the projections of $\sigma_i$ \footnote{The images of these projections overlap in $\mathcal{F}$.} ($i\in\{1,2\}$) on $\mathcal{F}$ between $y_a$ and $y_b\in \mathcal{F}$ is parameterised by $s^1 \in [0,1]$. Hence $\sigma_i$ maps into a three-dimensional submanifold (two base dimensions plus one fibre dimension) $\mathcal{N}:=\mathbb{R}\times (0,r_H)\times I \subset \mathcal{M}_{int}$, with the metric tensor
$$
g_{\mathcal{N}} = -\exp\left(-2\psi(r)\right)\left(1-\frac{2m(r)}{r}\right)dt^2+\left(1-\frac{2m(r)}{r}\right)^{-1}dr^2+r^2 ds_1^2,
$$
induced from the metric tensor $g$ (see \eqref{sssst}) of the ambient spacetime. Here, $I$ is an open interval containing $[0,1]$. In these coordinates, we have $\sigma_i(r):[r_b,r_a]\to N$ $\sigma_i(r) = ((\sigma_i)_t(r),r,(\sigma_i)_{s_1}(r))$. We define a timelike homotopy $\Gamma:[0,1]\times[r_b,r_a] \to N$ between $\sigma_1$ and $\sigma_2$ by,
\begin{equation}\label{homotopy-2}
    \Gamma(\alpha,r)\coloneqq ((1-\alpha)(\sigma_1)_t (r)+\alpha (\sigma_2)_t(r),r,(1-\alpha)(\sigma_1)_{s_1} (r)+\alpha (\sigma_2)_{s_1}(r)).
\end{equation}
We then ensure that each such curve $\Gamma(\alpha,.)$ is timelike for all $r\in[r_b,r_a]$ by computing
\begin{equation}\label{g_N}
    \begin{aligned}
        g_{\mathcal{N}} (\partial_r \Gamma(\alpha,r),\partial_r \Gamma(\alpha,r)) &= -\exp\left(-2\psi(r)\right)\left(1-\frac{2m(r)}{r}\right)[(1-\alpha)\dot{(\sigma_1)}_t(r)+\alpha \dot{(\sigma_2)}_t(r)]^2\\
        & \ \ \  + \left(1-\frac{2m(r)}{r}\right)^{-1} + r^2 [(1-\alpha)\dot{(\sigma_1)}_{s_1}(r)+\alpha \dot{(\sigma_2)}_{s_1}(r)]^2\\
        & \leq -\exp\left(-2\psi(r)\right)\left(1-\frac{2m(r)}{r}\right)(1-\alpha)[\dot{(\sigma_1)}_t(r)]^2+\alpha [\dot{(\sigma_2)}_t(r)]^2\\
        & \ \ \  + \left(1-\frac{2m(r)}{r}\right)^{-1} + r^2 (1-\alpha)[\dot{(\sigma_1)}_{s_1}(r)]^2+\alpha [\dot{(\sigma_2)}_{s_1}(r)]^2\\
        & =(1-\alpha)\times g_{\mathcal{N}}\left(\dot \sigma_1 (r), \dot \sigma_1(r)\right)+\alpha \times g_{\mathcal{N}}\left(\dot \sigma_2 (r), \dot \sigma_2(r)\right)<0.
    \end{aligned}
\end{equation}
Therefore, $\Gamma$ is a timelike homotopy with fixed endpoints $(t_a,r_a, y_a)$ and $(t_b,r_b, y_b)$ between $\sigma_1$ and $\sigma_2$. The concatenation of $\Gamma_1$, $\Gamma$, and $\Gamma_2$ gives a timelike homotopy between $\gamma_1$ and $\gamma_2$. Hence $\mathcal{M}_{int}\cap\{r<\tilde r\}$ is future one-connected.
\end{proof}

\begin{figure}
    \centering
    \includegraphics[width=0.5\linewidth]{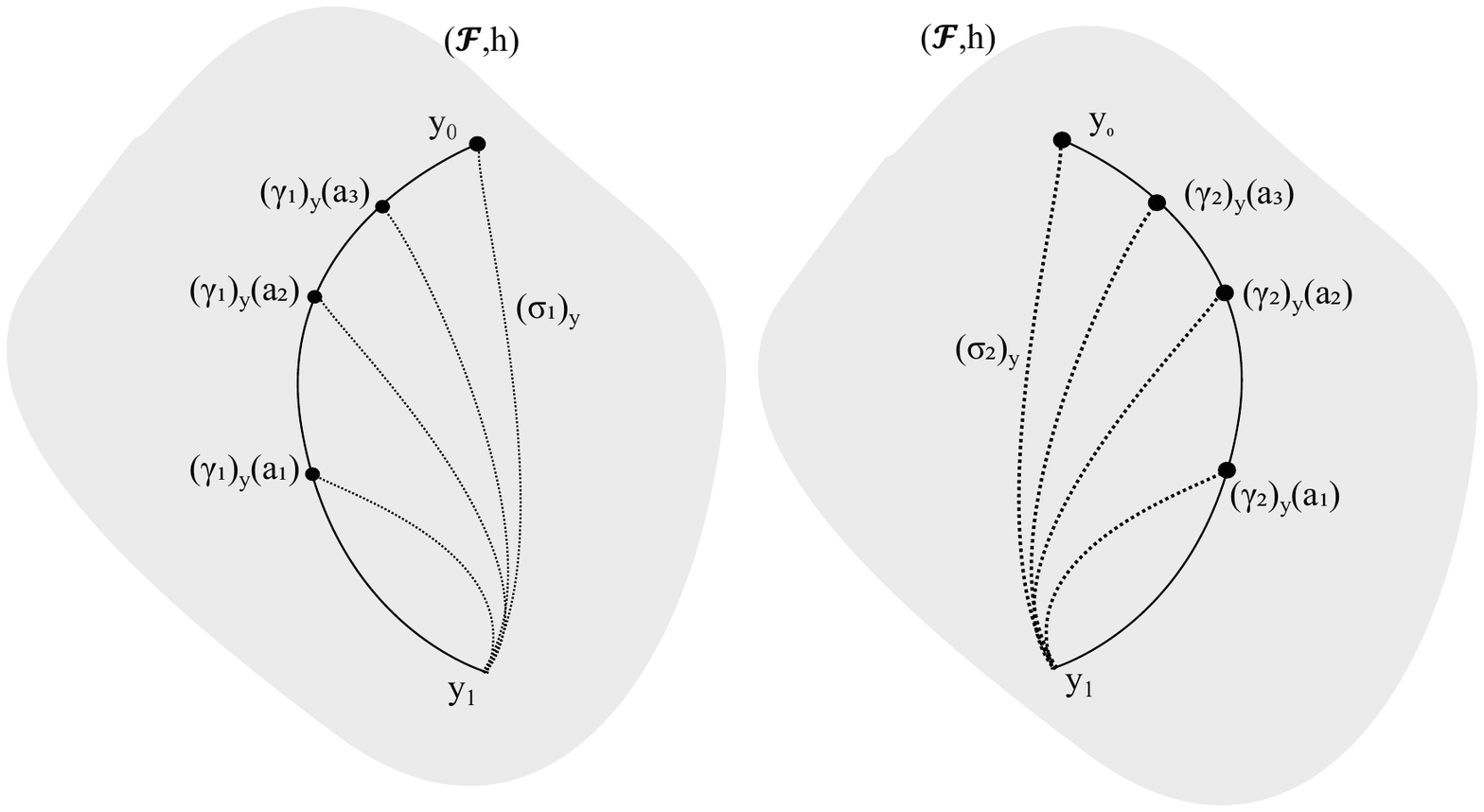}
    \caption{Homotopies between the fibre-component of $\gamma_i$ and the geodesic $\left(\sigma_i\right)_{y}$ in $\mathcal{F}$. Here $i\in\{1,2\}$. The lift of these homotopies to the spacetime are timelike homotopies between $\gamma_i$ and $\sigma_i$. Additionally, $\textrm{Im}\left(\left(\sigma_1\right)_y\right)=\textrm{Im}\left(\left(\sigma_2\right)_y\right)$ in $\mathcal{F}$.}
    \label{fig:foc}
\end{figure}

Next, we establish another property of the class of spacetimes described by \eqref{sssst} in \Cref{sec2.2} in the limit as $r \to 0$. In particular, we identify a subclass exhibiting the phenomenon commonly referred to as ``spaghettification.'' The following lemma is adapted from Step 3 in the proof of Theorem 1 of \cite{Sbierski_2018} to the class of spacetimes under consideration, and provides an obstruction to its $C^0$-extension. More precisely, assuming the existence of a $C^0$-extension through the central singularity will lead to a conclusion that contradicts \Cref{obstruction}. It therefore follows that no such $C^0$-extension through the central singularity can exist.
\begin{lemma}\label{obstruction}
    Let $\Sigma_n\coloneqq \{r=1/n\}$ be a family of Cauchy hypersurfaces 
    \footnote{In $\mathcal M_{\mathrm{int}}$, $-\partial_r$ is future-directed timelike. Thus, for every nonzero future-directed causal vector $X$, $g(X,\partial_r)
=-\left(1-2m(r)/r\right)^{-1}X(r)<0,$
so $X(r)<0$. Hence $r$ decreases strictly along every future-directed causal curve $\gamma$. If such a curve is inextendible, then $r\circ\gamma\to0$, since a positive limiting radius would permit extension within the smooth interior. Therefore, every inextendible causal curve intersects each $\{r=r^*\}$, $r^*<r_H$, exactly once; these hypersurfaces are Cauchy.}
of the interior of the black hole spacetimes described in  \Cref{sec2.2}, and $\bar{g}_n$ be its induced Riemannian metric tensor. Let $\gamma:[-1,0)\to \mathcal{M}_{\textrm{int}}$ be a future-directed and future-inextendible timelike curve. If the metric coefficients satisfy conditions $m_0=\lim_{r\to 0}m(r)>0$, and $\sup_{r\in(0,r_H]}\vert\psi(r)\vert<\infty$, then for any given $\mu\in(0,1]$,  there exists a sequence of pair of points $p_n,q_n \in I^{+}(\gamma(-\mu), \mathcal{M}_{\mathrm{int}})\cap \Sigma_n$ such that the $\bar{g}_n$-distance between them diverges as $n\to \infty$.
\end{lemma}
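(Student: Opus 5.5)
The plan is to exploit the $\partial_t$-direction, following Step 3 in Sbierski's proof. On $\Sigma_n=\{r=1/n\}$ the induced metric is
\[
\bar g_n=\exp(-2\psi(1/n))\left(\frac{2m(1/n)}{1/n}-1\right)dt^2+\frac{1}{n^2}\,h .
\]
Since $\psi$ is bounded and $m(1/n)\to m_0>0$, the $dt^2$ coefficient behaves like $2m_0 e^{-2\psi}n\to\infty$. More precisely, it is bounded below by $c\,n$ for some $c>0$ and all large $n$.

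So it suffices to find, for each large $n$, two points $p_n,q_n\in I^+(\gamma(-\mu),\mathcal M_{\mathrm{int}})\cap\Sigma_n$ with the same fibre coordinate $y$ and with $t$-coordinates differing by a fixed $\Delta t>0$ independent of $n$. Any curve in $\Sigma_n$ joining them has length at least $\int|\dot t|\sqrt{cn}\ge\sqrt{cn}\,\Delta t\to\infty$, because the $h$-part of $\bar g_n$ is nonnegative and can be dropped. Hence $d_{\bar g_n}(p_n,q_n)\ge \sqrt{cn}\,\Delta t$, which diverges.

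To produce the points, we use the $t$-translation invariance together with an explicit timelike curve family. Fix $p=\gamma(-\mu)=(t_0,r_0,y_0)$. Consider the curves $r\mapsto(t_0+\lambda(r_0-r),\,r,\,y_0)$ for $r\in(0,r_0]$ and fixed slope $\lambda\in[-1,1]$. Their tangent $-\partial_r-\lambda\partial_t$ has squared norm
\[
-e^{-2\psi}\left(1-\tfrac{2m}{r}\right)\lambda^2+\left(1-\tfrac{2m}{r}\right)^{-1}.
\]
The first term is $\geq 0$ inside the horizon, so this is not automatically negative. Instead, we use the fact that inside the horizon $-\partial_r$ is itself timelike: $g(\partial_r,\partial_r)=(1-2m/r)^{-1}<0$. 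We therefore pick a small slope $\lambda$ with $\lambda^2<\inf_{r\le r_0}e^{2\psi}(2m/r-1)^{-2}$.

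This infimum is positive on $(0,r_0]$ precisely when $2m/r-1$ is bounded above, and that fails as $r\to0$. This is the main obstacle: near $r=0$ the admissible slope shrinks like $r$. The saving point is the bound \eqref{tbound} from \Cref{lemma1}. There the total $t$-displacement $\int|\dot\sigma_t|\,dr$ is finite, but it is only available near $r_0$, not in the limit $r\to0$. So we should generate the $t$-separation at a fixed radius, and only afterwards go down to $r\to0$.

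Concretely, I would proceed as follows.
\begin{enumerate}[(i)]
\item Choose $r_1<r_0$ and two points $a_\pm=(t_0\pm\delta,r_1,y_0)$. They lie in $I^+(p)$ by the slope construction on $[r_1,r_0]$, where $2m/r-1$ is bounded.
\item From each $a_\pm$ follow the curve $r\mapsto(t_0\pm\delta,r,y_0)$, i.e. the integral curve of $-\partial_r$, which is timelike, down to $r=1/n$.
\item This gives $p_n=(t_0+\delta,1/n,y_0)$ and $q_n=(t_0-\delta,1/n,y_0)$, both in $I^+(\gamma(-\mu),\mathcal M_{\mathrm{int}})\cap\Sigma_n$, with $\Delta t=2\delta$.
\end{enumerate}
Combined with the length estimate above, this gives $d_{\bar g_n}(p_n,q_n)\ge 2\delta\sqrt{cn}\to\infty$.
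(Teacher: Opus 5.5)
Your argument is correct and is essentially the paper's. The paper obtains the fixed-radius segment $(t_0-\lambda,t_0+\lambda)\times\{r_0\}\times\{y_0\}\subseteq I^+(\gamma(-\mu),\mathcal M_{\mathrm{int}})$ from openness of $I^+$ around $\gamma(-\mu/2)$ rather than from your explicit small-slope curves, then flows along $-\partial_r$ to $\Sigma_n$ and uses the divergence of the $dt^2$-coefficient exactly as you do. Your lower bound by dropping the $h$-part is, if anything, cleaner than the paper's assertion that the $t$-line is the shortest curve. The digression invoking \eqref{tbound} plays no role in your steps (i)--(iii) and can be deleted; note also that this bound holds down to $r=0$, not only near $r_0$.
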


\begin{proof}
   After fixing a $-\mu\in[-1,0)$, let $(t_0,r_0,y_0)$ be the coordinates of $\gamma\left(-\mu/2\right)$. We know that $I^+\left(\gamma\left(-\mu\right)\right)$ is open, hence there exists a $\lambda\in \mathbb{R}^+$ such that 
   $$
   (t_0-\lambda,t_0+\lambda)\times \{r_0\}\times \{y_0\}\subseteq I^+\left(\gamma\left(-\mu\right),\mathcal{M}_{int}\right).
   $$ 
   Additionally, $-\partial_r$ is future-directed and timelike vector field in $(\mathcal{M}_{\textrm{int}},g_{\textrm{int}})$. Therefore, we have,
    $$
    [t_0-\lambda,t_0+\lambda]\times (0,r_0)\times \{y_0\}\subseteq I^+\left(\gamma\left(-\mu\right),\mathcal{M}_{int}\right).
    $$
    Now, construct a sequence of pairs of points
    $
    p_n:=(t_0-\lambda,\frac{1}{n}, y_0) \quad \textrm{and} \quad q_n:=(t_0+\lambda,\frac{1}{n}, y_0).
    $
    Eventually, $p_n,~q_n\in I^+\left(\gamma\left(-\mu\right),\mathcal{M}_{int}\right)$ (see \Cref{fig:distanceblowup}) The induced metric tensor $\bar{g}_n$ on $\Sigma_n$ is given by
    $$
    \bar{g}_n=-\exp{\left(-2\psi\left(\frac{1}{n}\right)\right)}\left(1-2m\left(\frac{1}{n}\right)n\right)dt\otimes dt+n^{-2}h.
    $$
    \begin{figure}[h]
    \centering
    \includegraphics[width=0.5\linewidth]{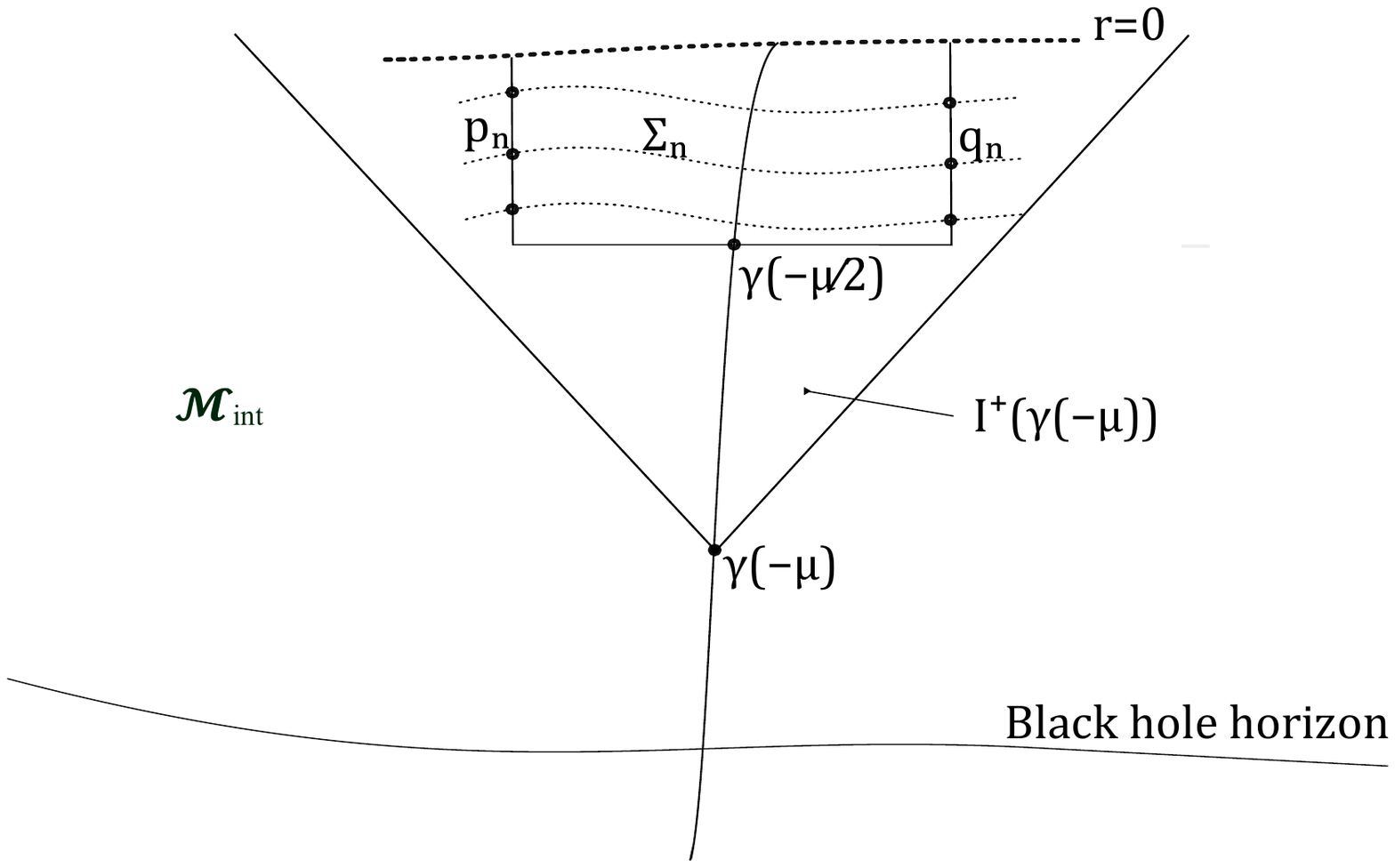}
    \caption{A sequence of points $(p_n,q_n)$ lying on the sequence of Cauchy surfaces $\Sigma_n:=\left\{1/n\right\}$. The distance $d_{\bar g_n}\left(p_n,q_n\right)$ blows to infinity as $n\to \infty$, or as $r\to 0$. }
    \label{fig:distanceblowup}
\end{figure}
    The shortest curve between $p_n$ and $q_n$ is given by $\gamma_n:(-\lambda,\lambda)\to\Sigma_n$, where 
    $
    \gamma_n:=(t+s,\frac{1}{n},y_0).
    $
    The $\bar{g}_n$-length of this curve, which is also the $\bar{g}_n$-distance between points $p_n$ and $q_n$ can then be calculated as 
    \begin{align*}
   d_{\bar{g}_n}\left(p_n,q_n\right)= L_{\bar{g}_n}(\gamma_n) &:=\int^{\lambda}_{-\lambda}\sqrt{\bar{g}_n\left(\dot \gamma_n(s),\dot \gamma_n(s)\right)} ~ds\\
    &= \int^{\lambda}
_{-\lambda}\sqrt{-\exp{\left(-2\psi\left(\frac{1}{n}\right)\right)\left(1-2m\left(\frac{1}{n}\right)n\right)}}~ds\\
&= 2\lambda\sqrt{\exp{\left(-2\psi\left(\frac{1}{n}\right)\right)\left(2m\left(\frac{1}{n}\right)n-1\right)}}. 
\end{align*}    
This distance $d_{\bar{g}_n}\left(p_n,q_n\right)$ diverges as $n\to \infty$ (or as $r\to 0$), as can be seen in the following (use conditions $m_0=\lim_{r\to 0}m(r)>0$, and $\sup_{r\in(0,r_H]}\vert\psi(r)\vert<\infty$)
\begin{align*}
    \lim_{n\to\infty}d_{\bar{g}_n}(p_n,q_n)&=\lim_{n\to \infty}2\lambda\sqrt{\exp{\left(-2\psi\left(\frac{1}{n}\right)\right)}\left(2m\left(\frac{1}{n}\right)n-1\right)}\\
    &=\lim_{r\to 0}2\lambda\sqrt{\exp{\left(-2\psi\left(r\right)\right)}\left(\frac{2m\left(r\right)}{r}-1\right)}=\infty.
\end{align*}
    \end{proof}

In \cite{Sbierski_2018}, Sbierski uses the spherical symmetry of Schwarzschild spacetime at several parts in the proof. Here, we show that a symmetry weaker than full spherical symmetry is already sufficient for the argument to go through. The following lemma identifies the relevant symmetry, namely a symmetry of the fibre component of the spacetime. This symmetry will play a key role in the proof of \Cref{Wtls2}, where, under the assumption that a $C^0$-extension through the interior (or equivalently through the central singularity) exists, one shows that a certain set timelike-separates the future of a point on a specific future-directed, future-inextendible timelike curve from a point lying to its past on the same curve. 
\begin{lemma}\label{lemmaofisometry}
Let $(\mathcal{F},h)$ be a compact connected Riemannian manifold, and let $G \subseteq \textrm{Isom}(\mathcal{F},h)$ be a Lie subgroup acting transitively on $\mathcal{F}$. Then for every $\delta>0$ there exists $\epsilon>0$ such that for any $p,q\in \mathcal{F}$,
$$
d_h(p,q)<\epsilon \implies~\exists~f\in G~:~f(p)=q,~\textrm{and}~\sup_{y\in F} d_h\bigl(f(y),y\bigr)<\delta. 
$$
\end{lemma}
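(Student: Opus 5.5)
The plan is to combine compactness of $G$ with a uniform continuity argument for the orbit map. Since $\mathcal F$ is compact, $\mathrm{Isom}(\mathcal F,h)$ is a compact Lie group (Myers--Steenrod), and it carries the compact-open topology, which on isometries of a compact space coincides with the topology of uniform convergence. If $G$ is not closed, I would replace it by its closure $\bar G$. This is harmless for transitivity but changes the conclusion, so I will instead argue directly with $G$ via the Lie group structure. Concretely, fix a point $o\in\mathcal F$ and consider the orbit map $\pi:G\to\mathcal F$, $\pi(f)=f(o)$. Since $G$ acts transitively and smoothly, $\pi$ is a surjective submersion; this is the standard fact that $G/G_o\cong\mathcal F$ when $G$ has countably many components, and one can reduce to the identity component $G_0$, which still acts transitively because its orbits are open and $\mathcal F$ is connected. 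In particular $\pi$ is an open map, and it admits smooth local sections.

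\textbf{Step 1 (small elements act uniformly small).} Let $d_\infty(f,\mathrm{id}):=\sup_{y\in\mathcal F}d_h(f(y),y)$. The action map $G\times\mathcal F\to\mathcal F$ is continuous and $\mathcal F$ is compact, so $f\mapsto d_\infty(f,\mathrm{id})$ is continuous on $G$ and vanishes at $e$. Hence, given $\delta>0$, there is an open neighbourhood $V\ni e$ in $G$ with $d_\infty(f,\mathrm{id})<\delta$ for all $f\in V$.

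\textbf{Step 2 (uniform $\epsilon$ via homogeneity).} For each $p\in\mathcal F$, the orbit map $\pi_p(f)=f(p)$ is open, so $\pi_p(V)$ is an open neighbourhood of $p$. To make the radius uniform in $p$, I would use a symmetric neighbourhood $W$ with $W^{-1}W\subseteq V$ and cover $\mathcal F$ by the open sets $\pi_p(W)$. Let $\epsilon$ be a Lebesgue number of this cover. If $d_h(p,q)<\epsilon$, then $p,q\in\pi_{p_i}(W)$ for some $i$. Write $p=a(p_i)$ and $q=b(p_i)$ with $a,b\in W$, and set $f:=ba^{-1}$. Then $f(p)=q$, and $f\in WW^{-1}$, so choosing $W$ with $WW^{-1}\subseteq V$ gives $\sup_y d_h(f(y),y)<\delta$.

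Alternatively, one could avoid the Lebesgue number by conjugation: $\pi_p(V)=g\,\pi_o(g^{-1}Vg)$ for $p=g(o)$, but the conjugates $g^{-1}Vg$ are not uniformly controlled when $G$ is noncompact, so the cover argument is cleaner. The main obstacle is justifying openness of the orbit map for a possibly nonclosed Lie subgroup $G$. I would cite the standard result that a transitive smooth action of a Lie group with countably many components on a connected manifold has open orbit maps, via Baire category or Sard: some $\pi_p(W)$ has nonempty interior, and translating by elements of $W$ then yields openness.
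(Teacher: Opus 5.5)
Your proof is correct, but it gets the uniform $\epsilon$ by a different mechanism from the paper's.

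Both arguments share your Step~1. Both also rest on the standard fact that orbit maps of a transitive action of a second-countable Lie group are submersions. The paper uses this implicitly when it passes from surjectivity of $\pi_o$ to surjectivity of $(d\pi_o)_e$; you flag it explicitly.

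The paper then works at a single base point $o$. It takes a smooth local section $\mathcal S:\mathcal U\to G$ of $\pi_o$ whose elements have small displacement, chooses $\epsilon$ with $B_\epsilon(o)\subset\mathcal U$, and for $p=j(o)$ sets $f:=j\,\mathcal S(j^{-1}(q))\,j^{-1}$. This is exactly the conjugation route you set aside, and your objection to it does not apply. You never need uniform control of $g^{-1}Vg$ inside $G$. The quantity that must be small, $\sup_y d_h(f(y),y)$, is invariant under conjugation by isometries, and $d_h(o,j^{-1}(q))=d_h(p,q)$. Equivalently, take $V$ to be the sublevel set $\{f\in G:\sup_y d_h(f(y),y)<\delta\}$. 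Then $g^{-1}Vg=V$ and $\pi_{g(o)}(V)=g\,\pi_o(V)\supseteq B_\epsilon(g(o))$, whether or not $G$ is compact.

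Your Lebesgue-number argument, with $f=ba^{-1}\in WW^{-1}\subseteq V$, instead gets uniformity in $p$ from compactness of $\mathcal F$ and the group structure alone. It needs only openness of the orbit maps rather than smooth local sections. It never uses that $G$ acts by isometries, so it applies verbatim to any transitive action by homeomorphisms with open orbit maps on a compact metric space. The price is a non-explicit $\epsilon$. The paper's version uses the isometric action to make $\epsilon$ explicit (a ball radius at one point), and its transport step does not depend on compactness.
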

\begin{proof}

Fix a basepoint $o\in \mathcal F$, and consider the orbit map 
$$
\pi_o:G\to \mathcal F~:~\pi_o(\phi)=\phi\cdot o.
$$ 
Since the action of $G$ on $\mathcal{F}$ is transitive, $\pi_o$ is surjective. Moreover, because the action is smooth, the derivative map at $e\in G$ (where $e$ is the identity element of $G$) exists, and is given by 
$$
\left(d\pi_o\right)_e:T_eG\to T_o \mathcal{F}.
$$ 
The image of this map is the tangent space at $o$ to the orbit $G\cdot o$, i.e. $\textrm{Im}\left(\left(d\pi_o\right)_e\right)= T_o\left(G\cdot o\right)$. Since $\pi_o$ is surjective, we have $T_o\left(G\cdot o\right)=T_o\left(\mathcal{F}\right)$, and hence, this derivative map is surjective. Thus, by the submersion theorem, there exists an open neighbourhood $\mathcal{U}\subset \mathcal{F}$
of $o$ and a smooth map
$$
\mathcal{S}:\mathcal{U}\to G~:~\mathcal{S}(o)=e,~\textrm{and}~\pi_o(\mathcal{S}(y))=\mathcal{S}(y)\cdot o=y,\quad\forall ~y\in \mathcal{U}.
$$
In other words, $\mathcal{S}(y)$ is a smoothly chosen group element that sends the base point $o$ to a nearby point $y$, where $y\in \mathcal{U}$.

Fix $\delta>0$. Since $\mathcal S(o)=e$, and since $\mathcal F$ is compact, the convergence $\mathcal S(y)\to e$ as $y\to o$ is uniform on $\mathcal F$. Therefore, after shrinking $\mathcal U$ if necessary, we may assume that
$$
\sup_{x\in\mathcal F} d_h\bigl(\mathcal S(y)\cdot x,x\bigr)<\delta ~\forall ~y\in\mathcal U.
$$
Choose \(\epsilon>0\) such that
$
B_\epsilon(o)\subset \mathcal U.
$
Let $p,q\in\mathcal F$ satisfy $d_h(p,q)<\epsilon$. By transitivity of $G$ on $\mathcal{F}$, we can choose $j\in G$ with
$
j(o)=p.
$
Set
$$
v:=j^{-1}(q).
$$
Since $j$ is an isometry,
$$
d_h(o,v)
=
d_h(j^{-1}p,j^{-1}q)
=
d_h(p,q)
<
\epsilon.
$$
Thus $v\in B_\epsilon(o)\subset\mathcal U$. Now define
$
f:=j\mathcal S(v)j^{-1}.
$
Then \(f\in G\), and
\begin{equation}\label{isometryeq1}
    f(p)
=
j\mathcal S(v)j^{-1} \cdot p
=
j\mathcal S(v) \cdot o
=
j\cdot v
=
q.
\end{equation}
Finally, for every $x\in\mathcal F$, using again that $j$ is an isometry,
$$
d_h(f(x),x)
=
d_h\bigl(j\mathcal S(v)j^{-1}(x),x\bigr)
=
d_h\bigl(\mathcal S(v)(j^{-1}x),j^{-1}x\bigr).
$$
Taking the supremum over $x\in\mathcal F$, and writing $y=j^{-1}x$, gives
\begin{equation}\label{isometryeq2}
    \sup_{x\in\mathcal F} d_h(f(x),x)
=
\sup_{y\in\mathcal F} d_h\bigl(\mathcal S(v)\cdot y,y\bigr)
<
\delta.
\end{equation}
From \eqref{isometryeq1} and \eqref{isometryeq2}, we have the desired $f$, thereby completing the proof (see \Cref{isometrylemmafigure} for reference).

\end{proof}
\begin{figure}[h]
    \centering
\includegraphics[width=0.5\linewidth]{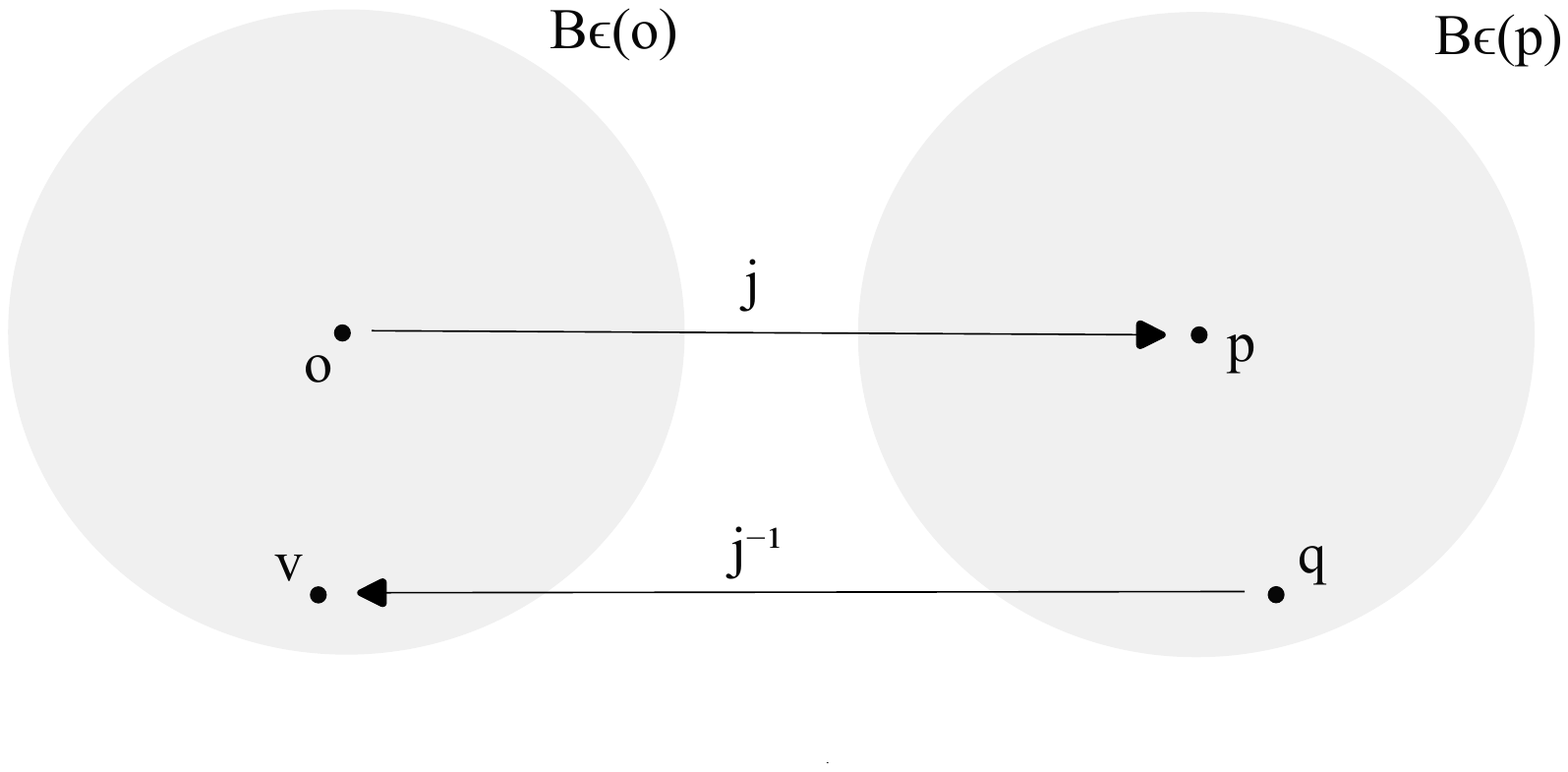}
    \caption{Consider an arbitrary $p\in\mathcal{F}$, where $(\mathcal{F},h)$ is a compact, connected Riemannian manifold admitting a Lie subgroup $G$ (of isometry group of $(\mathcal{F},h)$) acting transitively on $\mathcal{F}$ with base point $o\in\mathcal{F}$. If $q\in B_{\epsilon}(p)$ for some $p\in\mathcal{F}$, $\epsilon>0$, and $j\in G$ such that $j\cdot o=p$, then $v:=j^{-1}\cdot q\in B_{\epsilon}(o)$.}
    \label{isometrylemmafigure}
\end{figure}

The next  \Cref{prerequisite to clKcompact} is a prerequisite to \Cref{lemmakclcompact}, which is  then used in the argument for proving \Cref{maintheorem}. 

\begin{lemma}\label{prerequisite to clKcompact} 
Let $(\mathcal F,h)$ be compact, connected, and homogeneous, and let $G\subset \textrm{Isom}(\mathcal F,h)$ be a Lie subgroup acting transitively on $\mathcal F$. Let $\gamma: [-1,0)\to \mathcal{M}_{\textrm{int}}$ be future-directed and future-inextendible timelike curve. Let $\tau_1\in(-1,0)$. Then, for any $\tau_0\in(\tau_1,0)$, there exists a $\delta>0$ and a neighborhood $V\subset G$ of the identity element $e\in G$, such that 
\begin{equation*}
\big(\gamma_t(\tau) - \delta, \gamma_t(\tau) +\delta\big) \times \big{\{}\gamma_r(\tau)\big{\}} \times \left\{a\cdot\gamma_y(\tau)\vert a\in V\right\} \subseteq I^+\big(\gamma(\tau_1),\mathcal{M}_{\textrm{int}}\big)
\end{equation*}
holds for all $\tau \in (\tau_0,0)$.
\end{lemma}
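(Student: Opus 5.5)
The plan is to use the symmetries of $\mathcal{M}_{\textrm{int}}$ to reduce the uniform statement to a single fixed point. For $c\in\mathbb{R}$ the translation $(t,r,y)\mapsto(t+c,r,y)$ is an isometry, because $\partial_t$ is Killing. For $a\in G$ the map $(t,r,y)\mapsto(t,r,a\cdot y)$ is also an isometry, because $g$ is a warped product and $a$ preserves $h$. Neither map changes $r$, and since the time orientation is given by $-\partial_r$, both preserve it. Therefore both maps send chronological futures to chronological futures. They commute, and together they generate a group $H=\mathbb{R}\times G$ acting on $\mathcal{M}_{\textrm{int}}$ by time-orientation-preserving isometries. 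The strategy is to find one open set of the form "slab in $t$ times neighbourhood in $G$" which sits inside $I^+(\gamma(\tau_1))$ at one point of $\gamma$, and then to transport it along $\gamma$.

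\emph{Step 1 (a single open set).} Fix $\tau_0\in(\tau_1,0)$ and set $q:=\gamma(\tau_0)$, so that $q\in I^+(\gamma(\tau_1),\mathcal{M}_{\textrm{int}})$. This set is open. The map $H\to\mathcal{M}_{\textrm{int}}$, $(c,a)\mapsto (c,a)\cdot q$, is continuous, so there exist $\delta>0$ and a neighbourhood $V_0$ of $e$ with $(c,a)\cdot q\in I^+(\gamma(\tau_1))$ whenever $|c|<\delta$ and $a\in V_0$. Choose a symmetric neighbourhood $V$ of $e$ with $V\cdot V\subset V_0$; the extra room from $V_0$ is kept in reserve for the next step.

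\emph{Step 2 (transport along $\gamma$).} Let $\tau\in(\tau_0,0)$ and write $p=\gamma(\tau)$. The goal is to show $(c,a)\cdot p\in I^+(\gamma(\tau_1))$ for $|c|<\delta$ and $a\in V$. Since $p\in I^+(q)$, we get $(c,a)\cdot p\in I^+((c,a)\cdot q)$. By Step 1, $(c,a)\cdot q\in I^+(\gamma(\tau_1))$. Transitivity of $\ll$ then gives the claim. The set of all such $(c,a)\cdot p$ is exactly
\[
(\gamma_t(\tau)-\delta,\gamma_t(\tau)+\delta)\times\{\gamma_r(\tau)\}\times\{a\cdot\gamma_y(\tau)\mid a\in V\},
\]
so $\delta$ and $V$ do not depend on $\tau$. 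The estimates of \Cref{lemma1} are not needed, and neither is the shrinking to $V\cdot V\subset V_0$; it suffices to take $V=V_0$.

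\emph{Main obstacle.} The main point to check is that the fibre action really is an isometry of $g$ and preserves the time orientation. This holds because $a^*h=h$, the coefficients depend only on $r$, and the maps fix $r$, hence preserve $-\partial_r$. Once that is settled, the only genuine input is the continuity of the action in Step 1. That continuity is needed at one single point, not uniformly, because the $H$-equivariance of $I^+$ removes the need for any uniformity in $\tau$.
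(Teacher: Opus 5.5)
Your proof is correct and follows the same two-step structure as the paper: first place the $\mathbb{R}\times G$-orbit neighbourhood of $\gamma(\tau_0)$ inside $I^+\big(\gamma(\tau_1),\mathcal{M}_{\textrm{int}}\big)$, then carry it along $\gamma$ by applying the isometry $(t,r,y)\mapsto(t+c,r,a\cdot y)$ to $\gamma|_{[\tau_0,\tau]}$ (this is the paper's curve $\hat\sigma$). The only difference is in the first step. You get it directly from the openness of $I^+\big(\gamma(\tau_1),\mathcal{M}_{\textrm{int}}\big)$ and the continuity of the orbit map, whereas the paper explicitly constructs perturbed timelike curves $\sigma$ with small $\dot\lambda$ and $\dot a$. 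Your version is shorter and avoids that smallness estimate, which the paper does not quantify.
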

\begin{proof}
    Since $\gamma$ is timelike and $g_{\textrm{int}}$ is continuous, for any $\tau_0\in(\tau_1,0)$, there exists a $\mu >0$ such that $\forall ~\tau \in [\tau_1,\tau_0]$,
\begin{equation}
\label{gdotgammas}
g_{\textrm{int}}\big(\dot{\gamma}(\tau), \dot{\gamma}(\tau)\big) < - \mu. 
\end{equation}

We now choose functions $\lambda\in C^\infty([\tau_1,\tau_0],\mathbb R)$ and $a\in C^\infty([\tau_1,\tau_0],G)$, and define a curve $\sigma : [\tau_1,\tau_0] \to \mathcal{M}_{\textrm{int}}$ by 
$$
\sigma(\tau) := (\gamma_t(\tau)+\lambda(\tau),\gamma_r(\tau),a(\tau)\cdot\gamma_y(\tau)).
$$ 
Since the $G$-action is by isometries of $(\mathcal F,h)$, and since $\gamma_r(\tau)$ is bounded away from $0$ on $[\tau_1,\tau_0]$, it follows from \eqref{gdotgammas} that there exists $\eta>0$ such that $\sigma$ is timelike whenever 
\begin{equation}\label{smallnesslemma4}
    \|\dot\lambda\|_{L^\infty([\tau_1,\tau_0])} + \|\dot a\|_{L^\infty([\tau_1,\tau_0])} <\eta .
\end{equation}
Here, the norm on $\dot a$ is taken with respect to any fixed Riemannian metric on the Lie group $G$. Hence there exist $\delta>0$ and an open neighbourhood $V\subset G$ of the identity $e\in G$ such that, for every $\lambda_{\tau_0}\in(-\delta,\delta)$ and every $a_{\tau_0}\in V$, one can choose smooth $\lambda$ and $a$ satisfying 
$$
\lambda(\tau_1)=0,\qquad \lambda(\tau_0)=\lambda_{\tau_0}, \qquad a(\tau_1)=e,\qquad a(\tau_0)=a_{\tau_0},
$$
and such that the inequality \eqref{smallnesslemma4} holds. Thus $\sigma$ is timelike from $\gamma(\tau_1)$ to $ (\gamma_t(\tau_0)+\lambda_{\tau_0}, \gamma_r(\tau_0), a_{\tau_0}\cdot\gamma_y(\tau_0)).$  Now consider the curve 
$$
\hat\sigma(\tau) := (\gamma_t(\tau)+\lambda_{\tau_0}, \gamma_r(\tau), a_{\tau_0}\cdot\gamma_y(\tau)), \qquad \tau\in[\tau_0,0). 
$$
This curve is timelike because it is the image of $\gamma|_{[\tau_0,0)}$ under the spacetime isometry $(t,r,y)\mapsto (t+\lambda_{\tau_0},r,a_{\tau_0}\cdot y).$ Concatenating $\sigma$ and $\hat\sigma$, the claim follows (see \Cref{fig:clK}).

\end{proof}

\begin{lemma}\label{lemmakclcompact}
 Let $\gamma:[-1,0)\to \mathcal{M}_{\textrm{int}}$ be a future-directed and future-inextendible timelike curve with compact, connected, and homogeneous fibre $\mathcal F$ (like in \Cref{prerequisite to clKcompact}). For any $\tau_1,\tau_2\in(-1,0)$, with $\tau_2<\tau_1$,  define
\begin{equation}\label{Kdefn}
  K \left(\tau_1,\tau_2\right):=
\Big[\Big(\bigcup_{\tau_2 < \tau < 0} I^-\big(\gamma(\tau), \mathcal{M}_{\textrm{int}} \big)\Big) \cap I^+\big( \gamma(\tau_2), \mathcal{M}_{\textrm{int}}\big) \Big] \setminus I^+\big( \gamma(\tau_1), \mathcal{M}_{\textrm{int}}\big).  
\end{equation}
Fix $\tau_1$ and $\tau_2$. Then $\bar{K}\left(\tau_1,\tau_2\right)$ (closure of $K$) is compact in $M_{\textrm{int}}$.
\end{lemma}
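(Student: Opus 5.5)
Since $\mathcal{M}_{\textrm{int}}=\mathbb{R}\times(0,r_H)\times\mathcal F$ with $\mathcal F$ compact, a closed subset of $\mathcal{M}_{\textrm{int}}$ is compact as soon as its $t$-coordinate is bounded and its $r$-coordinate ranges in a compact subinterval $[r_-,r_+]\subset(0,r_H)$. The plan is to verify exactly these two bounds for $K(\tau_1,\tau_2)$, and then observe that the closure of $K$ in $\mathcal{M}_{\textrm{int}}$ satisfies the same bounds.

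\textbf{Upper bound on $r$.} This is immediate. By the footnote to \Cref{obstruction}, $r$ is strictly decreasing along future-directed causal curves in $\mathcal{M}_{\textrm{int}}$. Every point of $K\subset I^+(\gamma(\tau_2),\mathcal{M}_{\textrm{int}})$ therefore satisfies $r<\gamma_r(\tau_2)=:r_+<r_H$.

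\textbf{Bound on $t$.} Each point $p\in K$ lies on a future-directed timelike curve from $\gamma(\tau_2)$ to some $\gamma(\tau)$ with $\tau\in(\tau_2,0)$. Reparametrise the portion of this curve below $r=\gamma_r(\tau_2)$ by $r$. Since $\sup|\psi|<\infty$ and $2m(r)-r$ is bounded below by a positive constant on $(0,\gamma_r(\tau_2)]$ (using $m_0>0$ and the single root $r_H$), the estimate \eqref{tbound} of \Cref{lemma1} applies on the whole of $(0,r_+]$ and gives an integrable bound on $|\dot\sigma_t|$. Hence $|p_t-\gamma_t(\tau_2)|\le C$ uniformly in $p$. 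Alternatively, one can invoke \Cref{corollary1} directly on $I^+(\gamma(\tau_2))$, after possibly shrinking; this is harmless because the bound on $(0,r_+]$ is uniform.

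\textbf{Lower bound on $r$ (the main obstacle).} We must show that points of $K$ stay away from $r=0$, and this is where removing $I^+(\gamma(\tau_1))$ and the homogeneity of $\mathcal F$ enter. Apply \Cref{prerequisite to clKcompact} with base parameter $\tau_1$ and some $\tau_0\in(\tau_1,0)$. It gives $\delta>0$ and a neighbourhood $V$ of $e\in G$ such that for every $\tau\in(\tau_0,0)$ the set
\[
(\gamma_t(\tau)-\delta,\gamma_t(\tau)+\delta)\times\{\gamma_r(\tau)\}\times(V\cdot\gamma_y(\tau))
\]
lies in $I^+(\gamma(\tau_1))$. Since $-\partial_r$ is future-directed timelike, the same holds with $\{\gamma_r(\tau)\}$ replaced by $(0,\gamma_r(\tau)]$. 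Because the orbit map is a submersion (as in the proof of \Cref{lemmaofisometry}), $V\cdot\gamma_y(\tau)$ contains an $h$-ball of a radius $\epsilon'>0$ independent of $\tau$, by homogeneity. Now use \Cref{corollary1} together with the convergence of $\gamma_t$ and $\gamma_y$ as $r\to0$ from \Cref{lemma1}: choose $\tau^*\in(\tau_0,0)$ so close to $0$ that every point of $I^+(\gamma(\tau^*))$, and also every point below $r=\gamma_r(\tau^*)$ lying in $I^-(\gamma(\tau))$ for some $\tau>\tau^*$, has $t$ within $\delta$ and $y$ within $\epsilon'$ of $(\gamma_t(\tau^*),\gamma_y(\tau^*))$.

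The delicate point is to control points of $I^-(\gamma(\tau))$ with small $r$. For such a point $p$ with $p_r<r_-:=\gamma_r(\tau^*)$, pick a timelike curve from $p$ to $\gamma(\tau)$. Then $\tau>\tau^*$, and applying the integrable estimates \eqref{tbound}--\eqref{ybound} to this curve and to $\gamma$ on $(0,r_-]$ bounds $|p_t-\gamma_t(\tau^*)|$ and $d_h(p_y,\gamma_y(\tau^*))$ by quantities tending to $0$ as $r_-\to0$. Thus $p$ lies in the box above, so $p\in I^+(\gamma(\tau_1))$, and $p\notin K$. Consequently every point of $K$ satisfies $r\ge r_->0$.

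\textbf{Conclusion.} Combining the three bounds, $K$ is contained in the compact set $[\gamma_t(\tau_2)-C,\gamma_t(\tau_2)+C]\times[r_-,r_+]\times\mathcal F\subset\mathcal{M}_{\textrm{int}}$. Its closure $\bar K$ is therefore a closed subset of a compact set, and hence compact.
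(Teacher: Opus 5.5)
Your proposal is correct and follows the paper's proof. You get the upper $r$-bound from $K\subset I^+(\gamma(\tau_2))$ and the $t$-bound from \eqref{tbound}, and you get the lower $r$-bound by showing, via \Cref{prerequisite to clKcompact} and the integrable estimates of \Cref{lemma1}, that points of $\bigcup_{\tau}I^-(\gamma(\tau),\mathcal{M}_{\textrm{int}})$ with small $r$ already lie in $I^+(\gamma(\tau_1),\mathcal{M}_{\textrm{int}})$.

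Two of your choices are harmless refinements of the same argument. First, you use a single box at $\tau^*$ pushed down along $-\partial_r$, where the paper uses a union of boxes at matching $r$-levels. Second, you state explicitly that $V\cdot\gamma_y(\tau)$ contains an $h$-ball of uniform radius, which the paper uses only implicitly. That uniformity should be justified by compactness of $\mathcal F$ (or the convergence of $\gamma_y$) rather than by homogeneity alone, since $V\cdot(j\cdot o)=j\cdot(j^{-1}Vj)\cdot o$ involves a conjugated neighbourhood.
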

\begin{proof}
In \Cref{prerequisite to clKcompact}, we fix $\tau_0\in(\tau_1,0)$, and obtain $\delta>0$ and $V\subset G$. We reparametrise $ \gamma:[-1,0)\to \mathcal{M}_{\textrm{int}}$ by the radial coordinate along $\gamma$ and obtain a new curve $\hat{\gamma}:[-\gamma_r(\tau_1),0)\to M_{\textrm{int}}$ (whose image is the same as that of $\gamma$). The reparametrisation map
\begin{align*}
    X:[\tau_1,0)&\to [-\gamma_r(\tau_1),0)
    \\
    \tau&\mapsto X(\tau)=:s
\end{align*}
is a strictly monotone diffeomorphism onto its image such that $X(\tau_1)=-\gamma_r(\tau_1)$, $\lim_{\tau\to 0}X(\tau)=0$. Let us define $s_0:=X(\tau_0)$. Since $\hat\gamma$ is parametrised by the radial coordinate, we can use \Cref{lemma1} as follows: There exists $\bar s\in(s_0,0)$ such that 
\begin{equation*}
\begin{aligned}
   &  I^-\left(\hat\gamma(s),M_{\textrm{int}}\right)\cap\left\{r\leq\hat \gamma_r(\bar s)\right\}\\ &\subseteq    \bigcup_{s_0\leq s'<0} (\hat \gamma_t (s')-\delta,\hat \gamma_t(s')+\delta) \times \{\hat \gamma_r(s')\}\times \left\{ a\cdot \hat\gamma_y (s')\vert a\in V\right\},
\end{aligned}
\end{equation*}
for all $s\in (-\gamma_r(\tau_1),0)$. Reparametrising  $\hat\gamma$ back by $\tau$, the last line above can be rewritten as
\begin{equation*}
    \bigcup_{\tau_0\leq \tau'<0}
    \bigl(
        \gamma_t(\tau')-\delta,
        \gamma_t(\tau')+\delta
    \bigr)
    \times
    \{\gamma_r(\tau')\}
    \times
    \left\{ a\cdot \gamma_y (\tau')\vert a\in V\right\},
\end{equation*}
where $\tau':=X^{-1}(s')$. Additionally,
we obtain
$$
  I^-\left(\hat \gamma(s),\mathcal{M}_{\textrm{int}}\right) = I^-\left(\gamma (X^{-1}(s)),\mathcal{M}_{\textrm{int}}\right)= I^-\left(\gamma(\tau),\mathcal{M}_{\textrm{int}}\right) ~\forall~\tau\in(\tau_1,0).
$$
We can hence conclude that there exists $X^{-1}(\bar s)=:\bar\tau\in (\tau_0,0)$ such that
\begin{equation}\label{subsetkclosurecompact}
      I^-\left(\gamma(\tau),\mathcal{M}_{\textrm{int}}\right)\cap\left\{r\leq\gamma_r(\bar \tau)\right\} \subseteq I^{+}(\gamma(\tau_1),\mathcal{M}_{\text{int}})~\forall~ \tau\in (\tau_1,0),
\end{equation}
where we used  \Cref{prerequisite to clKcompact} to get the expression on the right hand side (see \Cref{fig:clK}). This implies 
$K \subseteq \mathbb{R} \times \big(\gamma_r(\bar{\tau}), \gamma_r(\tau_2)\big) \times \mathcal{F} \;.$ Moreover, the bound \eqref{tbound} implies that there are $t_0, t_1\in\mathbb{R}$ such that 
$$
I^+\big(\gamma(\tau_2),\mathcal{M}_{\textrm{int}}\big) \subseteq (t_0,t_1) \times \big(0, \gamma_r(\tau_2)\big) \times \mathcal F \;.
$$
It follows that 
$$
K \subseteq (t_0,t_1) \times \big(\gamma_r(\bar{\tau}), \gamma_r(\tau_2)\big) \times \mathcal F ,
$$
which implies that $\overline{K}$ is compact, since $\mathcal F$ is compact.
\begin{figure}[h]
    \centering
    \includegraphics[width=0.5\linewidth]{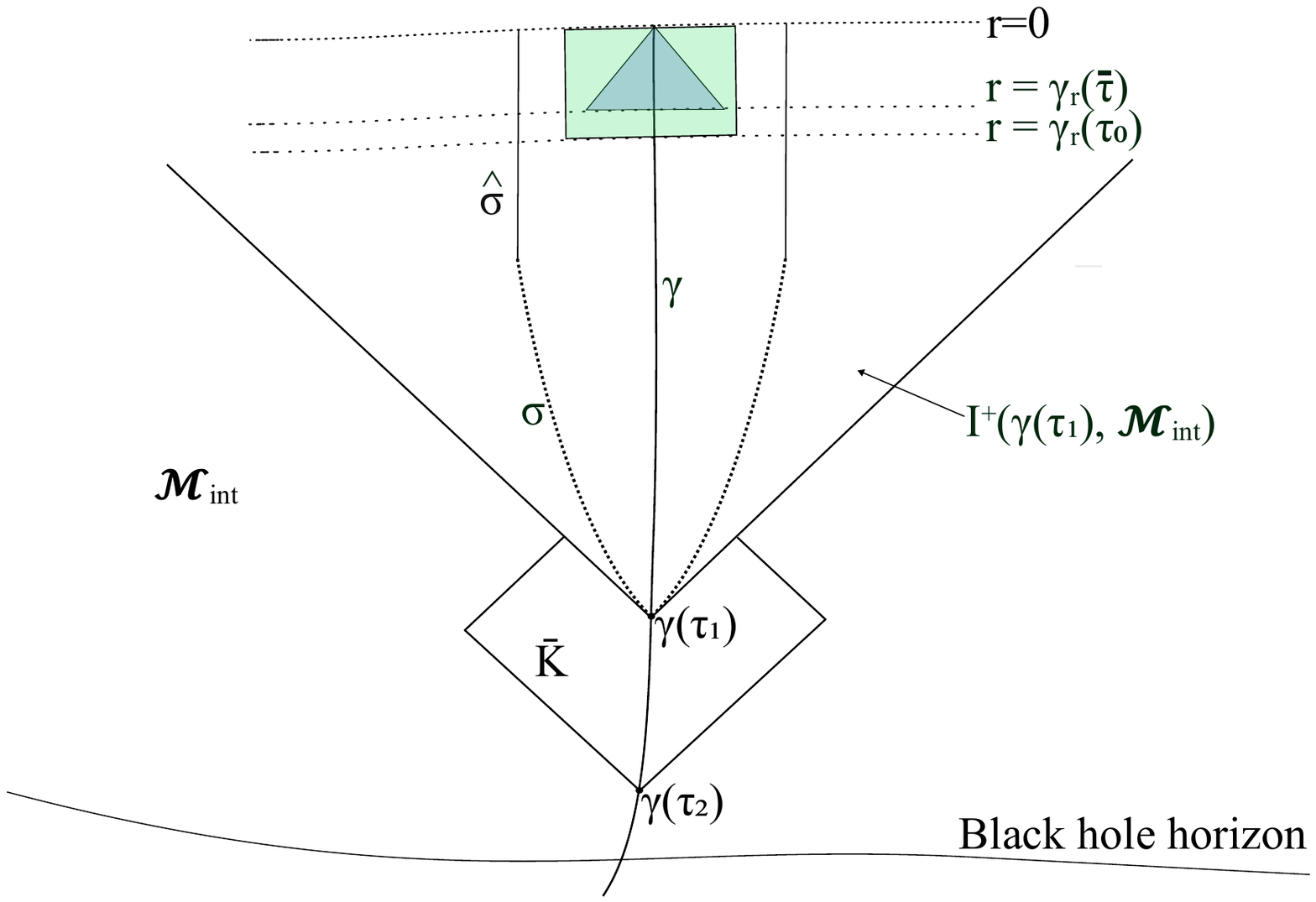}
    \caption{The picture depicts the timelike curves $\sigma$ and $\hat\sigma$ constructed in the proof of \Cref{prerequisite to clKcompact}, the closure of set $K$ defined in Eq. \eqref{Kdefn}, and the statement \eqref{subsetkclosurecompact} in the proof of  \Cref{lemmakclcompact}}
    \label{fig:clK}
\end{figure}
\end{proof}

Now that all the necessary auxiliary results and definitions have been established, we are ready to proceed to the proofs of the theorems.

\section{Proof of \Cref{maintheorem} -- No \texorpdfstring{$C^0$}{C0}-extension through the central curvature singularity}\label{sec4}
The proof follows Sbierski's strategy in proving Theorem 1 in \cite{Sbierski_2018}, and is as follows.  Assume that there exists a $C^0$-extension $\iota: \mathcal{M}\hookrightarrow \tilde{ \mathcal{M}}$ with the following property: there exists an affinely parametrised, future-directed and future-inextendible timelike geodesic $\rho:[0,b)\to \mathcal{M}$, where $b<\infty$, with the $\lim_{s\to b} r\circ \rho (s)=0$, and such that its future endpoint is contained in $\tilde{\mathcal{M}}$, i.e.,  $\lim_{s\to b}\left(\iota\circ \rho\right)(s)\in \partial ^{+}\iota\left(\mathcal{M}\right)\subset\tilde{\mathcal{M}}$. The \Cref{minkowskianchartproposition} and \Cref{minkowskianchartproposition2} imply the existence of the Minkowskian chart $\tilde \varphi: \tilde U\to \mathbb{R}_{\varepsilon_0,\varepsilon_1}$ which is centred at $\tilde p:=\lim_{s\to b}\left(\iota\circ \rho\right)(s)$, i.e., $\lim_{s\to b}\left(\tilde \varphi\circ\iota\circ \rho\right)(s)=\left(0,..,0\right)$. 
 
Hereafter, we work with the curve $\tilde{\gamma} : (-\varepsilon_0, 0] \to \tilde{\mathcal{M}}$, which, in the chart  $\tilde{\varphi}$ is given by a vertical line $(-\varepsilon_0, 0] \ni s \mapsto (s, 0, \ldots, 0)$ \footnote{Here, by abuse of notation, the parameter $s$ in the definition of $\tilde\gamma$ is the coordinate parameter in the Minkowskian chart, and takes values in $[-\varepsilon_0,0)$. It is different from the affine parameter of $\rho$ which is also denoted by $s$ but takes values in $[0,b)$, where $b<\infty$.}. We then consider $\gamma :=  \iota^{-1} \circ \tilde{\gamma}|_{(-\varepsilon_0, 0)}$, which is a future-directed and future-inextendible (not assumed to be affinely parametrised) timelike curve in $\mathcal{M}$, which can be shown to have the property that $(r \circ \gamma)(s) \to 0$ for $s \to 0$. 

The first step is then to show the following statement:
 There exists a $\mu>0$ such that
    \begin{enumerate}[(A):]
\item \label{statementA} $\iota \Big( I^+\big( \gamma(-\mu), \mathcal{M}\big)\Big) \subseteq \tilde{\varphi}^{-1} \left(f_{<}\right)$
\item \label{statementB} $(-\varepsilon_0, -\frac{49}{50} \varepsilon_0) \times (-\varepsilon_1, \varepsilon_1)^d \subseteq I^-\left(\left(\tilde{\varphi}\circ \tilde{\gamma}\right)(-\mu), \mathbb{R}_{\varepsilon_0,\varepsilon_1}\right) \;.$
\end{enumerate}
 The next step is to prove the following: The \Cref{statementA} and \Cref{statementB} together imply \Cref{statementC}, where 
 \begin{enumerate}[(A):]
 \setcounter{enumi}{2}
     \item \label{statementC} There exists a constant $0<C_d < \infty$ such that for any Cauchy hypersurface $\Sigma\subset \mathcal{M}$ such that $\Sigma \cap I^+\left(\gamma(-\mu), \mathcal{M}_{\textrm{int}}\right)\neq \emptyset$,  the distance in $\Sigma$ of any two points in $I^+\left(\gamma(-\mu), \mathcal{M}_{\textrm{int}}\right) \cap \Sigma$ is bounded from above by $C_d$. 
 \end{enumerate}
 Finally, the statement \Cref{statementC} contradicts \Cref{obstruction}. This then implies that $\tilde \gamma$ as defined above does not have the property that  $(r \circ \gamma)(s) \to 0$ for $s \to 0$, which will in turn imply that $\rho$ as defined above does not have the property that  $(r \circ \rho)(s) \to 0$ for $s \to b$, which will finally imply that the assumption of $C^0$-extension through $r\to0$ is not true. 

\subsection{The setup in the Minkowskian chart}\label{thesetupmainproof}
We consider the following notations.
    \begin{align*}
      &  C^+_a := \big{\{} X \in \mathbb{R}^{4} \, | \, \frac{<X,e_0>_{\mathbb{R}^{4}}}{|X|_{\mathbb{R}^{4}}}   > a \big{\}}\\
       & C^-_a := \big{\{} X \in \mathbb{R}^{4} \, | \, \frac{<X,e_0>_{\mathbb{R}^{4}}}{|X|_{\mathbb{R}^{4}}}   < -a \big{\}}\\
       & C^c_a := \big{\{} X \in \mathbb{R}^{4} \, | \, -a < \frac{<X,e_0>_{\mathbb{R}^{4}}}{|X|_{\mathbb{R}^{4}}}   < a \big{\}}.
    \end{align*}
We have $C_a^+$ to denote the cone of vectors whose angle with the $x_0$--axis is strictly smaller than $\cos^{-1}(a)$ and which points in the positive $x_0$--direction. We call it the forward cone of vectors. Analogously, $C_a^-$ denotes the corresponding cone in the negative $x_0$--direction, and called the backward cone. In the Minkowski spacetime, the cones of future and past-directed timelike vectors are characterised by the threshold value
$
a = \cos\!\left(\frac{\pi}{4}\right) = \frac{1}{\sqrt{2}}.
$
Since
$
\frac{5}{8} < \frac{1}{\sqrt{2}} < \frac{5}{6},
$
we can fix $\delta=\delta_0 > 0$ such that, in the coordinate chart $\tilde{\varphi}$ which follows from the \Cref{minkowskianchartproposition} and \Cref{minkowskianchartproposition2}, all vectors belonging to $C_{5/6}^+$ are future-directed timelike, all vectors in $C_{5/6}^-$ are past-directed timelike, and all vectors in $C^c_{5/8}$ are spacelike. The following estimates then hold for $x \in \mathbb{R}_{\varepsilon_0\varepsilon_1}$.
\begin{equation}
\label{diamondsinchart}
\begin{split}
&\big( x + C^+_{5/6}\big) \cap  \mathbb{R}_{\varepsilon_0,\varepsilon_1} \subseteq I^+(x, \mathbb{R}_{\varepsilon_0,\varepsilon_1}) \subseteq \big( x + C^+_{5/8}\big) \cap  \mathbb{R}_{\varepsilon_0,\varepsilon_1} \\
&\big( x + C^-_{5/6}\big) \cap  \mathbb{R}_{\varepsilon_0,\varepsilon_1} \subseteq I^-(x, \mathbb{R}_{\varepsilon_0,\varepsilon_1}) \subseteq \big( x + C^-_{5/8}\big) \cap  \mathbb{R}_{\varepsilon_0,\varepsilon_1}  \;.
\end{split}
\end{equation}
We fix the coordinate chart given by $\tilde \phi$ such that $0< \varepsilon_1 < \frac{1}{2} \varepsilon_0$. This can be done without any loss of generality. Choose points
$
x^+ := (x^+_0,0,0,0)$, $x^+_0 \in(0, \varepsilon_0),
$ and $ x^- := (x^-_0,0,0,0)$, where $ x^-_0\in (-\varepsilon_0,0),
$
such that (see \Cref{fig:mincones} for reference)
\begin{equation}\label{minksetup2}
    \overline{\bigl(x^+ + C^-_{5/6}\bigr) \cap \bigl(x^- + C^+_{5/6}\bigr)}
\subset \mathbb{R}_{\varepsilon_0,\varepsilon_1}
\end{equation}
is compact. Next, fix
$
y^- := (y^-_0,0,\ldots,0)$, where $-\tfrac{1}{5}\varepsilon_0 < y^-_0 < 0,
$
with the property that
\begin{equation}\label{minksetup3}
    \overline{C^-_{5/8} \cap \bigl(y^- + C^+_{5/8}\bigr)}
\subset \bigl(x^+ + C^-_{6/7}\bigr) \cap \bigl(x^- + C^+_{6/7}\bigr).
\end{equation}
Under the assumptions \( -\tfrac{1}{5}\varepsilon_0 < y^-_0 \) and \( 0 < \varepsilon_1 < \tfrac{1}{2}\varepsilon_0 \), it then follows that
\begin{equation}\label{BottomofMink}
(-\varepsilon_0, -\tfrac{49}{50}\varepsilon_0) \times (-\varepsilon_1, \varepsilon_1)^d
\subset I^-\bigl(y^-, \mathbb{R}_{\varepsilon_0,\varepsilon_1}\bigr).
\end{equation}

\begin{figure}[h]
    \centering
\includegraphics[width=0.8\linewidth]{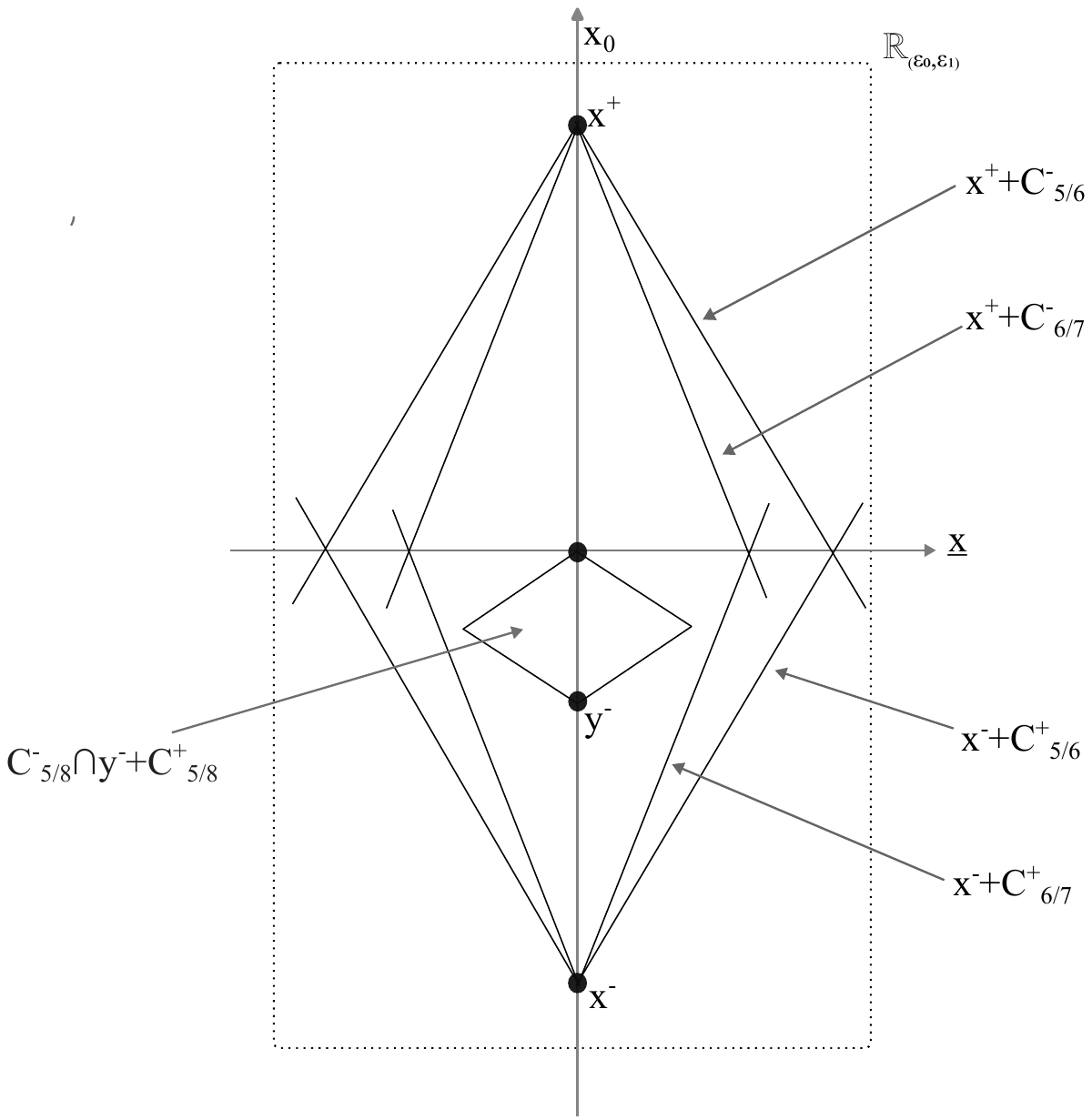}
    \caption{The setup of the Minkowskian chart in Sbierski's method \cite{Sbierski_2018}.}
    \label{fig:mincones}
\end{figure}

\subsection{The proof of  \Cref{statementA} and \Cref{statementB}}\label{sec4.2}
We now turn to the proof of \Cref{statementA} and \Cref{statementB}, using the Minkowskian chart from  4.1 together with the technical tools discussed in  \Cref{sec3}. To this end, we prove \Cref{causaldiamonds same}, \Cref{Ktls}, \Cref{Wtls}, and \Cref{Wtls2}. These were proved in \cite{Sbierski_2018} for Schwarzschild spacetime and extend to the more general class of spacetimes considered here, with the exception of \Cref{Wtls2}, whose proof additionally relies on spherical symmetry.

\begin{lemma}\label{causaldiamonds same}
Assume that there exists a $C^0$-extension of $\mathcal{M}_{\textrm{int}}$ through $r\to 0$. Let $\tilde r\in (0,r_H)$ be chosen as in \Cref{prop:foc}. Let $\gamma:[-\varepsilon_0,0)\to \mathcal{M}_{\tilde r}:=\mathcal{M}_{\textrm{int}}\cap\{r<\tilde r\}$ be a future-directed and future-inextendible timelike curve as described in the introduction of \Cref{sec4}, and consider the setup of the Minkowskian chart as described in  \Cref{thesetupmainproof}. Then, for all $s\in(y^-_0, 0)$, the following equality holds. 
\begin{equation}
\label{causaldiamondssameeq}
\tilde{\varphi}^{-1} \left(I^-\left((s,0, \ldots, 0), \mathbb{R}_{\varepsilon_0,\varepsilon_1}\right) \cap I^+\left(y^-, \mathbb{R}_{\varepsilon_0,\varepsilon_1}\right) \right) \\
= \iota \left(I^-\big(\gamma(s), \mathcal{M}_{\textrm{int}} \big) \cap I^+\left( \gamma(y^-_0), \mathcal{M}_{\textrm{int}}\right) \right) \;.
\end{equation}
\end{lemma}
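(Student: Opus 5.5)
We prove the two inclusions separately, following Step 1 in the proof of Theorem 1 of \cite{Sbierski_2018}. Throughout, write $\tilde x_s:=(s,0,\ldots,0)$, so that $\tilde\varphi\circ\iota\circ\gamma(s)=\tilde x_s$ and $\tilde x_{y^-_0}=y^-$. By \eqref{diamondsinchart}, \eqref{minksetup2} and \eqref{minksetup3}, the chart diamond $D_s:=I^-(\tilde x_s,\mathbb{R}_{\varepsilon_0,\varepsilon_1})\cap I^+(y^-,\mathbb{R}_{\varepsilon_0,\varepsilon_1})$ is contained in $\overline{C^-_{5/8}\cap(y^-+C^+_{5/8})}$. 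This set is compactly contained in $\mathbb{R}_{\varepsilon_0,\varepsilon_1}$, and it lies strictly below the Lipschitz graph, i.e.\ in $f_<$.

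\textbf{Inclusion ``$\subseteq$''.} Let $\tilde q\in D_s$. There is a future-directed timelike curve in the chart from $y^-$ to $\tilde q$ and one from $\tilde q$ to $\tilde x_s$. Both lie in $D_s$, since every point on them is in the future of $y^-$ and the past of $\tilde x_s$ within the chart. Hence they lie in $f_<\subseteq\tilde\varphi(\iota(\mathcal M)\cap\tilde U)$. Pulling back by $\iota^{-1}$ gives future-directed timelike curves in $\mathcal M$ from $\gamma(y^-_0)$ to $\iota^{-1}(\tilde\varphi^{-1}(\tilde q))$ and then to $\gamma(s)$. Since $\gamma(y^-_0)\in\mathcal M_{\mathrm{int}}$ and $\mathcal M_{\mathrm{int}}$ is future-closed under causal curves ($r$ is strictly decreasing along future causal curves there, as in the footnote to \Cref{obstruction}), these curves stay in $\mathcal M_{\mathrm{int}}$. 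Indeed they stay in $\mathcal M_{\tilde r}$, because $r\le r(\gamma(y^-_0))<\tilde r$ along them. This gives ``$\subseteq$''.

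\textbf{Inclusion ``$\supseteq$''.} This is the main obstacle. Let $q\in I^-(\gamma(s),\mathcal M_{\mathrm{int}})\cap I^+(\gamma(y^-_0),\mathcal M_{\mathrm{int}})$, and let $\sigma$ be a future-directed timelike curve from $\gamma(y^-_0)$ through $q$ to $\gamma(s)$. As above, $\sigma$ lies in $\mathcal M_{\tilde r}$. Its image under $\iota$, however, need not a priori lie in $\tilde U$. To control it we use future one-connectedness.

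By \Cref{prop:foc}, which needs $\mathrm{inj}(\mathcal F,h)>0$ and is automatic for closed $\mathcal F$, there is a timelike homotopy with fixed endpoints in $\mathcal M_{\tilde r}$ from $\gamma|_{[y^-_0,s]}$ to $\sigma$. We then apply \Cref{imageofhomotopy}, in its timelike version, with $\tilde U$ the chart domain. Its hypothesis is that $I^+(\tilde\gamma(y^-_0),\tilde U)\cap I^-(\tilde\gamma(s),\tilde U)\Subset\tilde U$, which is exactly the compactness of $\overline{D_s}$ in $\mathbb{R}_{\varepsilon_0,\varepsilon_1}$ obtained from \eqref{minksetup2} and \eqref{minksetup3}. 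The lemma then gives $\iota\circ\sigma\subset\tilde U$.

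Once $\iota\circ\sigma$ lies in $\tilde U$, it is a future-directed timelike curve in the chart from $y^-$ to $\tilde x_s$, since $\iota$ is an isometric embedding and preserves the time orientation. Hence $\tilde\varphi(\iota(q))\in D_s$, which proves ``$\supseteq$''.

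The delicate points are the following. First, the endpoints must be handled correctly: the homotopy is between curves on $[y^-_0,s]$, and the curves need a common parametrisation, for example by $r$ as in \Cref{prop:foc}. Second, one must check that $\gamma$, defined via the chart line, satisfies $\gamma((y^-_0,0))\subset\mathcal M_{\tilde r}$. This follows by shrinking the chart, or $y^-_0$, using $r\circ\gamma\to0$.
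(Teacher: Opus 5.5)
Your proposal is correct and follows the paper's route. The inclusion ``$\subseteq$'' holds because the chart diamond lies in $f_<$ (the paper invokes \Cref{minkremark}), and ``$\supseteq$'' follows by feeding the fixed-endpoint timelike homotopy from \Cref{prop:foc} into \Cref{imageofhomotopy}, using precompactness of the chart diamond. You are in fact more careful than the paper about why $\sigma$ and the pulled-back curves stay in $\mathcal M_{\tilde r}$.

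One small correction. Justify $D_s\subseteq f_<$ by the achronality of $\mathrm{graph}\,f$ (\Cref{minkremark}, since $\tilde x_s\in f_<$ for $s<0$). Do not derive it from the containment in $\overline{C^-_{5/8}\cap(y^-+C^+_{5/8})}$: that closed set contains the chart origin, which is a point of $\mathrm{graph}\,f$.
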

\begin{proof}
The inclusion ``$\,\subseteq\,$'' is an immediate consequence of \Cref{minkremark}. Indeed, if $\sigma$ is a past-directed timelike curve in $\mathbb{R}_{\varepsilon_0,\varepsilon_1}$ from $(s,0,\ldots,0)$ to $y^-$, then \Cref{minkremark} implies that $\tilde{\varphi}^{-1}\circ \sigma$ is entirely contained in $\iota(\mathcal{M}_{\mathrm{int}})$.

For the reverse inclusion ``$\,\supseteq\,$'', let
$
\sigma \colon [y_0^-,s] \to \mathcal{M}_{\tilde r}
$
be a future-directed timelike curve from $\gamma(y_0^-)$ to $\gamma(s)$. \Cref{prop:foc} say that $\left(\mathcal{M}_{\tilde r},g_{\textrm{int}}\vert_{r<\tilde r}\right)$ is future one-connected. Hence, there exists a timelike homotopy
$
\Gamma \colon [0,1]\times [y_0^-,s] \to \mathcal{M}_{\tilde r}
$
with fixed endpoints connecting $\gamma|_{[y_0^-,s]}$ to $\sigma$, i.e., we have $\Gamma(0,\tau):=\gamma|_{[y_0^-,s]}(\tau)$, and $\Gamma(\tau,1):=\sigma(\tau)$. Consequently,
$
\iota\circ\Gamma \colon [0,1]\times [y_0^-,s] \to \tilde{\mathcal{M}}
$
is a timelike homotopy with fixed endpoints in $\tilde{\mathcal{M}}$. Additionally, $\tilde \gamma=\iota\circ \gamma$ maps into $\tilde U$.
Moreover, the set 
$$
I^-\left(\left(s,0,...0\right),\mathbb{R}_{\varepsilon_0,\varepsilon_1}\right)\cap I^+\left(y^-,\mathbb{R}_{\varepsilon_0,\varepsilon_1}\right)
$$
is precompact in $\mathbb{R}_{\varepsilon_0,\varepsilon_1}$ (refer to \eqref{minksetup2}, \eqref{minksetup3}, and \eqref{BottomofMink}). Also we know that $\tilde \phi$ is a homeomorphism from $\tilde U$ to $\mathbb{R}_{\varepsilon_0,\varepsilon_1}$. Therefore, the set 
$$
 \tilde\phi ^{-1}\left[I^-\left(\left(s,0,...0\right),\mathbb{R}_{\varepsilon_0,\varepsilon_1}\right)\cap I^+\left(y^-,\mathbb{R}_{\varepsilon_0,\varepsilon_1}\right)\right], 
$$
is precompact in $\tilde U$.  In other words,
$$
I^-\left(\tilde \gamma(s),\tilde U\right)\cap I^+\left(\tilde \gamma \left(y_0^-\right),\tilde U\right)\Subset \tilde U.
$$

One can now invoke \Cref{imageofhomotopy} to conclude that $\textrm{Im}\left(\iota\circ \Gamma\right)\subset \tilde U$. In particular, $\textrm{Im}\left(\iota \circ \sigma\right)=\textrm{Im}\left(\iota\circ \Gamma(s,1)\right)\subset \tilde U$. This establishes the inclusion ``$\,\supseteq\,$'' and thereby completes the proof of  \Cref{causaldiamondssameeq}.
\end{proof}
It is worth noting that \Cref{causaldiamonds same} is the only part of the entire paper where the future one-connectedness property of the small region of $\mathcal{M}_{\textrm{int}}$ is used. The following definition will be useful in the proof of \Cref{maintheorem}, more specifically in proving \Cref{Ktls}. 

\begin{definition} \label{tlseparates}[Timelike separation, Definition 2.14 in \cite{Sbierski_2018a}]
 For a time-oriented Lorentzian manifold $(\mathcal{M},g)$, given two sets $X,Y\subseteq \mathcal{M}$, are called timelike separated by a set $Z\subseteq \mathcal{M}$ if, and only if, every timelike curve connecting $X$ and $Y$ intersects $Z$. In other words, for any timelike curve $\xi:[0,1]\to \mathcal{M}$ such that $\xi(0)\in X$ and $\xi(1)\in Y$, there exists a $\tau_0\in [0,1]$ such that $\xi(\tau_0)\in Z$.  
\end{definition}

\begin{lemma}\label{Ktls}
  Assume that there exists a $C^0$-extension of $\mathcal{M}_{\textrm{int}}$ through $r\to0$. Let $\gamma:[-\varepsilon_0,0)\to \mathcal{M}_{\textrm{int}}$ be a future-directed and future-inextendible timelike curve as described in the introduction of \Cref{sec4}, and consider the setup of the Minkowskian chart as described in  \Cref{thesetupmainproof}. Choose $y_0^+\in (y_0^-,0)$. 
   Then, the set $K\left(y_0^+,y_0^-\right)$ (as defined in  \Cref{lemmakclcompact}) timelike separates $\gamma\left(\left(y_0^+,0\right)\right)$ and $I^-\left(\gamma\left(y_0^-\right),\mathcal{M}_{\textrm{int}}\right)$ (see  \Cref{tlseparates}).
\end{lemma}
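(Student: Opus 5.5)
We argue directly from the definition of $K(y_0^+,y_0^-)$ in \eqref{Kdefn}, using only the causal structure of $\mathcal{M}_{\textrm{int}}$ and a continuity (first-entry) argument. Let $\xi:[0,1]\to\mathcal{M}_{\textrm{int}}$ be a timelike curve with one endpoint in $\gamma((y_0^+,0))$ and the other in $I^-(\gamma(y_0^-),\mathcal{M}_{\textrm{int}})$. By reversing orientation if needed, we may assume $\xi$ is future-directed. Since $r$ is strictly decreasing along future-directed causal curves (footnote to \Cref{obstruction}), and points of $I^-(\gamma(y_0^-))$ have larger $r$ than points of $\gamma((y_0^+,0))$, the past-directed case cannot occur. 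So we take $\xi(0)\in I^-(\gamma(y_0^-),\mathcal{M}_{\textrm{int}})$ and $\xi(1)=\gamma(\tau^*)$ with $\tau^*\in(y_0^+,0)$.

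The key observation is that every point $\xi(u)$ with $u\in(0,1)$ lies in $I^-(\gamma(\tau^*),\mathcal{M}_{\textrm{int}})$, since $\xi|_{[u,1]}$ is a future-directed timelike curve into $\gamma(\tau^*)$. Moreover $\tau^*>y_0^+>y_0^-$, so these points lie in $\bigcup_{y_0^-<\tau<0}I^-(\gamma(\tau))$. To land in $K$, we therefore only need a parameter $u$ at which $\xi(u)\in I^+(\gamma(y_0^-))\setminus I^+(\gamma(y_0^+))$. Define
\[
u_0:=\inf\{u\in[0,1]:\ \xi(u)\in I^+(\gamma(y_0^+),\mathcal{M}_{\textrm{int}})\}.
\]
This set is nonempty: $\gamma|_{[y_0^+,\tau^*]}$ is timelike, so $\xi(1)=\gamma(\tau^*)\in I^+(\gamma(y_0^+))$. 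Because $I^+$ is open, the set is relatively open in $[0,1]$, so $u_0$ is not attained, i.e.\ $\xi(u_0)\notin I^+(\gamma(y_0^+))$. Also $u_0<1$. If $u_0=0$, then $\xi(u)\in I^+(\gamma(y_0^+))$ for small $u>0$, giving $\gamma(y_0^+)\ll\xi(u)\ll\xi(0)\ll\gamma(y_0^-)$, a closed timelike chain contradicting $y_0^-<y_0^+$ and global hyperbolicity (chronology). Hence $u_0\in(0,1)$.

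It remains to show $\xi(u_0)\in I^+(\gamma(y_0^-))$, and this is the only delicate point, because $\xi$ starts in the \emph{past} of $\gamma(y_0^-)$, not its future. We use that for $u>u_0$ close to $u_0$ we have $\xi(u)\in I^+(\gamma(y_0^+))\subseteq I^+(\gamma(y_0^-))$. Openness of $I^+$ alone does not give membership at the limit $u_0$, and $I^+(\gamma(y_0^-))$ is not closed, so we argue via the closure in the globally hyperbolic (causally simple) spacetime. From $\xi(u_0)\in\overline{I^+(\gamma(y_0^+))}=J^+(\gamma(y_0^+))$ we get $\gamma(y_0^-)\ll\gamma(y_0^+)\le\xi(u_0)$, so $\xi(u_0)\in I^+(\gamma(y_0^-))$ by the push-up property $I\circ J\subseteq I$. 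Combining the three memberships gives $\xi(u_0)\in K(y_0^+,y_0^-)$, which proves the separation.

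The main obstacle is the boundary bookkeeping at $u_0$, namely ensuring $\xi(u_0)$ lies outside the open set $I^+(\gamma(y_0^+))$ yet inside $I^+(\gamma(y_0^-))$. Both follow from openness of $I^\pm$ together with $J^+=\overline{I^+}$ and push-up. These hold in $\mathcal{M}_{\textrm{int}}$ because it is globally hyperbolic, with the hypersurfaces $\{r=\text{const}\}$ Cauchy.
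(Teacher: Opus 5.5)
Your argument is correct and is essentially the paper's proof: your first-entry parameter $u_0$ for the reversed, future-directed curve plays the role of the paper's $\Delta_+$, and in both the key step is $\overline{I^+(\gamma(y_0^+),\mathcal{M}_{\textrm{int}})}=J^+(\gamma(y_0^+),\mathcal{M}_{\textrm{int}})\subseteq I^+(\gamma(y_0^-),\mathcal{M}_{\textrm{int}})$ via push-up. One small slip: when excluding $u_0=0$ you write $\xi(u)\ll\xi(0)$, which is backwards for a future-directed $\xi$; instead note that $u_0=0$ would give $\xi(0)\in J^+(\gamma(y_0^+),\mathcal{M}_{\textrm{int}})\subseteq I^+(\gamma(y_0^-),\mathcal{M}_{\textrm{int}})$, which contradicts $\xi(0)\ll\gamma(y_0^-)$ by chronology.
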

\begin{proof}
Let
$\sigma: [0,1]\to \mathcal{M}_{\mathrm{int}}$
be a past-directed timelike curve such that
$
\sigma(0)\in \gamma\big((y_0^+,0)\big)
\quad\text{and}\quad
\sigma(1)\in I^-\big(\gamma(y_0^-),\mathcal{M}_{\mathrm{int}}\big).
$ We first claim that there exists \(\Delta_- \in (0,1)\) such that
$
\sigma^{-1}\Big(I^+\big(\gamma(y_0^-),\mathcal{M}_{\mathrm{int}}\big)\Big)=[0,\Delta_-).
$
Indeed, since \(\sigma(0)\in \gamma\big((y_0^+,0)\big)\) and \(\gamma(y_0^+)\in I^+(\gamma(y_0^-),\mathcal{M}_{\mathrm{int}})\), it follows that
\(
0\in \sigma^{-1}\Big(I^+\big(\gamma(y_0^-),\mathcal{M}_{\mathrm{int}}\big)\Big).
\)
Moreover, by continuity of \(\sigma\) and openness of \(I^+(\gamma(y_0^-),\mathcal{M}_{\mathrm{int}})\), the set
$
\sigma^{-1}\Big(I^+\big(\gamma(y_0^-),\mathcal{M}_{\mathrm{int}}\big)\Big)
$
is open in \([0,1]\). Since \(\sigma\) is past-directed timelike, this preimage is also an initial interval: if
$
\tau_0\in \sigma^{-1}\Big(I^+\big(\gamma(y_0^-),\mathcal{M}_{\mathrm{int}}\big)\Big),
$
then
$
[0,\tau_0]\subseteq \sigma^{-1}\Big(I^+\big(\gamma(y_0^-),\mathcal{M}_{\mathrm{int}}\big)\Big).
$
Finally, because \((\mathcal{M}_{\mathrm{int}},g_{\mathrm{int}})\) satisfies the chronology condition, the sets
$
I^-\big(\gamma(y_0^-),\mathcal{M}_{\mathrm{int}}\big)
\quad\text{and}\quad
I^+\big(\gamma(y_0^-),\mathcal{M}_{\mathrm{int}}\big)
$
are disjoint. Since \(\sigma(1)\in I^-(\gamma(y_0^-),\mathcal{M}_{\mathrm{int}})\), it follows that \(\Delta_-<1\), proving the claim.

By the same argument, there exists \(\Delta_+\in (0,1)\) such that
$
\sigma^{-1}\Big(I^+\big(\gamma(y_0^+),\mathcal{M}_{\mathrm{int}}\big)\Big)=[0,\Delta_+).
$
We now show that \(\Delta_+<\Delta_-\). Since \((\mathcal{M}_{\mathrm{int}},g_{\mathrm{int}})\) is globally hyperbolic and \(g_{\mathrm{int}}\) is smooth, we have
$\overline{I^+\big(\gamma(y_0^+),\mathcal{M}_{\mathrm{int}}\big)}
=
J^+\big(\gamma(y_0^+),\mathcal{M}_{\mathrm{int}}\big).$
Using moreover that
$
\gamma(y_0^+)\in I^+\big(\gamma(y_0^-),\mathcal{M}_{\mathrm{int}}\big),
$
we obtain
\[
\overline{I^+\big(\gamma(y_0^+),\mathcal{M}_{\mathrm{int}}\big)}
=
J^+\big(\gamma(y_0^+),\mathcal{M}_{\mathrm{int}}\big)
\subseteq
J^+\Big(I^+\big(\gamma(y_0^-),\mathcal{M}_{\mathrm{int}}\big),\mathcal{M}_{\mathrm{int}}\Big)
=
I^+\big(\gamma(y_0^-),\mathcal{M}_{\mathrm{int}}\big).
\]
In particular,
$$
\sigma(\Delta_+)\in \overline{I^+\big(\gamma(y_0^+),\mathcal{M}_{\mathrm{int}}\big)}
\subseteq
I^+\big(\gamma(y_0^-),\mathcal{M}_{\mathrm{int}}\big),
$$
and therefore \(\Delta_+<\Delta_-\). Choose now \(\tau_0\in (\Delta_+,\Delta_-)\). Then
$$
\sigma(\tau_0)\in I^+\big(\gamma(y_0^-),\mathcal{M}_{\mathrm{int}}\big)
\setminus
I^+\big(\gamma(y_0^+),\mathcal{M}_{\mathrm{int}}\big).
$$
On the other hand, since \(\sigma\) is past-directed timelike and \(\sigma(0)\in \gamma((y_0^+,0))\), it is immediate that
$$
\sigma(\tau_0)\in \bigcup_{-\varepsilon_0<\tau<0} I^-\big(\gamma(\tau),\mathcal{M}_{\mathrm{int}}\big).
$$
This implies that $\sigma\left(\tau_0\right)\in K$, and hence completes the proof.
\end{proof}

\begin{lemma}\label{Wtls}
 Assume that there exists a $C^0$-extension of $\mathcal{M}_{\textrm{int}}$ through $r\to0$.  Let $\gamma:[-\varepsilon_0,0)\to \mathcal{M}_{\textrm{int}}$ be a future-directed and future-inextendible timelike curve as described in the introduction of \Cref{sec4},  and consider the setup of the Minkowskian chart as described in  \Cref{thesetupmainproof}. Then, the set 
     $$
     W:= (\tilde{\varphi} \circ \iota)^{-1}\Big(\big[x^+ + C^-_{6/7}\big] \cap \big[x^- + C^+_{6/7}\big]\Big)\subseteq \mathcal{M}_{\textrm{int}}
     $$
 is an open neighborhood of $\bar{K}\left(y_0^+,y_0^-\right)\subseteq\mathcal{M}_{\textrm{int}}$.    
\end{lemma}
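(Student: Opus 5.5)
The plan is to show that $\tilde\varphi\circ\iota$ maps $\bar K(y_0^+,y_0^-)$ into the closed set $\overline{C^-_{5/8}\cap(y^-+C^+_{5/8})}$. By \eqref{minksetup3}, this set sits inside the open diamond $\bigl[x^++C^-_{6/7}\bigr]\cap\bigl[x^-+C^+_{6/7}\bigr]$. Throughout, I read $W$ as the preimage of this diamond under $\tilde\varphi\circ\iota$ restricted to $\iota^{-1}(\tilde U)\cap\mathcal{M}_{\textrm{int}}$. That restricted map is continuous and $\iota^{-1}(\tilde U)$ is open, so $W$ is open in $\mathcal{M}_{\textrm{int}}$. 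The only real content is then the inclusion $\bar K\subseteq W$.

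\emph{Step 1 (inclusion for $K$).} Let $p\in K(y_0^+,y_0^-)$. Then $p\in I^-(\gamma(\tau),\mathcal{M}_{\textrm{int}})\cap I^+(\gamma(y_0^-),\mathcal{M}_{\textrm{int}})$ for some $\tau\in(y_0^-,0)$. Such $\tau$ must indeed exceed $y_0^-$: otherwise $p$ would lie in both the past and the future of $\gamma(y_0^-)$, contradicting chronology of the globally hyperbolic interior.

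To apply \Cref{causaldiamonds same}, we need $\gamma$ to map into $\mathcal{M}_{\tilde r}$. I would ensure this at the outset by shrinking the chart and taking $|y_0^-|$ small, which is possible since $r\circ\gamma\to0$. Then \Cref{causaldiamonds same} gives
\[
\tilde\varphi(\iota(p))\in I^-\bigl((\tau,0,\ldots,0),\mathbb{R}_{\varepsilon_0,\varepsilon_1}\bigr)\cap I^+\bigl(y^-,\mathbb{R}_{\varepsilon_0,\varepsilon_1}\bigr).
\]
By \eqref{diamondsinchart}, this set is contained in $\bigl((\tau,0,\ldots,0)+C^-_{5/8}\bigr)\cap\bigl(y^-+C^+_{5/8}\bigr)$. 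Since $\tau<0$ and $C^-_{5/8}$ is a convex cone containing $-e_0$, we have $(\tau,0,\ldots,0)+C^-_{5/8}\subseteq C^-_{5/8}$. Hence
\[
\tilde\varphi\circ\iota(K)\subseteq C^-_{5/8}\cap\bigl(y^-+C^+_{5/8}\bigr).
\]

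\emph{Step 2 (passing to the closure).} Let $p\in\bar K$, with closure taken in $\mathcal{M}_{\textrm{int}}$, and pick $p_n\in K$ with $p_n\to p$. Then $\iota(p_n)\to\iota(p)$ in $\tilde{\mathcal M}$. By Step 1, the points $\iota(p_n)$ lie in the set $\tilde\varphi^{-1}\bigl(\overline{C^-_{5/8}\cap(y^-+C^+_{5/8})}\bigr)$. This set is compact, because by \eqref{minksetup3} its image under $\tilde\varphi$ is a compact subset of $\mathbb{R}_{\varepsilon_0,\varepsilon_1}$. It follows that $\iota(p)\in\tilde U$ and $\tilde\varphi(\iota(p))\in\overline{C^-_{5/8}\cap(y^-+C^+_{5/8})}$. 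Applying \eqref{minksetup3} again places this point in the open $6/7$-diamond, so $p\in W$. The compactness of $\bar K$ from \Cref{lemmakclcompact} is not strictly needed for this inclusion, but it confirms that $\bar K$ is a legitimate compact subset of $\mathcal{M}_{\textrm{int}}$ and will be used later.

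\emph{Main obstacle.} The delicate point is Step 1, namely legitimately invoking \Cref{causaldiamonds same}. Two things must be checked there. First, $\gamma$ restricted to $[y_0^-,0)$ must lie in the future one-connected region $\mathcal{M}_{\tilde r}$. Second, every $\tau$ appearing in the union defining $K$ must lie in $(y_0^-,0)$, which is handled by the chronology argument above. Everything else is elementary cone geometry in the Minkowskian chart.
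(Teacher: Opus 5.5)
Your proof is correct and follows the paper's route: transfer $K(y_0^+,y_0^-)$ into the chart with \Cref{causaldiamonds same}, bound it by the $5/8$-cones of \eqref{diamondsinchart}, invoke \eqref{minksetup3}, and pass to the closure. The differences are minor. Your use of the convexity of $C^-_{5/8}$ replaces the paper's identity $I^-(\bar 0,\mathbb{R}_{\varepsilon_0,\varepsilon_1})=\bigcup_{y_0^-<\tau<0}I^-((\tau,0,\ldots,0),\mathbb{R}_{\varepsilon_0,\varepsilon_1})$. Your closure step, which deduces $\iota(\bar K)\subseteq\tilde U$ from the compactness of $\tilde\varphi^{-1}\bigl(\overline{C^-_{5/8}\cap(y^-+C^+_{5/8})}\bigr)$, and your check that $\gamma(y_0^-)\in\mathcal{M}_{\tilde r}$ make explicit two points the paper leaves implicit.
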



\begin{proof}
We first prove that 
$$
I^-\big(\bar{0}, \mathbb{R}_{\varepsilon_0,\varepsilon_1}\big) = \bigcup_{y_0^- < \tau < 0} \Big(I^-\big((\tau,0, \ldots, 0), \mathbb{R}_{\varepsilon_0,\varepsilon_1}\big) \Big).
$$ Here $\bar 0$ is the centre of the Minkowskian chart $\mathbb{R}_{\varepsilon_0,\varepsilon_1}$.  The inclusion ``$\supseteq$" is straightforward. For the other way round, let $p\in I^-\big(\bar 0, \mathbb{R}_{\varepsilon_0,\varepsilon_1}\big)$. This implies $\bar 0\in I^+\left(p,\mathbb{R}_{\varepsilon_0,\varepsilon_1}\right)$. Since $I^+(p,\mathbb{R}_{\varepsilon_0,\varepsilon_1})$ is open (see Proposition 2.6 in \cite{Sbierski_2018a}), there exists $\tau$ close to $0$ such that $\tilde\varphi \circ \tilde \gamma(\tau)\in I^+\left(p,\mathbb{R}_{\varepsilon_0,\varepsilon_1}\right)$. This implies $p\in I^-\big((\tau,0, \ldots, 0), \mathbb{R}_{\varepsilon_0,\varepsilon_1}\big)$, since $\tilde\varphi \circ \tilde \gamma(\tau)=(\tau,0, \ldots, 0)$, hence proving the inclusion ``$\subseteq$". We therefore obtain
    \begin{equation}
        I^-\big(\bar 0, \mathbb{R}_{\varepsilon_0,\varepsilon_1}\big) \cap I^+\big(y^-, \mathbb{R}_{\varepsilon_0,\varepsilon_1}\big) = \bigcup_{y_0^- < \tau < 0} \Big(I^-\big((\tau,0, \ldots, 0), \mathbb{R}_{\varepsilon_0,\varepsilon_1}\big) \Big)\cap I^+\big(y^-, \mathbb{R}_{\varepsilon_0,\varepsilon_1}\big).
    \end{equation}
Moreover, together with \Cref{causaldiamonds same}, in particular, \eqref{causaldiamondssameeq}, we have 
\begin{align}\label{Wopennbdeq2}
\begin{split}
 \tilde{\varphi}^{-1} \Big(I^-\big(\bar 0, \mathbb{R}_{\varepsilon_0,\varepsilon_1}\big) \cap I^+\big(y^-, \mathbb{R}_{\varepsilon_0,\varepsilon_1}\big) \Big) 
&= \tilde{\varphi}^{-1}\left(\bigcup_{y_0^{-}<\tau<0}\left(I^-\left(\left(\tau,0,\ldots,0\right), \mathbb{R}_{\varepsilon_0,\varepsilon_1}\right)\right)\cap I^+\left(y^-,\mathbb{R}_{\varepsilon_0,\varepsilon_1}\right)\right)\\
&= \bigcup_{y_0^{-}<\tau<0}\tilde{\varphi}^{-1}\left[I^-\left(\left(\tau,0,\ldots,0\right), \mathbb{R}_{\varepsilon_0,\varepsilon_1}\right)\cap I^+\left(y^-,\mathbb{R}_{\varepsilon_0,\varepsilon_1}\right)\right]\\
&=\bigcup_{y_0^{-}<\tau<0}\iota \left[I^-\left(\gamma(\tau),\mathcal{M}_{\textrm{int}}\right)\cap I^+\left(\gamma(y_0^-),\mathcal{M}_{\textrm{int}}\right)\right]\\
&=\iota \left[\left(\bigcup_{y_0^- < \tau < 0} I^-\left(\gamma(\tau), \mathcal{M}_{\textrm{int}} \right)\right) \cap I^+\big( \gamma(y^-_0), \mathcal{M}_{\textrm{int}}\big) \right]    
\end{split}
\end{align}
It follows from the definition of $K$, in Eq. \eqref{Kdefn}, that $K\left(y_0^+,y_0^-\right) \subseteq \Big(\bigcup_{y_0^- < \tau < 0} I^-\big(\gamma(\tau), \mathcal{M}_{\textrm{int}} \big)\Big) \cap I^+\big( \gamma(y^-_0), \mathcal{M}_{\textrm{int}}\big)$. We thus obtain from \eqref{Wopennbdeq2},
\begin{align}\label{K6/7}
\begin{split}
    \overline{\tilde{\varphi} \big( \iota\left(K\left(y_0^+,y_0^-\right)\right)\big)}  &\subseteq \overline{I^-\big(\bar 0, \mathbb{R}_{\varepsilon_0,\varepsilon_1}\big) \cap I^+\big(y^-, \mathbb{R}_{\varepsilon_0,\varepsilon_1}\big)}\\
    &\subseteq  \overline{C^{-}_{\frac{5}{8}}\cap I^+\left(y^-,\mathbb{R}_{\varepsilon_0,\varepsilon_1}\right)}\\
    &\subseteq \overline{C^{-}_{\frac{5}{8}}\cap \left[y^-+C^+_{\frac{5}{8}}\right]\cap \mathbb{R}_{\varepsilon_0,\varepsilon_1}}\\
    & \subseteq \left[x^++C^-_{\frac{6}{7}}\right]\cap\left[x^-+C^+_{\frac{6}{7}}\right]
\end{split}
\end{align}
In the second step above, we used the second expression of \eqref{diamondsinchart} for $x=\bar 0$ i.e., $I^-\big(\bar 0, \mathbb{R}_{\varepsilon_0,\varepsilon_1}\big)\subseteq C^{-}_{\frac{5}{8}}$, in the third step above, we used the first expression of \eqref{diamondsinchart} for $x=y^-$, i.e., $I^+\big(y^-, \mathbb{R}_{\varepsilon_0,\varepsilon_1}\big)\subseteq \left[y^-+C^+_{\frac{5}{8}}\right]$, and in the final step above, we used \eqref{minksetup3} which is implied by a suitable choice of $y^-\in \mathbb{R}_{\varepsilon_0,\varepsilon_1}$. Additionally, we have the continuity of $\tilde{\varphi}$ and $\iota$, and compactness of $\bar{K}\left(y_0^+,y_0^-\right)\subseteq \mathcal{M}_{\textrm{int}}$, which implies
$$
\tilde{\varphi}\big(\iota(\overline{K}\left(y_0^+,y_0^-\right)) \big) = \overline{\tilde{\varphi} \big( \iota(K\left(y_0^+,y_0^-\right))\big)} \;.
$$
Hence, \eqref{K6/7} then yields that
$$
W:= (\tilde{\varphi} \circ \iota)^{-1}\Big(\big[x^+ + C^-_{\frac{6}{7}}\big] \cap \big[x^- + C^+_{\frac{6}{7}}\big]\Big) \subseteq \mathcal{M}_{\textrm{int}}
$$
is an open neighbourhood of $\overline{K}\left(y_0^+,y_0^-\right) \subseteq \mathcal{M}_{\textrm{int}}$, thereby concluding the proof.
\end{proof}

\begin{lemma}\label{Wtls2}
 Assume that there exists a $C^0$-extension of $\mathcal{M}_{\textrm{int}}$ through $r\to0$. Let $\gamma:[-\varepsilon_0,0)\to \mathcal{M}_{\textrm{int}}$ be a future-directed and future-inextendible timelike curve as described in \Cref{sec4}, and consider the setup of the Minkowskian chart as described in  \Cref{thesetupmainproof}. Let $(\mathcal{F},h)$ be a codimension two compact homogeneous Riemannian manifold. Then, there exists a $\mu>0$ such that $W$ timelike separates $I^{+}(\gamma(-\mu),\mathcal{M}_{\textrm{int}})$ and $\gamma(x_0^-)$, where $W$ is as defined in \Cref{Wtls}.
\end{lemma}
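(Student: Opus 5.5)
The plan is to follow Step 2 of Sbierski's argument in \cite{Sbierski_2018}. We use the isometries of $\mathcal{M}_{\textrm{int}}$ generated by $t$-translations and by isometries of the homogeneous fibre. They let us transport any curve starting in $I^+(\gamma(-\mu),\mathcal{M}_{\textrm{int}})$ to a curve starting \emph{on} $\gamma$, and for such curves \Cref{Ktls} already provides separation by $K$. Throughout, fix $y_0^+\in(y_0^-,0)$ as in \Cref{Ktls}. Then $\gamma(x_0^-)\in I^-(\gamma(y_0^-),\mathcal{M}_{\textrm{int}})$, since $x_0^-<y_0^-$, and $\overline{K}:=\overline{K}(y_0^+,y_0^-)$ is compact by \Cref{lemmakclcompact}. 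By \Cref{Wtls}, $W$ is an open neighbourhood of $\overline{K}$.

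\textbf{Step 1: choice of tolerances.} For $\lambda\in\mathbb{R}$ and $f\in G\subseteq\textrm{Isom}(\mathcal F,h)$, let $\Phi_{\lambda,f}(t,r,y):=(t+\lambda,r,f(y))$; this is an isometry of $(\mathcal{M}_{\textrm{int}},g_{\textrm{int}})$.
\begin{itemize}
\item Since $\overline{K}$ is compact and $W\supseteq\overline{K}$ is open, there is $\delta_1>0$ such that $\Phi_{\lambda,f}(\overline{K})\subseteq W$ whenever $|\lambda|<\delta_1$ and $\sup_{y}d_h(f(y),y)<\delta_1$. This uses a uniform neighbourhood argument in the coordinates $(t,r,y)$: a compact set has positive distance to the complement of an open set containing it.
\item Since $I^-(\gamma(y_0^-),\mathcal{M}_{\textrm{int}})$ is open and contains $\gamma(x_0^-)$, shrinking $\delta_1$ further ensures that $\Phi^{-1}_{\lambda,f}(\gamma(x_0^-))\in I^-(\gamma(y_0^-),\mathcal{M}_{\textrm{int}})$ for all such $(\lambda,f)$.
\item Apply \Cref{lemmaofisometry} with $\delta=\delta_1$ to obtain $\epsilon\le\delta_1$.
\end{itemize}

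\textbf{Step 2: choice of $\mu$.} Reparametrise $\gamma$ by $s=-r$ and apply \Cref{lemma1} and \Cref{corollary1} with this $\epsilon$. This yields $\mu>0$ such that $-\mu>y_0^+$, and such that every $q\in I^+(\gamma(-\mu),\mathcal{M}_{\textrm{int}})$ satisfies
\[
|q_t-\gamma_t(\tau_q)|<\epsilon,\qquad d_h(q_y,\gamma_y(\tau_q))<\epsilon .
\]
Here $\tau_q\in(-\mu,0)$ is the unique parameter with $\gamma_r(\tau_q)=q_r$. It exists because $r$ is strictly decreasing along future causal curves and $\gamma_r\to0$. To get this estimate, apply \Cref{lemma1} both to $\gamma$ and to a timelike curve from $\gamma(-\mu)$ through $q$, and use the triangle inequality, halving $\epsilon$ if necessary.

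\textbf{Step 3: the separation argument.} Let $\sigma$ be a timelike curve from $q\in I^+(\gamma(-\mu),\mathcal{M}_{\textrm{int}})$ to $\gamma(x_0^-)$; it is necessarily past-directed. Set $\lambda:=q_t-\gamma_t(\tau_q)$. By \Cref{lemmaofisometry} there is $f\in G$ with $f(\gamma_y(\tau_q))=q_y$ and $\sup_y d_h(f(y),y)<\delta_1$. Put $\Phi:=\Phi_{\lambda,f}$, so that $\Phi(\gamma(\tau_q))=q$. Then $\Phi^{-1}\circ\sigma$ is a past-directed timelike curve with the following endpoints:
\begin{itemize}
\item its initial point is $\gamma(\tau_q)\in\gamma((y_0^+,0))$;
\item its final point $\Phi^{-1}(\gamma(x_0^-))$ lies in $I^-(\gamma(y_0^-),\mathcal{M}_{\textrm{int}})$ by Step 1.
\end{itemize}
By \Cref{Ktls}, $\Phi^{-1}\circ\sigma$ meets $K$. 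Hence $\sigma$ meets $\Phi(K)\subseteq\Phi(\overline{K})\subseteq W$, which is the claim.

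\textbf{Main obstacle.} The delicate point is Step 1, specifically the uniform smallness of $\Phi$ on $\overline{K}$. We need that translation by $\lambda$ combined with a fibre isometry $C^0$-close to the identity displaces every point of the compact set $\overline{K}$ by less than its distance to $\mathcal{M}_{\textrm{int}}\setminus W$. The approach is to express this in a fixed product metric $dt^2+dr^2+h$ on $\mathcal{M}_{\textrm{int}}$, in which $\Phi_{\lambda,f}$ moves each point by at most $|\lambda|+\sup_y d_h(f(y),y)$. A secondary check is that $\tau_q$ really lies in $(y_0^+,0)$, which is why $\mu$ is chosen with $-\mu>y_0^+$.
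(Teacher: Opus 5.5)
Your proposal is correct and follows essentially the same route as the paper. Both arguments take a uniform margin around the compact set $\overline{K}(y_0^+,y_0^-)$ inside $W$ and around $\gamma(x_0^-)$ inside $I^-(\gamma(y_0^-),\mathcal{M}_{\textrm{int}})$, pick $\mu<-y_0^+$ via \Cref{corollary1} and \Cref{lemmaofisometry}, and transport the curve by a $t$-translation composed with a fibre isometry so that it starts on $\gamma((y_0^+,0))$, where \Cref{Ktls} applies; your $\Phi^{-1}\circ\sigma$ plays the role of the paper's $\hat\sigma$. Your tolerance bookkeeping (separate bounds on $|\lambda|$ and on the fibre displacement, with $\epsilon\le\delta_1$) is in fact slightly cleaner than the paper's, which uses the same radius $\delta$ for $\overline{K}_\delta$ and $B_\delta(\gamma(x_0^-))$ even though the transported curve is displaced by up to $\epsilon+\delta$.
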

\begin{figure}[h]\label{wtlsfig}
    \centering
\includegraphics[width=0.5\linewidth]{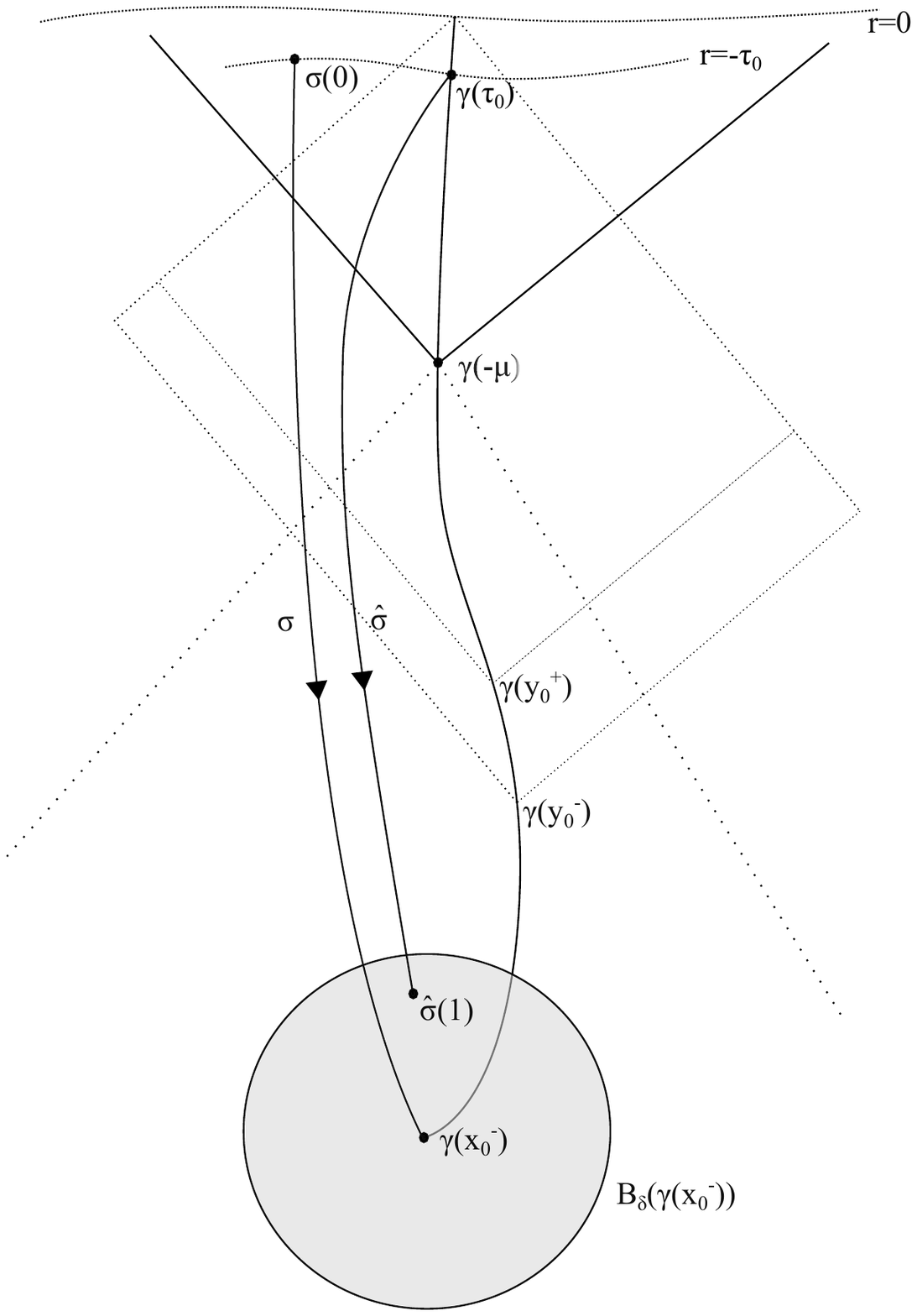}
    \caption{A pictorial representation of the proof depicting that $W$ timelike separates $I^{+}(\gamma(-\mu),\mathcal{M}_{\textrm{int}})$ and $\gamma(x_0^-)$, where $W$ is as defined in \Cref{Wtls}.}
\end{figure}
\begin{proof}
 Consider the metric $d_{\mathcal{M}_{\mathrm{int}}} : \mathcal{M}_{\mathrm{int}} \times \mathcal{M}_{\mathrm{int}} \to [0,\infty)$ defined by
\begin{equation}\label{distanceWtlslemma}
d_{\mathcal{M}_{\mathrm{int}}}\big((t_1,r_1,y_1),(t_2,r_2,y_2)\big)
:= |t_1-t_2| + |r_1-r_2| + d_{h}(y_1,y_2),
\end{equation}
for $(t_i,r_i,y_i)\in\mathcal{M}_{\mathrm{int}}$, $i=1,2$. The function
$$
\mathcal{M}_{\mathrm{int}} \ni (t,r,y)\mapsto d_{\mathcal{M}_{\mathrm{int}}}\big((t,r,y),\mathcal{M}_{\mathrm{int}}\setminus W\big)
:= \displaystyle \inf_{(t',r',y')\in \mathcal{M}_{\mathrm{int}}\setminus W}
d_{\mathcal{M}_{\mathrm{int}}}\big((t,r,y),(t',r',y')\big)
$$
is continuous. Since $\overline{K}\left(y_0^+,y_0^-\right)$ is compact (from \Cref{lemmakclcompact}) and disjoint from the closed set $\mathcal{M}_{\mathrm{int}}\setminus W$, this function attains its minimum on $\overline{K}\left(y_0^+,y_0^-\right)$, and that minimum is strictly positive. Therefore, there exists $\delta>0$ such that
$\overline{K}_\delta\left(y_0^+,y_0^-\right)
:= \left\{(t,r,y)\in\mathcal{M}_{\mathrm{int}}
\;\middle|\;
d_{\mathcal{M}_{\mathrm{int}}}\big((t,r,y),\overline{K}\big)<\delta
\right\}
\subseteq W.
$ 
By decreasing $\delta$ slightly if necessary, we may further assume that
\begin{equation}
\label{gammax0mdeltaball}
B_\delta\big(\gamma(x_0^-)\big)\subseteq I^-\big(\gamma(y_0^-),\mathcal{M}_{\mathrm{int}}\big),
\end{equation}
where $\delta$ is the radius with respect to the metric $d_{\mathcal{M}_{\textrm{int}}}$. 

 Let $G$ be a connected Lie subgroup of $\textrm{Isom}(\mathcal F,h)$ acting transitively on the compact manifold $\mathcal{F}$ (if $G$ is not connected, then consider that connected component which contains the identity element). Then, for $\delta>0$ as chosen above, \Cref{lemmaofisometry} says that there exists an $\epsilon(\delta)>0$ such that for $y_1,y_2\in \mathcal{F}$, 
\begin{equation}\label{deltaisometry}
    d_{h}\left(y_1,y_2\right)<\epsilon(\delta)\implies ~\exists~f\in G:~f(y_1)=y_2~ \textrm{and}~\sup_{y\in\mathcal{F}}d_{h}\left(fy,y\right)<\delta.
\end{equation}
From \Cref{corollary1} and using the triangle inequality, for this $\epsilon(\delta)>0$, there exists a $\mu (\epsilon(\delta) )\in(0,-y_0^+)$ such that for all $(t_1,r_1, y_1), (t_2,r_2, y_2) \in I^+\big(\gamma(-\mu),\mathcal{M}_{\textrm{int}}\big)$, we have
\begin{equation}
\label{ChoiceMu}
|t_1 - t_2| < \epsilon \qquad \textnormal{ and } \qquad d_{h}\big(y_1, y_2\big) < \epsilon \;.
\end{equation}
Once such a $\mu(\epsilon(\delta))>0$ is fixed, consider an arbitrary past-directed timelike curve $\sigma: [0,1]\to \mathcal{M}_{\textrm{int}}$ with $\sigma(0) \in I^+\big(\gamma(-\mu),\mathcal{M}_{\textrm{int}}\big)$ and $\sigma (1) = \gamma(x^-_0)$. Also let $\tau_0 \in (-\mu ,0)$ be such that $\gamma_r(\tau_0) = \sigma_r (0)$. From the second inequality of \eqref{ChoiceMu}, we have 
$d_{h}\big(\sigma_y(0), \gamma_y(\tau_0)\big) < \epsilon.$ Using \eqref{deltaisometry}, we have the existence of $f\in G$ such that $f(\sigma_y(0))=\gamma_y(\tau_0)$, and   $d_{h}\left(f\left(\sigma_y(1)\right),\sigma_y(1)\right)<\delta$. Now, we define a curve $\hat{\sigma} : [0,1] \to \mathcal{M}_{\textrm{int}}$, given by 
$$
\hat{\sigma}(\tau) = \Big(\sigma_t(\tau) + [\gamma_t(\tau_0) - \sigma_t(0)], \sigma_r(\tau), f\big(\sigma_y(\tau)\big)\Big).
$$
$\hat \sigma$ is past directed timelike with $\hat{\sigma}(0) = \gamma(\tau_0)$ and, using the second part of \eqref{ChoiceMu}, together with \eqref{gammax0mdeltaball}, we have  $\hat{\sigma}(1) \in I^-\big(\gamma(y^-_0),\mathcal{M}_{\textrm{int}}\big)$. Since  $K\left(y_0^+,y_0^-\right)$ (as defined in \Cref{lemmakclcompact}) timelike separates $\gamma\left(\left(y_0^+,0\right)\right)$ and $I^-\left(\gamma\left(y_0^-\right),\mathcal{M}_{\textrm{int}}\right)$ according to  \Cref{Ktls}, there exists an $\hat{\tau} \in [0,1]$ with $\hat{\sigma}(\hat{\tau}) \in K$. It now follows that $\sigma(\hat{\tau}) \in \overline{K}_\delta\left(y_0^+,y_0^-\right)\subset W$, thereby concluding the proof.
\end{proof}
Once we have all the Lemmas as above one can proceed to prove \Cref{statementA} and \Cref{statementB} similar to the arguments in the case of Schwarzschild spacetime. 

\begin{proof}[The proof of \Cref{statementA} and \Cref{statementB}]
    We first show that
$$
\iota \big(I^+(\gamma(-\mu), \mathcal{M}_{\textrm{int}})\big)
\subseteq
\tilde{\varphi}^{-1}\Big(\big[x^+ + C^-_{\frac{6}{7}}\big] \cap \big[x^- + C^+_{\frac{6}{7}}\big]\Big).
$$
Arguing by contradiction, let $\sigma : [0,1] \to \mathcal{M}_{\textrm{int}}$ be a future-directed timelike curve with $\sigma(0)=\gamma(-\mu)$, and suppose that there exists $\tilde{\tau}\in[0,1]$ such that
$$
(\tilde{\varphi}\circ\iota\circ\sigma)(\tilde{\tau})
\notin
\big[x^+ + C^-_{\frac{6}{7}}\big]\cap \big[x^- + C^+_{\frac{6}{7}}\big].
$$
Define
$$
\tau_0:=\sup\Big\{\tau'\in[0,1]\;\big|\;(\tilde{\varphi}\circ\iota\circ\sigma)(\tau)\in \big[x^+ + C^-_{\frac{6}{7}}\big]\cap \big[x^- + C^+_{\frac{6}{7}}\big]\ \text{for all }\tau\in[0,\tau')\Big\}.
$$
Then clearly $0<\tau_0\leq 1$, and our assumption implies that
$$
(\tilde{\varphi}\circ\iota\circ\sigma)(\tau_0)\in \partial\Big(\big[x^+ + C^-_{\frac{6}{7}}\big]\cap \big[x^- + C^+_{\frac{6}{7}}\big]\Big).
$$
Since every vector in $C^-_{\frac{5}{6}}$ is past-directed timelike, we can choose a past-directed timelike curve $\xi:[0,1]\to\mathbb{R}_{\epsilon_1,\epsilon_2}$ with 
$\xi(0)=(\tilde{\varphi}\circ\iota\circ\sigma)(\tau_0)$ and $\xi(1)=x^-$
such that $\xi$ does not intersect $\big[x^+ + C^-_{\frac{6}{7}}\big]\cap \big[x^- + C^+_{\frac{6}{7}}\big]$; for instance, $\xi$ may be chosen to lie in
$
\partial\Big(\big[x^+ + C^-_{\frac{6}{7}}\big]\cap \big[x^- + C^+_{\frac{6}{7}}\big]\Big).
$
By the first part of \Cref{minkremark}, the image of $\xi$ is contained in $\tilde{\varphi}\big(\iota(\mathcal{M}_{\textrm{int}})\cap \tilde{U}\big)$. Consequently,
$
(\tilde{\varphi}\circ\iota)^{-1}\circ\xi
$
is a past-directed timelike curve in $\mathcal{M}_{\textrm{int}}$ satisfying
$$
\big((\tilde{\varphi}\circ\iota)^{-1}\circ\xi\big)(0)=\sigma(\tau_0)\in I^+\big(\gamma(-\mu),\mathcal{M}_{\textrm{int}}\big) \quad\textrm{and}\quad \big((\tilde{\varphi}\circ\iota)^{-1}\circ\xi\big)(1)=\gamma(x^-_0),
$$
and whose image does not intersect
$
W=(\tilde{\varphi}\circ\iota)^{-1}\Big(\big[x^+ + C^-_{\frac{6}{7}}\big]\cap \big[x^- + C^+_{\frac{6}{7}}\big]\Big).
$
This contradicts \Cref{Wtls2}. We therefore conclude that
$$
\iota \big(I^+(\gamma(-\mu), \mathcal{M}_{\textrm{int}})\big)
\subseteq
\tilde{\varphi}^{-1}\Big(\big[x^+ + C^-_{\frac{6}{7}}\big]\cap \big[x^- + C^+_{\frac{6}{7}}\big]\Big),
$$
which in particular proves statement \Cref{statementA}. Finally, statement \Cref{statementB} follows from \eqref{BottomofMink} together with
$
y_0^-<y_0^+<-\mu<0.
$
\end{proof}

\subsection{The proof of \Cref{statementC}}\label{sec4.3}
The proof of \Cref{statementC} can be adopted as it is in the case of maximal analytic extension of Schwarzschild spacetime from \cite{Sbierski_2018}. Hence, we only sketch the main idea. We claim that there exists a constant $C_d>0$ such that, for every Cauchy hypersurface $\Sigma$ of $\mathcal{M}_{\mathrm{int}}$, the distance in $\Sigma$ between any two points of $I^+(\gamma(-\mu),\mathcal{M}_{\mathrm{int}})\cap\Sigma$ is bounded above by $C_d$.

\subsubsection*{Sketch of the proof} 

Assume first that $\gamma(-\mu)\in I^-(\Sigma,\mathcal{M}_{\mathrm{int}})$, since otherwise there is nothing to prove. For every $x\in(-\epsilon_1,\epsilon_1)^d$, the vertical curve
$
\sigma_x:(-\epsilon_0,f(x))\to\mathcal{M}_{\mathrm{int}}$ given by  $\sigma_x(\tau):=(\tilde \varphi\circ \iota)^{-1}(\tau,x),
$
is future-directed timelike, future-inextendible, and starts in $I^-(\Sigma,\mathcal{M}_{\mathrm{int}})$. Because $\Sigma$ is a Cauchy hypersurface, each such curve intersects $\Sigma$ exactly once, i.e. at $\tau=\hbar(\underline{x})$. 
This defines the function
$
\hbar:(-\epsilon_1,\epsilon_1)^d\to(-\epsilon_0,\epsilon_0).
$
Moreover, one can show that $\hbar$ is smooth. Indeed, $\tilde\varphi(\iota(\Sigma)\cap\tilde U)$ is a smooth hypersurface of $\mathbb{R}_{\varepsilon_0,\varepsilon_1}$, and the timelike vector field $\partial_0$ is nowhere tangent to it since $\Sigma$ is a Cauchy surface. Hence, the implicit function theorem applies, proving $\hbar$ is smooth. 

Next, using the uniform bounds on the metric coefficients $\tilde g_{ij}$ of the Minkowskian chart, it follows that the derivatives (their absolute values) $\vert\partial_i \hbar\vert$ are uniformly bounded $\forall~\underline{x}\in(-\epsilon_1,\epsilon_1)^d$ (uniform because the bound does not depend on $\hbar$).  Now define $\omega: (-\epsilon_0,\epsilon_1)^d\to \mathbb{R}_{\varepsilon_0,\varepsilon_1}$ by $\omega(x):=(\hbar(x),x)$. This map parametrises a smooth submanifold $\tilde S$ of $\mathbb{R}_{\varepsilon_0,\varepsilon_1}$The ambient metric $\tilde g$ on $\mathbb{R}_{\varepsilon_0,\varepsilon_1}$ induces a metric $\bar g$ on $\tilde S$. Additionally,  via $\left(\tilde \varphi \circ \iota \right)^{-1}$, the set $\tilde S$ is isometric to an open subset $S$ of $\Sigma$. Consequently,  using the uniform bounds on the metric coefficients $\tilde g_{ij}$ in the Minkowskian chart along with the uniform bound on the uniform boundedness of $\vert\partial_i \hbar\vert$, the coefficients of the induced metric on $S$, i.e. $\bar g_{ij}(\underline{x})$ are also bounded uniformly, independently of the choice of $\Sigma$. 

Now let $p,q\in I^+(\gamma(-\mu),\mathcal{M}_{\mathrm{int}})\cap\Sigma$ be arbitrary points. By  \Cref{statementA}, there exist $x,y\in(-\epsilon_1,\epsilon_1)^d$ such that $\omega(x)=\tilde\varphi(\iota(p))$ and $\omega(y)=\tilde\varphi(\iota(q))$. We connect $x$ and $y$ by the Euclidean straight line segment $\sigma:[0,1]\to (-\epsilon_1,\epsilon_1)^d$, given by $\sigma(s)=\underline{x}+s\left(\underline{y}-\underline{x}\right)$. One can show that the length $L_{\bar g}\left(\omega\circ\sigma\right)$ is bounded from above, and is independent of the points $\underline{x}$ and $\underline{y}$,  and even the Cauchy surface $\Sigma$. Additionally, it follows that  $\left(\tilde \phi \circ \iota\right)^{-1}\circ \omega\circ \sigma$ is a smooth curve in $\Sigma$ joining $p$ and $q$ whose length is bounded by the same bound as that of $L_{\bar g}\left(\omega\circ\sigma\right)$. Therefore, there exists a constant $C_d$ that uniformly bounds the distance (from above) between any two points, and on any chosen Cauchy surface.

\section{Proof of  \Cref{eitheror} -- The timelike geodesic dichotomy}\label{sec5}

In Theorem A2 of \cite{Sbierski_2021}, Sbierski established a general criterion ensuring timelike geodesic completeness of the exterior region of a spherically symmetric black hole spacetime. More precisely, he gave a sufficient condition under which every future-directed timelike geodesic that is future-inextendible and remains entirely within the exterior region $\mathcal{M}_{\textrm{ext}}$ is future-complete. Here, first we reproduce an analogous statement for a class of spacetimes--see \eqref{spacetimetheorem2}--that generalises the warped-product black hole spacetimes of  \Cref{sec2.2}, in \Cref{eitherorprop1}. We then combine this result with \Cref{eitherorprop2}, which shows that future incompleteness of an affinely parametrised, future-directed, future-inextendible timelike geodesic can occur only by approaching the singularity. Together, these two propositions yield \Cref{eitheror}. We note that the class of spacetimes in \Cref{eitherorprop1} is more general than what is required for our purpose.

To begin with, let $(\mathcal{F},h)$ be a complete smooth Riemannian manifold (implied if it is compact, and without boundary), and let $(\mathcal{M},g)$ be a $(d+1)$-dimensional ($d\geq3$) spacetime such that 
\begin{equation}
    \begin{split}\label{spacetimetheorem2}
    \mathcal{M} &=[0,\infty)\times(0,\infty)\times \mathcal{F},\\
    g &=-f(v,r)\,dv^2+\frac{l(v,r)}{2}(dv\otimes dr+dr\otimes dv)+r^2h.
\end{split}
\end{equation}
We write points of $\mathcal{M}$ as $(v,r,y)$, where $v\in[0,\infty)$ and $r\in(0,\infty)$ are the standard $(v,r)$ coordinates, and $y\in \mathcal{F}$. Here, $f,l:[0,\infty)\times(0,\infty)\to\mathbb R$ are smooth. Fix the time orientation by declaring $-\partial_r$ to be future-directed null. Let
$
r_{H}:[0,\infty)\to(c_{H},\infty)
$
be a continuous function, where $c_{H}>0$. Define the black hole horizon hypersurface
$
\mathcal{H}:=\{(v,r,y)\in\mathcal M:r=r_{H}(v)\},
$
and the exterior region
$
\mathcal{M}_{\mathrm{ext}}:=\{(v,r,y)\in \mathcal{M}:r>r_{H}(v)\}.
$
The setup in \eqref{spacetimetheorem2} describes a class of spacetimes that generalises the class introduced in \Cref{sec2.2}. Indeed, the metric tensor in \eqref{spacetimetheorem2} reduces to the metric in \eqref{sssstefcoordinate}, which is the metric \eqref{sssst} written in Eddington--Finkelstein coordinates, when
$$
f(v,r)
=
\exp{\left(-2\psi(r)\right)}
\left(1-\frac{2m(r)}{r}\right),
\qquad
l(v,r)=2\exp{\left(-\psi(r)\right)} .
$$

In Theorem A2 of \cite{Sbierski_2021}, 
Sbierski exploits the spherical symmetry of the spacetimes there, and the argument is reduced to the equatorial plane $\theta=\pi/2$, where $\theta$ denotes the usual polar coordinate.  A key ingredient is the existence of a first integral along timelike geodesics, arising from the axisymmetric Killing vector field $\partial_\phi$, with $\phi$ the standard azimuthal coordinate.

In the absence of spherical symmetry, such a conserved quantity generally no longer exists. Nevertheless, we show here that the warped product structure of the spacetime gives rise to a quantity that is constant along geodesics, and is associated to the metric tensor $h$ of the fibre $\mathcal{F}$. This is stated in the following lemma.

\begin{lemma}\label{conservedquantityforh}
Let
$
\rho(s) = (\rho_v(s),\rho_r(s),\rho_y(s))
$
be an affinely parametrised geodesic for the metric in the setup described in \eqref{spacetimetheorem2}. Then
$
K := \rho_r^4 h(Y,Y)
$
is a non-negative finite-valued constant along $\rho$, where $Y\in \Gamma(T\mathcal{F})$ is the projection of $\dot \rho$ on $\mathcal{F}$.
\end{lemma}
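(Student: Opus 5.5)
We prove that $K=\rho_r^4 h(Y,Y)$ is constant by writing out the geodesic equations for the fibre components of $\rho$ in the warped-product metric \eqref{spacetimetheorem2} and differentiating $K$ along $\rho$. Non-negativity and finiteness are immediate: $h$ is Riemannian, so $h(Y,Y)\ge 0$, and $\rho_r(s)\in(0,\infty)$ together with smoothness of $\rho$ makes $K(s)$ a finite number for each $s$.

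To get the fibre equations, work in local coordinates $y^a$ on $\mathcal F$. The only metric components involving fibre indices are $g_{ab}=r^2h_{ab}(y)$, which depend only on $(r,y)$; there are no mixed components $g_{va}$ or $g_{ra}$. The Christoffel symbols with an upper fibre index are then
\[
\Gamma^a_{bc}={}^{h}\Gamma^a_{bc},\qquad \Gamma^a_{rb}=\Gamma^a_{br}=\frac{1}{r}\delta^a_b,
\]
and every other symbol $\Gamma^a_{\mu\nu}$ vanishes. In particular $\Gamma^a_{vv}$, $\Gamma^a_{vr}$, $\Gamma^a_{rr}$ and $\Gamma^a_{vb}$ are zero, because $f$ and $l$ do not depend on $y$ and $g_{ab}$ does not depend on $v$. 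The fibre geodesic equation therefore reads
\[
\frac{D^h Y}{ds}=-\frac{2\dot\rho_r}{\rho_r}\,Y,
\]
where $D^h/ds$ is the covariant derivative of the Levi-Civita connection of $h$ along the projected curve $\rho_y$, and $Y=\dot\rho_y$. This is the only point needing care: one must check that no $v$- or $r$-dependent terms leak into the fibre equation. That follows from the block structure $g=g_{\text{base}}(v,r)+r^2h$, exactly as in the standard warped-product formula $\nabla_X V=\nabla_V X=(X(r)/r)V$ for base $X$ and fibre $V$ (O'Neill, Chap.~7). The argument is local on $\mathcal F$, so it holds along all of $\rho$.

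Since $D^h$ is $h$-compatible,
\[
\frac{d}{ds}h(Y,Y)=2h\!\left(\frac{D^hY}{ds},Y\right)=-\frac{4\dot\rho_r}{\rho_r}h(Y,Y).
\]
Hence
\[
\frac{dK}{ds}=4\rho_r^3\dot\rho_r\,h(Y,Y)+\rho_r^4\left(-\frac{4\dot\rho_r}{\rho_r}\right)h(Y,Y)=0,
\]
so $K$ is constant along $\rho$.

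This is the warped-product analogue of the angular-momentum integral: equivalently, $\rho_r^2\,\|Y\|_h$ is conserved.
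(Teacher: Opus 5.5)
Your proof is correct and follows the paper's argument. The paper obtains the same fibre equation $D^hY/ds=-2(\dot\rho_r/\rho_r)\,Y$ by projecting $\nabla_{\dot\rho}\dot\rho$ onto $T\mathcal{F}$ with O'Neill's warped-product connection formulas (Chap.~7, Prop.~35) rather than by listing Christoffel symbols, and then differentiates $\rho_r^4h(Y,Y)$ exactly as you do; your coordinate computation is simply a more explicit check of that projection step.
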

\begin{proof}
Write $\dot\rho = U + Y$, where $U:=\dot \rho_v\,\partial_v + \dot\rho_r\,\partial_r$, and $Y(s)\in T_{\rho_y(s)}\mathcal{F}$. We have 
$$
\nabla_{\dot \rho}\dot \rho=\nabla_{\left(U+Y\right)}\left(U+Y\right)=\nabla_UU+\nabla_YU+\nabla_UY+\nabla_YY.
$$
Since the metric splits as a $(v,r)$-part plus the warped
fibre part $r^2 h$, the projection of $\nabla_{\dot \rho}\dot \rho$ on $\Gamma\left(T\mathcal{F}\right)$ is (see for e.g. \S 7, Proposition 35 in \cite{oneill})
$$
\bigl(\nabla_{\dot\rho}\dot\rho\bigr)^{\mathcal{F}}
=
\nabla^{\mathcal{F}}_{Y} Y + 2\frac{\dot \rho_r}{\rho_r}Y.
$$
Because $\rho$ is a geodesic in $\mathcal{M}$, $\nabla_{\dot\rho}\dot\rho=0$, hence its fibre component also satisfies
$$
\nabla^{\mathcal{F}}_Y Y + 2\frac{\dot \rho_r}{\rho_r}Y = 0.
$$
Next consider
$$
\frac{d}{ds}h(Y,Y)
=
2h(\nabla^{\mathcal{F}}_Y Y,Y)
=
-4\frac{\dot \rho_r}{\rho_r}h(Y,Y),
$$
where we used metric compatibility of the Levi-Civita connection of $h$. Therefore,
\begin{align*}
\frac{d}{ds}\bigl(\rho_r^4 h(Y,Y)\bigr)
&=
4\rho_r^3\dot \rho_r\, h(Y,Y)
+
\rho_r^4 \frac{d}{ds}h(Y,Y) \\
&=
4\rho_r^3\dot \rho_r\, h(Y,Y)
+
\rho_r^4\left(-4\frac{\dot \rho_r}{\rho_r}h(Y,Y)\right) \\
&= 0.
\end{align*}
Hence $\rho_r^4 h(Y,Y)$ is non-negative and finite-valued constant along $\rho$.
\end{proof}

We are now in a position to prove the statement analogous to Theorem A2 in \cite{Sbierski_2021}, but without assuming spherical symmetry. The only structural assumption required is that the spacetime admits the warped product form \eqref{spacetimetheorem2}, together with the additional assumption that the fibre $(\mathcal{F},h)$ is complete, besides some constraints on the metric tensor coefficients $f$ and $l$.

\begin{proposition}\label{eitherorprop1}
Let $(\mathcal{M},g)$ be as described in \eqref{spacetimetheorem2}. Assume on $\mathcal{M}_{\mathrm{ext}}$ that
\begin{equation}\label{exttheorembounds}
    |f|\le C_f,\qquad 0<c_l\le l\le C_l,\qquad 0\le \partial_r l,\qquad
f.\partial_r l-l.\partial_r f\le 0.
\end{equation}
Let $\rho:[0,b)\to \mathcal{M}$ be an affinely parametrised, future-directed, and future-inextendible timelike geodesic with $\rho(0)\in \mathcal{M}_{\mathrm{ext}}$, where $b\in(0,\infty]$. Then
\begin{enumerate}[(i)]
\item $\rho$ is future complete, or
\item there exists $s_0\in[0,b)$ such that $\rho(s_0)\in \mathcal{H}$.
\end{enumerate}
\end{proposition}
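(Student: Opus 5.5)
We argue by contradiction, in the spirit of Theorem A2 of \cite{Sbierski_2021}, with the conserved quantity of \Cref{conservedquantityforh} replacing the angular momentum. Suppose $b<\infty$ and $\rho(s)\in\mathcal{M}_{\mathrm{ext}}$ for all $s\in[0,b)$; we aim to show that $\rho$ then extends beyond $b$, contradicting future-inextendibility. Since $\rho$ never meets $\mathcal{H}$ and $\rho(0)\in\mathcal{M}_{\mathrm{ext}}$, continuity gives $\rho_r(s)>r_H(\rho_v(s))>c_H$ throughout. The plan is to show that $\dot\rho_v,\dot\rho_r$ and $\sqrt{h(Y,Y)}$ stay bounded on $[0,b)$. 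Then $\rho$ has finite coordinate length, so $\rho_v,\rho_r$ converge and, by completeness of $(\mathcal F,h)$, $\rho_y$ converges. The velocity also stays bounded, so the geodesic ODE (whose Christoffel symbols are bounded near the limit point) lets us extend $\rho$ past $b$. We must also exclude $\rho_r\to\infty$ in finite time, which the boundedness of $\dot\rho_r$ does.

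\textbf{Fibre component and normalisation.} By \Cref{conservedquantityforh}, $\rho_r^4h(Y,Y)=K$, so $h(Y,Y)\le K/c_H^4$ is bounded. The normalisation $g(\dot\rho,\dot\rho)=-1$ (after rescaling) reads
\[
-f\dot\rho_v^2+l\,\dot\rho_v\dot\rho_r+\rho_r^2h(Y,Y)=-1 .
\]
Because $-\partial_r$ is future null and $l>0$, future-directed timelike vectors satisfy $\dot\rho_v>0$ (pair with $-\partial_r$: $g(\dot\rho,-\partial_r)=-\tfrac{l}{2}\dot\rho_v<0$). Thus it suffices to bound $\dot\rho_v$ above. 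Indeed, $\dot\rho_r=(f\dot\rho_v^2-1-\rho_r^2h(Y,Y))/(l\dot\rho_v)$, and a bound on $|\dot\rho_r|$ would follow from an upper bound on $\dot\rho_v$ together with a positive lower bound on $\dot\rho_v$. The lower bound comes from the same identity: $l\dot\rho_v\dot\rho_r\le f\dot\rho_v^2-1$, and we control $\dot\rho_r$ from above in the next step.

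\textbf{Main step: monotone quantity for $\dot\rho_v$.} The natural quantity is the "energy" $E:=-g(\dot\rho,\partial_v)=f\dot\rho_v-\tfrac{l}{2}\dot\rho_r$. It is not conserved, but its derivative is $\tfrac12(\partial_v g)(\dot\rho,\dot\rho)$ in general. Since $\partial_v f,\partial_v l$ carry no sign assumptions, I would instead work with the $r$-covariant component $g(\dot\rho,\partial_r)=\tfrac{l}{2}\dot\rho_v$, whose evolution is
\[
\frac{d}{ds}\Big(\tfrac{l}{2}\dot\rho_v\Big)=\tfrac12(\partial_r g)(\dot\rho,\dot\rho)=\tfrac12\Big(-\partial_rf\,\dot\rho_v^2+\partial_rl\,\dot\rho_v\dot\rho_r+2\rho_r h(Y,Y)\Big).
\]
Substituting $\dot\rho_v\dot\rho_r$ from the normalisation, using $\partial_r l\ge0$ and $h\ge0$, the $\dot\rho_v^2$ coefficient becomes $\tfrac{1}{2l}(f\partial_rl-l\partial_rf)$. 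The right-hand side is then
\[
\frac{1}{2l}\big(f\partial_rl-l\partial_rf\big)\dot\rho_v^2-\frac{\partial_rl}{2l}\big(1+\rho_r^2h(Y,Y)\big)+\rho_r h(Y,Y),
\]
in which the first term is $\le0$ by \eqref{exttheorembounds}, the second term is $\le0$, and the last term equals $K/\rho_r^3\le K/c_H^3$. Hence $\tfrac{l}{2}\dot\rho_v\le C+\tfrac{K}{c_H^3}s$ on $[0,b)$, and with $l\ge c_l$ this bounds $\dot\rho_v$ above on the finite interval. This step is the crux, and I expect it to be the main obstacle: checking that the sign assumptions exactly kill the bad terms. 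If it fails, I would fall back on a Grönwall-type estimate.

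For the lower bound on $\dot\rho_v$ and the bound on $\dot\rho_r$, I would use $|f|\le C_f$ and $l\le C_l$. From the normalisation, $\tfrac{l}{2}\dot\rho_v\cdot 2\dot\rho_r\le C_f\dot\rho_v^2$, so $\dot\rho_r\le C_f\dot\rho_v/c_l$ is bounded above. Then $1\le f\dot\rho_v^2-l\dot\rho_v\dot\rho_r\le \dot\rho_v(C_f\dot\rho_v+C_l|\dot\rho_r|)$. To control $|\dot\rho_r|$ from below (that is, $\dot\rho_r\to-\infty$), I would use the $r$-equation or note that $\dot\rho_r\to-\infty$ would drive $\rho_r$ down to $r_H$ in finite affine time, unless $\rho_r$ stays bounded below by $c_H$. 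To settle this I would integrate: $\rho_r$ is bounded below and $\dot\rho_r$ is bounded above, so $\int|\dot\rho_r|<\infty$. If needed, I would bound $\dot\rho_r$ below via the geodesic equation for $\dot\rho_v$, combined with the covariant identity above. Boundedness of the velocity in coordinates then yields extendibility, giving the contradiction.
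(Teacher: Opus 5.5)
Your upper bound on $\dot\rho_v$ is correct and is essentially the paper's estimate. The identity $\frac{d}{ds}\bigl(\tfrac{l}{2}\dot\rho_v\bigr)=\tfrac12(\partial_r g)(\dot\rho,\dot\rho)$ is $\tfrac{l}{2}$ times the $v$-component of the geodesic equation. The paper reaches the same inequality $\frac{d}{ds}(l\dot\rho_v)\le 2K/\rho_r^3$ by eliminating $\partial_v l$ through $\frac{d}{ds}\log l$; your derivation is cleaner. The bounds $\dot\rho_r\le C_f\dot\rho_v/c_l$ and $h(Y,Y)\le K/c_H^4$ also match the paper.

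The gap is the positive lower bound on $\dot\rho_v$. Your endgame (bounded velocity, then ODE extension) needs it to bound $\dot\rho_r$ from below, and neither of your suggestions supplies it. The normalisation gives $1\le\dot\rho_v\,(f\dot\rho_v-l\dot\rho_r)$, so a lower bound on $\dot\rho_v$ needs a \emph{lower} bound on $\dot\rho_r$. But a lower bound on $\dot\rho_r$ in turn needs a lower bound on $\dot\rho_v$, so the argument is circular; the upper bound on $\dot\rho_r$ does not help. The heuristic that $\dot\rho_r\to-\infty$ would drive $\rho_r$ down to $r_H$ also fails: a priori $\dot\rho_r$ could blow up integrably as $s\to b$ (e.g.\ like $-(b-s)^{-1/2}$) without $\rho_r$ reaching the horizon. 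Finally, ``bound $\dot\rho_r$ below via the geodesic equation for $\dot\rho_v$'' is not carried out.

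The missing step is a Riccati-type inequality from the $v$-equation,
$\ddot\rho_v=-\frac{\partial_v l+\partial_r f}{l}\dot\rho_v^2+\frac{2K}{l\rho_r^3}$.
\begin{itemize}
\item Your upper bounds on $\dot\rho_v$ and $\dot\rho_r$ confine $(\rho_v,\rho_r)$ to a compact rectangle $[\rho_v(0),V]\times[c_H,R]$.
\item There, smoothness of $f,l$ and $l\ge c_l$ bound the coefficient, and the $K$-term is non-negative, so $\ddot\rho_v\ge -C\dot\rho_v^2$.
\item Hence $\frac{d}{ds}(1/\dot\rho_v)\le C$, giving $\dot\rho_v\ge\bigl(1/\dot\rho_v(0)+Cb\bigr)^{-1}$.
\end{itemize}
No sign condition on $\partial_v l$ or $\partial_r f$ is available, so the compactness is essential. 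This is the paper's argument; it treats $K>0$ and $K=0$ separately, but the single inequality covers both. The normalisation then bounds $|\dot\rho_r|$, $(\rho,\dot\rho)$ stays in a compact subset of $T\mathcal M$, and O'Neill's Lemma~56 of Chapter~1 gives extendibility.

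Alternatively, you can avoid velocity bounds entirely using an observation you already made. From $\int_0^b|\dot\rho_r|\,ds<\infty$, the bounds on $\dot\rho_v$ and $\|Y\|_h$, and completeness of $\mathcal F$, $\rho(s)$ converges to a point of $\mathcal M$ with $r\ge c_H$. A geodesic on $[0,b)$ with $b<\infty$ and an endpoint in the manifold extends as a geodesic, via a convex neighbourhood of the endpoint. This contradicts future-inextendibility. Either route closes the proof, but as written your proposal does not.
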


\begin{proof}
Contrary to the statement of the theorem, assume that $\rho$ is entirely contained in $\mathcal{M}_{\mathrm{ext}}$ and that $\rho$ is not future complete, i.e., $b<\infty$. Write
\[
\rho(s)=(\rho_v(s),\rho_r(s),\rho_y(s)),
\]
where $\rho_y(s)\in \mathcal{F}$.  Assuming affine reparametrisation without loss of generality gives
$$
-1=-f\dot \rho_v^{\,2}+l\dot \rho_v\,\dot \rho_r+\rho_r^2h(Y,Y).
$$
We have that $K:=\rho_r^4\,h(Y,Y)$ is constant along $\rho$ from \Cref{conservedquantityforh}. Rewriting the above equation gives 
$$
-1=-f\dot \rho_v^{\,2}+l\dot \rho_v\,\dot \rho_r+\frac{K}{\rho_r^2}.
$$
Thus
\begin{equation}\label{vdotrdot}
    \dot \rho_v\,\dot \rho_r=\frac{-1+f\dot \rho_v^{2}-K/\rho_r^2}{l}.
\end{equation}
We now prove that each component on the geodesic $\rho$ and its derivative $\dot\rho$ is bounded on $[0,b)$.
\newline

\underline{$\rho_v$ and $\dot \rho_v$ are bounded in  $[0,b)$}
\newline

Let $A,B$ denote indices on $\mathcal{F}$. The only Christoffel symbols with upper index $v$  are
$$
\Gamma^v_{AB}=-\frac{2\rho_r}{l}h_{AB}, \quad\Gamma^v_{vv}=\frac{\partial_r f+\partial_vl}{l}.
$$
Therefore, the $v$-component of the geodesic equation becomes
$$
0=\ddot \rho_v+\frac{\partial_v l+\partial_r f}{l}\dot \rho_v^{2}
-\frac{2\rho_r}{l}h(Y,Y),
$$
which can be rewritten in terms of $K$ as (see \Cref{conservedquantityforh})
\begin{equation}\label{vcomponentofgeodesicequation}
\ddot \rho_v+\frac{\partial_v l+\partial_r f}{l}\dot \rho_v^{2}
-\frac{2K}{l\rho_r^3}=0.
\end{equation}
We use 
$
\frac{d}{ds}\log l(\rho_v,\rho_r)=\dot \rho_v\,\partial_v\log l+\dot \rho_r\,\partial_r\log l.
$ Multiplying by $\dot \rho_v$, we obtain
$$
\dot \rho_v\,\frac{d}{ds}\log l
=
\frac{\partial_v l}{l}\,\dot \rho_v^{\,2}
+
\frac{\partial_r l}{l}\,\dot \rho_v\,\dot \rho_r.
$$
Using \Cref{vdotrdot} into the previous identity yields
$$
\dot \rho_v\,\frac{d}{ds}\log l
=
\frac{\partial_v l}{l}\,\dot \rho_v^{\,2}
+
\frac{\partial_r l}{l^2}
\left(
-1+f\,\dot \rho_v^{\,2}-\frac{K}{\rho_r^2}
\right).
$$
Rewriting for $\frac{\partial_v l}{l}\,\dot \rho_v^{\,2}$, we obtain
$$
\frac{\partial_v l}{l}\,\dot \rho_v^{\,2}= \dot \rho_v\,\frac{d}{ds}\log l+\frac{\partial_r l}{l^2}-\frac{\partial_r l}{l^2}
f\,\dot \rho_v^{\,2}+\frac{\partial_r l}{l^2}\frac{K}{\rho_r^2}.
$$
Substituting this expression for $\frac{\partial_v l}{l}\,\dot \rho_v^{\,2}$ in the $v$-component of the geodesic \Cref{vcomponentofgeodesicequation}, we obtain
$$
\ddot \rho_v
=
\frac{2K}{l\rho_r^3}
-\dot \rho_v\,\frac{d}{ds}\log l
-\frac{1}{l^2}\Bigl(1+\frac{K}{\rho_r^2}\Bigr)\partial_r l
+\frac{1}{l^2}\bigl(f\,\partial_r l-l\partial_r f\,\bigr)\dot \rho_v^{\,2}.
$$
By the assumptions $0\le \partial_r l$ and
$
f\,\partial_r l-l\partial_r f\le 0
$
from \eqref{exttheorembounds}, and from the fact that $K\geq0$, it follows that
\begin{equation}\label{ddotrhovub}
    \ddot \rho_v\le \frac{2K}{l\rho_r^3}-\dot \rho_v\,\frac{d}{ds}\log l.
\end{equation}
Since $l\ge c_l>0$ (again from \eqref{exttheorembounds}), and $\rho_r\ge c_{H}>0$ on $M_{\mathrm{ext}}$, we infer that
\[
\ddot \rho_v\le C-\dot \rho_v\,\frac{d}{ds}\log l
\]
for some constant $C>0$. Multiplying by $l$ and using $l\le C_l$, we obtain
\[
\frac{d}{ds}(l\dot \rho_v)=l \ddot \rho_v +\dot \rho_v\frac{d}{ds}l=l \ddot \rho_v +\dot \rho_vl\frac{d}{ds}\log l\leq lC\leq C_lC= C'
\]
for some constant $C'>0$. Integrating over $[0,s]$ gives
$$
l(s)\dot \rho_v(s)
\le
l(0)\dot \rho_v(0)+C' s
\le
l(0)\dot \rho_v(0)+C' b.
$$
Using again $l\ge c_l>0$ (from \eqref{exttheorembounds}), we conclude that
$$
\dot \rho_v(s)
\le
\frac{l(0)\dot \rho_v(0)+C' b}{c_l},
$$
so $\dot \rho_v$ is uniformly bounded from above on $[0,b)$. Since $\rho$ is future-directed timelike and $-\partial_r$ is future-directed null, we have $g\left(-\partial_r,\dot \rho\right)=-\frac{l}{2}\dot \rho_v(s)<0$. Since $l>0$, we have $\dot \rho_v(s)>0$. Therefore, the function $\rho_v$ is increasing, and hence, $\lim_{s\to b} \rho_v(s)\in (0,\infty)$ exists. Additionally
\begin{equation}\label{boundednessofrhodotv}
    \sup_{s\in[0,b)}\vert\dot \rho_v\vert<\infty.
\end{equation}

\underline{$\rho_r$ and $\dot \rho_r$ are bounded in $[0,b)$}
\newline

From the preceding part of the proof, there exists a constant
$C_v>0$ such that
$
0<\dot{\rho}_v(s)\leq C_v$ for all $s\in[0,b)$.
Recall from \Cref{vdotrdot},
since $K\geq0$, $l>0$, and $\dot{\rho}_v>0$, we obtain
$
\dot{\rho}_r
\leq
(f/l)
\dot{\rho}_v.
$
Using the bounds in \eqref{exttheorembounds}, it follows that
$$
\dot{\rho}_r(s)
\leq
\frac{C_fC_v}{c_l}
=:C_r^{+}
\qquad \text{for all } s\in[0,b).
$$
Consequently, for every $s\in[0,b)$,
$$
\rho_r(s)-\rho_r(0)
=
\int_0^s\dot{\rho}_r(\sigma)\,d\sigma
\leq
\int_0^s C_r^{+}\,d\sigma
=
C_r^{+}s
\leq
C_r^{+}b.
$$
This gives an upper bound on $\rho_r$ in $[0,b)$. Moreover, $\rho_r>r_H$, we have that 
\begin{equation}\label{rhorbound}
    \sup_{s\in[0,b)}\vert\rho_r\vert<\infty,
\end{equation}

We now show that $\dot\rho_v$ is bounded away from zero. On the contrary, assume that $\dot\rho_v$ is not bounded away from zero, then there exists a sequence $[0,b)\ni s_n\to b$ for $n\to\infty$ with
$\lim_{n\to\infty}\dot \rho_v(s_n)=0$ (we do know that $\dot \rho_v(s)>0~\forall~s\in[0,b)$). Hence, after restricting to a sufficiently small final interval $[s_0,b)$, the curve $(\rho_v,\rho_r)$ is contained in a compact subset of the $(v,r)$-coordinate domain (here we use the result that $\rho_r$ is bounded, see \eqref{rhorbound}). In particular, the coefficient
$
\left(\partial_v l+\partial_r f\right)/l
$
in \Cref{vcomponentofgeodesicequation} is bounded in this compact subset, and the coefficient $2/\left(l\rho_r^3\right)$ is bounded below by a strictly positive constant.  Suppose first that $K>0$. Since $\dot{\rho}_v(s)\to0$ by our assumption, \Cref{vcomponentofgeodesicequation} implies, after increasing $s_0$ if necessary, that
$$
\ddot{\rho}_v(s)\geq c>0
\qquad\text{for all }s\in[s_0,b)
$$
for some constant $c>0$. Thus $\ddot{\rho}_v$ is strictly increasing on $[s_0,b)$, and hence $\dot{\rho}_v(s)\not\to0$ (since $\dot\rho_v(s)>0$). It remains to consider $K=0$. In this case, equation \eqref{vcomponentofgeodesicequation} gives
$$
\frac{d}{ds}\left(\frac{1}{\dot{\rho}_v}\right)=
\frac{\partial_v l+\partial_r f}{l}
$$
The right-hand side is bounded on $[s_0,b)$. Since $b<\infty$, integrating both sides gives that $1/\dot{\rho}_v$ remains bounded on $[s_0,b)$. Therefore $\dot{\rho}_v$ is bounded away from zero in this case too.

Once we establish that $\dot\rho_v$ is bounded away from zero, then there exists $c>0$ with $0<c\le \dot\rho_v\le C$ (refer to \Cref{boundednessofrhodotv}).
We obtain from \Cref{vdotrdot} that
\begin{equation}\label{eqndotvlb}
\dot \rho_r
=
\frac{-1 + f\dot \rho_v^{\,2} - \frac{K}{\rho_r^2}}{\dot \rho_v\,l}
\end{equation}
which, together with $\rho_r>r_{H}>0$, and \eqref{exttheorembounds}, gives 
\begin{equation}\label{boundednessofrhodotr}
    \sup_{s\in[0,b)]}|\dot \rho_r|< \infty.
\end{equation}

\underline{$\rho_y$ and $h(Y,Y)$ are bounded in $[0,b)$.}
\newline

Finally,
$$
h(Y,Y)=\frac{K}{r^4}\le \frac{K}{r_{H}^4}.
$$
Thus $\rho_y(s)$ has uniformly bounded speed, so its $h$-length on $[0,b)$ is finite:
$$
L_h(\rho_y|_{[0,b)})\le \frac{\sqrt K}{c_{H}^2}\,b<\infty.
$$

Since all the components of $\rho(s)$ and $\dot \rho$ are bounded in $[0,b)$, we have  $(\rho(s),\dot\rho(s))\vert_{s\in[0,b)}\subset T\mathcal{M}_{\mathrm{int}}$ is contained in a compact set.  But for smooth manifold $T\mathcal{M}_{\mathrm{int}}$, the image of the curve $(\rho,\dot \rho): [0,b)\to T\mathcal{M}_{\mathrm{int}}$, where $b<\infty$, lies in a compact set of $T\mathcal{M}_{\mathrm{int}}$ if and only if it is extendible (refer to Lemma 56, Chap 1 of O'Neill \cite{oneill}). Since we know that  $(\rho,\dot \rho)$ is inextendible, our assumption  that $\rho$ is entirely contained in $\mathcal{M}_{\mathrm{ext}}$ and that $\rho$ is not future complete, is false. 
\end{proof}

\begin{remark}
    The assumptions in \eqref{exttheorembounds} impose the following restrictions on the functions
$m(r)$ and $\psi(r)$ in the exterior region $r>r_H$ as sufficient conditions for the statement of the  \Cref{eitherorprop1} to hold.
\begin{equation*}\label{assumptonforeitheror}
 \begin{split}
&\sup_{r>r_H}\exp{\left(-2\psi(r)\right)\left(1-\frac{2m(r)}{r}\right)}<\infty,\quad \sup_{r>r_H}\vert\psi(r)\vert<\infty,
    \quad \textrm{and}\quad \\
&\psi'(r)\leq \min\left\{ 0, \frac{2}{r} \left(\frac{m(r)-rm'(r)}{r-2m(r)}\right)\right\}. 
 \end{split}
\end{equation*}
\end{remark} 

\Cref{eitherorprop1} shows that, under suitable restrictions on the metric coefficients, any future-directed timelike geodesic starting in the exterior region either is future complete or intersects the horizon. To complete the proof of \Cref{eitheror}, it remains to analyse the behaviour of such a geodesic after it crosses the horizon.

In the Schwarzschild case, O'Neill's result, Proposition $36$ in Chapter 13 of \cite{oneill}, states that every future-directed, future-inextendible timelike geodesic either is future complete or satisfies $r\to 0$. We will prove an analogous statement for a subclass of the warped-product black-hole spacetimes introduced in \Cref{sec2.2}, in \Cref{eitheror}. Before doing so, we first establish the following proposition. 

\begin{proposition}\label{eitherorprop2}
Let $(\mathcal{M},g)$ be a warped-product black-hole spacetime as described by \eqref{sssst} in  \Cref{sec2.2}, and let $\mathcal{F}$ be compact. Let $\rho:[0,b)\to \mathcal{M}_{\mathrm{int}}$, where $\rho(s)=\left(\rho_t(s),\rho_r(s),\rho_y(s)\right)$, be an affinely parametrised, future-directed, future-inextendible timelike geodesic.  Then
$$
\rho \text{ is future incomplete}
\quad\Longleftrightarrow\quad
r_*:=\lim_{s\to b} \rho_r(s)=0.
$$
\end{proposition}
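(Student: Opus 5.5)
The plan is to prove the two implications separately, using the monotonicity of $r$ along causal curves in $\mathcal{M}_{\mathrm{int}}$ (footnote in \Cref{obstruction}) together with the conserved quantities available in the interior. Since $-\partial_r$ is future-directed timelike in $\mathcal{M}_{\mathrm{int}}$, $\rho_r$ is strictly decreasing, so $r_*=\lim_{s\to b}\rho_r(s)\in[0,r_H)$ exists.

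For ``$\Leftarrow$'' (if $r_*=0$ then $b<\infty$), I would bound $\dot\rho_r$ away from zero. Besides $K=\rho_r^4h(Y,Y)$ from \Cref{conservedquantityforh}, the Killing field $\partial_t$ gives the constant $E:=-g(\partial_t,\dot\rho)=e^{-2\psi}(1-2m/r)\dot\rho_t$. The normalisation $g(\dot\rho,\dot\rho)=-1$ then gives
\[
\dot\rho_r^{2}=e^{2\psi}E^2+\Bigl(\frac{2m(\rho_r)}{\rho_r}-1\Bigr)\Bigl(1+\frac{K}{\rho_r^2}\Bigr).
\]
Near $r=0$, with $m\to m_0>0$ and $\psi$ bounded, the second term is at least $2m_0/\rho_r-1$ up to constants, so $|\dot\rho_r|\gtrsim \rho_r^{-1/2}$. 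Hence $ds\lesssim \sqrt{r}\,dr$, and the parameter length needed to reach $r=0$ from any $r_1>0$ is finite. More simply, on all of $(0,r_H)$ one has $|\dot\rho_r|\ge \sqrt{2m/r-1}>0$, and $\int_0^{r_0}dr/\sqrt{2m/r-1}<\infty$ near $0$. Because $\rho_r$ decreases monotonically to $0$, this gives $b<\infty$. The only subtlety is uniformity of the lower bound away from $r_H$: on any $[r_*',\rho_r(0)]$ with $\rho_r(0)<r_H$ the lower bound is a positive constant, so the integral is finite.

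For ``$\Rightarrow$'' I argue by contraposition: suppose $r_*>0$ and $b<\infty$, and derive a contradiction with inextendibility by showing that $(\rho,\dot\rho)$ stays in a compact subset of $T\mathcal{M}_{\mathrm{int}}$ (Lemma 56, Ch.~1 of \cite{oneill}, as in \Cref{eitherorprop1}). The radial coordinate stays in $[r_*,\rho_r(0)]\subset(0,r_H)$, which is compact. By the identity above, $\dot\rho_r$ is bounded there, and $|\dot\rho_t|=|E|e^{2\psi}/|1-2m/r|$ is also bounded, since $|1-2m/r|$ is bounded below on that compact interval. The fibre speed satisfies $h(Y,Y)=K/\rho_r^4\le K/r_*^4$. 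Then $\rho_t$ has finite total variation on $[0,b)$ and so stays in a bounded interval, and $\rho_y$ lies in the compact $\mathcal{F}$. Thus $(\rho,\dot\rho)$ lies in a compact set, so $\rho$ extends beyond $b$, which is the contradiction.

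The main obstacle is the ``$\Rightarrow$'' direction when $r_*$ equals $r_H$. This cannot occur for a future-directed curve starting inside, since $r$ decreases from $\rho_r(0)<r_H$. Hence the compact interval $[r_*,\rho_r(0)]$ avoids the horizon, where the Schwarzschild coordinate $t$ degenerates. I would state this explicitly, because the bound on $\dot\rho_t$ depends on $1-2m/r$ staying away from zero. Compactness of $\mathcal{F}$ is used only for the fibre component.
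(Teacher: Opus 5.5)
Your proposal is correct and follows the paper's proof essentially step for step: monotonicity of $\rho_r$, the first integrals $E$ and $K$ together with the normalisation to bound $\dot\rho_t$, $h(Y,Y)$ and $\dot\rho_r$, and O'Neill's Lemma 56 when $r_*>0$; and the lower bound $\dot\rho_r^2\ge 2m(\rho_r)/\rho_r-1$ integrated in $r$ when $r_*=0$. Your explicit use of $b<\infty$ in the contrapositive to bound $\rho_t$ fills a small detail that the paper's write-up leaves implicit.
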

\begin{proof}
The time orientation in $\mathcal{M}_{\mathrm{int}}$ is chosen such that $-\partial_r$ is future-directed. Since $\rho$ is future-directed and timelike, $ g(\dot\rho,-\partial_r)=-\left(1-\frac{2m(\rho_r)}{\rho_r}\right)^{-1}\dot \rho_r<0$, and hence $\dot\rho_r<0$. Thus $\rho_r$ is monotone decreasing along $\rho$, and we know that $\rho_r$ is bounded from below. Hence, the limit $r_*$ exists. It satisfies $0\le r_*<r_H$. We have three first integrals, corresponding respectively to the stationary Killing vector field $\partial_t$, the  Riemannian metric tensor $h$ of the Riemannian fibre, and the trivial Killing tensor field $g$. These are given by
\begin{equation}\label{conservedquantityinterior}
    \begin{split}
    E &:=-g(\dot\rho,\partial_t)= e^{-2\psi(\rho_r)}\left(1-\frac{2m(\rho_r)}{\rho_r}\right)\dot \rho_t,\\
    K &:=\rho_r^4 h(Y,Y), \quad \mathrm{and}\\
    -\kappa &:=g(\dot \rho,\dot\rho),\quad (\kappa>0),
\end{split}
\end{equation}
respectively. The second one is as described in \Cref{conservedquantityforh}.  We first prove that if $r_*>0$, then $\rho$ is future complete. From the first and second of the equations \eqref{conservedquantityinterior}, we obtain that $\vert\dot \rho_t\vert~\mathrm{and}~h(Y,Y)$ are both bounded for all $s\in[0,b)$.  Since $\mathcal{F}$ is compact, the image of $\rho_y([0,b))$ is contained in a compact subset of $\mathcal{F}$. The last of the equations in \eqref{conservedquantityinterior}  gives
\begin{equation}\label{kappa}
    -\kappa=-\exp\left(-2\psi(\rho_r)\right)\left(1-\frac{2m(\rho_r)}{\rho_r}\right)\dot \rho_t^2+ \left(1-\frac{2m(\rho_r)}{\rho_r}\right)^{-1}\dot \rho_r^2+\rho_r^2h(Y,Y).
\end{equation}
Hence,
$$
\frac{\dot \rho_r^2}{\left(\frac{2m(\rho_r)}{\rho_r}-1\right)}=\kappa-\dot \rho_t E + \frac{K}{\rho_r^2}.
$$
On the RHS of the equation above, we know that $\kappa$, $K$, and $E$ are finite. Additionally, $\vert \dot \rho_t\vert$ is bounded from above $\forall~s\in[0,b)$ (from the previous argument), and $\rho_r$ has a positive lower bound (since $\rho_r$ is monotone decreasing and $r_*$ is assumed to be positive). Therefore, $\vert \dot\rho_r\vert$ is bounded $\forall~s\in[0,b)$. Hence $(\rho(s),\dot\rho(s))\vert_{s\in[0,b)}\subset T\mathcal{M}_{\mathrm{int}}$ is compact.  But for smooth manifold $T\mathcal{M}_{\mathrm{int}}$, the image of the curve $(\rho,\dot \rho): [0,b)\to T\mathcal{M}_{\mathrm{int}}$, where $b<\infty$, lies in a compact set of $T\mathcal{M}_{\mathrm{int}}$ if and only if it is extendible, unless $b=\infty$ (refer to Lemma 56, Chap 1 of O'Neill \cite{oneill}). Since we know that  $(\rho,\dot \rho)$ is inextendible, we have that $b=\infty$, i.e., $\rho$ is future-complete.

Conversely, suppose that $r_*=0$. From the equations in \eqref{conservedquantityinterior} we have,
\begin{equation*}
     \frac{\dot \rho_r^2}{\left(\frac{2m(\rho_r)}{\rho_r}-1\right)} = \kappa
+
\frac{E^2}{\exp\left(-2\psi\left(\rho_r\right)\right)\left(\frac{2m(\rho_r)}{\rho_r}-1\right)}
+
\frac{K}{\rho_r^2}.
\end{equation*}
Since the second and the third term in the RHS of the above equation are non-negative, we have 
\begin{equation*}
\frac{\dot \rho_r^2}{\left(\frac{2m(\rho_r)}{\rho_r}-1\right)} \ge \kappa, \quad \textrm{and hence}\quad 
\dot \rho^2_r \geq \kappa \left(\frac{2m(\rho_r)}{\rho_r}-1\right).
\end{equation*}
We therefore have 
$$
-\dot \rho_r \geq \sqrt{\left(\frac{2m(\rho_r)}{\rho_r}-1\right)\kappa}\geq  \sqrt{\left(\frac{2m_{\mathrm{min}}}{\rho_r}-1\right)\kappa}.
$$
for all $s\in(s_0,b)$ (the parameter $s_0$ is large enough such that $2m_{\textrm{min}}>\rho_r(s)$ for all $s\in(s_0,b)$). On integrating the first and the last term in this chain of inequality, we obtain
$$
-\int^{0}_{\rho_r(s_0)}\frac{\sqrt{\rho_r}}{\sqrt{\left(2m_{\mathrm{min}}-\rho_r\right)\kappa}}d\rho_r\geq \int^{b}_{s_0}ds,
$$
Since the integral on the left-hand side is finite, we have
$
b-s_0<\infty.
$
Thus $b<\infty$, and $\rho$ is future incomplete.
\end{proof}
Combining \Cref{eitherorprop1} and \Cref{eitherorprop2}, we get the statement in  \Cref{eitheror} as follows. 
\begin{proof}[Proof of \Cref{eitheror}]
    The \Cref{eitherorprop1} states that either $\rho$ is future complete, or there exists $s_0 \in [0,b)$ such that $\rho(s_0) \in \mathcal{H}$. A priori, these two alternatives are not mutually exclusive; both may hold simultaneously. In the second case, since $\rho$ is timelike whereas $\mathcal{H}$ is a null hypersurface, $\rho$ cannot be entirely contained in $\mathcal{H}$. Indeed, tangent vectors to a null hypersurface are either null or spacelike. Hence, once $\rho$ meets $\mathcal{H}$, it must enter $\mathcal{M}_{\mathrm{int}}$.

 \Cref{eitherorprop2} further states that, if $\rho$ is contained in $\mathcal{M}_{\mathrm{int}}$, then $\rho$ is future incomplete if and only if $\rho_r \to 0$ as $s \to b$. Therefore, \Cref{eitherorprop1} leads to three possible scenarios.
    \begin{enumerate}
        \item Let $\rho$ be future complete and there exists $s_0\in[0,b)$ such that $\rho(s_0)\in \mathcal{H}$. Since $\rho\not\subset \mathcal{H}$, we have $\rho$ enters $\mathcal{M}_{\textrm{int}}$.  Now consider that part of $\rho$ that is contained in $\mathcal{M}_{\textrm{int}}$. From  \Cref{eitherorprop2}, we can then imply that $\rho_r(s)\not\to 0$ as $s\to b$. 
        \item Let $\rho$ be future incomplete and there exists $s_0\in[0,b)$ such that $\rho(s_0)\in \mathcal{H}$. Just like the previous case, $\rho$ enters $\mathcal{M}_{\textrm{int}}$. From \Cref{eitherorprop2}, we can then imply that $\rho_r(s)\to 0$ as $s\to b$.
        \item Let $\rho$ be future complete and there does not exists $s_0\in[0,b)$ such that $\rho(s_0)\in \mathcal{H}$. Then $\rho\subset \mathcal{M}_{\textrm{ext}}$, and hence $\rho_r(s)\not \to0$ as $s\to b$.
    \end{enumerate}
\end{proof}

\section{Proof of \Cref{maintheorem2} -- Main $C^0$-inextendibility result}\label{sec6}

We use \Cref{maintheorem} and \Cref{eitheror}, together with Sbierski’s argument in \cite{Sbierski_2018}, to prove this theorem. Suppose, for contradiction, that the spacetime described by \eqref{sssst} in \Cref{sec2.2}, satisfying the assumptions of \Cref{maintheorem} and \Cref{eitheror}, admits a $C^0$-extension. Then  \Cref{gls2}, namely Theorem 2 of Galloway--Ling--Sbierski \cite{Galloway_2018}, implies that there exists a future-directed, future-inextendible timelike geodesic $\rho:[0,b)\to\mathcal{M}$, where $b\in(0,\infty]$, with a future endpoint in $\partial^+\iota(\mathcal{M})$. By  \Cref{eitheror}, such a timelike geodesic either remains in the exterior region $\{r>r_H\}$, in which case it is future complete ($b=\infty$), or approaches the central curvature singularity $r=0$ ($b<\infty$). The first possibility is excluded by  \Cref{thmobstruction}, which states that timelike geodesic completeness prevents a $C^0$-extension through the exterior. The second possibility is excluded by  \Cref{maintheorem}. Both alternatives therefore lead to a contradiction, and hence no such $C^0$-extension can exist.

\section{Technical distinctions from the Schwarzschild case}\label{sec7}

We briefly explain the two main origins of technical differences between the present (Sbierski's adapted) proof for a class of general warped product black hole spacetimes and Sbierski’s original proof for the maximal analytic Schwarzschild spacetime in \cite{Sbierski_2018}.

The first origin is the generalisation of the two-dimensional base part of the Schwarzschild metric. In the case of Schwarzschild spacetime, the relevant metric coefficients are explicit functions of the radial coordinate. In the present setting, they are replaced by the more general functions $\psi(r)$ and $m(r)$. This does not change the overall structure of the argument, but it requires the steps in the proof of $C^0$-inextendibility of Schwarzschild spacetime to be reproved with these general coefficients. In this sense, the resulting differences are mainly computational. This issue appears in the proofs of \Cref{lemma1}, \Cref{prop:foc}, and \Cref{obstruction},  \Cref{prerequisite to clKcompact}. 

The second origin is the generalisation of the codimension-two fibre from the round sphere to a Riemannian manifold that is compact, connected, and homogeneous. This change is more substantial. This generalisation manifests in the proof of  \Cref{lemma1},  \Cref{prop:foc}, \Cref{obstruction}, \Cref{prerequisite to clKcompact}, \Cref{lemmakclcompact}, \Cref{lemmaofisometry}, \Cref{Wtls2}, \Cref{conservedquantityforh}, \Cref{eitherorprop1}, and \Cref{eitherorprop2}. 

We now emphasize on this second origin, recall key results, valid under assumptions weaker than spherical symmetry, and which are used to establish \Cref{statementA} assuming $C^0$-extension through the central curvature singularity. For the class of spacetimes as described by \eqref{sssst} with no spherical symmetry or any assumptions on the Riemannian fibre $(\mathcal{F},h)$ a priori, if the fibre has positive injectivity radius, then the spacetime is future one-connected sufficiently close to the curvature singularity. This is precisely what is shown in \Cref{prop:foc}. Next, \Cref{prerequisite to clKcompact} provides the local causal control near the singular end $r=0$: sufficiently far along any future-inextendible timelike curve, small perturbations in the $t$-coordinate and in the fibre direction still remain in the chronological future of an earlier point of the unperturbed curve. This estimate replaces the more explicit coordinate control available in the case of  Schwarzschild spacetime, which exploits the spherical symmetry. The \Cref{prerequisite to clKcompact} is then used in  \Cref{lemmakclcompact}, which, by using \Cref{lemma1}, globalises this local control by showing that the set $K(\tau_1,\tau_2)$ as defined in \eqref{Kdefn}, has compact closure in $\mathcal M_{\mathrm{int}}$. In particular, this set is bounded away from the singular boundary $r=0$. The compactness conclusion is crucial in the subsequent separation argument in Sbierski’s method: the compactness of the closure of $K(\tau_1,\tau_2)$ allows one to choose a controlled neighbourhood of it which acts as the required barrier in the later argument. The \Cref{lemmaofisometry} shows that the compactness, connectedness, and homogeneity of the Riemannian fibre is enough to move nearby fibre points into one another by isometries which are uniformly close to the identity element of the isometry group of $(\mathcal F,h)$. This weaker symmetry (weaker than spherical symmetry) is precisely what is needed in \Cref{Wtls2} (along with the compactness of $\bar K$ as mentioned above), where one perturbs timelike curves by a small fibre isometry to another timelike curve, to prove that the neighbourhood $W$ (as defined in \Cref{Wtls}) timelike-separates two subsets of $\mathcal M_{\textrm{int}}$. 

The timelike geodesic dichotomy property in Schwarzschild spacetime is well known; see Proposition 36 of Chapter 13 in \cite{oneill}. In Theorem A2 of \cite{Sbierski_2021}, Sbierski later proved that, in a general spherically symmetric black hole spacetime satisfying suitable assumptions, every affinely parametrised, future-directed, future-inextendible timelike geodesic whose initial point lies in the black hole exterior is either future complete or crosses the horizon. His argument exploits spherical symmetry, in particular the corresponding conserved quantities, to reduce the analysis to the plane $\theta=\pi/2$. In \Cref{eitherorprop1}, we generalise this result to a subclass of warped-product black hole spacetimes with a general compact and complete Riemannian fibre. In this setting, spherical symmetry need not be available, and hence the conserved quantities associated with rotations of the sphere may no longer exist. Instead, we use the conserved quantity determined by the Riemannian metric $h$ on the fibre $\mathcal F$, established in \Cref{conservedquantityforh}. In \Cref{eitherorprop2}, we further show that, for a future-directed, future-inextendible timelike geodesic contained in the black hole interior, future incompleteness is equivalent to the radial coordinate tending to zero. Combining \Cref{eitherorprop1} and \Cref{eitherorprop2}, we obtain the timelike geodesic dichotomy in \Cref{eitheror}. Moreover, in \Cref{app:multiplebhh}, we show that this dichotomy continues to hold even in the presence of multiple regular black hole horizons, and hence \Cref{maintheorem2} holds in such cases.

\section{Examples}
\label{sec:examples-topology}

For the class of warped-product black hole spacetimes described in \eqref{sssst}, the fibre $\mathcal F$ determines the topology of the horizon cross-section. Indeed, the horizon is located at the regular positive root $r=r_H$ of $r-2m(r)$, and each cross-section of the corresponding Killing horizon is diffeomorphic to $\mathcal F$. The topological censorship theorems (see, for e.g., \cite{Galloway_1995, Galloway_1999, Birmingham_1999, Cai_2001, Galloway_2006, Galloway_2011}) put constraints on the allowed topology of black hole horizons. In four spacetime dimensions, Hawking's topology censorship theorem \cite{Hawking_1972}, recalled as Theorem~1.2.1 in \cite{Galloway_2011}, states that cross-sections of the black hole horizon in asymptotically flat stationary black-hole spacetimes satisfying the dominant energy condition are topologically $S^2$. Later, Galloway and Schoen obtained a natural generalisation of Hawking's topology censorship theorem to general $d+1$ dimensions, with $d\geq 3$ (see Theorem 2.1 in \cite{Galloway_2006}, and also Theorem~1.4.1 in \cite{Galloway_2011}). In the stationary setting, they showed that for spacetimes satisfying the dominant energy condition, the cross-sections of the black hole horizons admit a metric of positive scalar curvature, apart from a Ricci flat exceptional case. This exceptional case was subsequently ruled out by Galloway later (see Theorem 1.1 and Corollary 1.3 in \cite{Galloway_2008}, and also Theorem~1.6.1 in \cite{Galloway_2011}). Consequently, if the dominant energy condition is satisfied by the spacetime, then the horizon cannot be toroidal. In the following subsections, we present two examples of future $C^0$-inextendible black hole spacetimes within our class that are compatible with Galloway's topological censorship theorem, as stated in Corollary 1.3 of \cite{Galloway_2011}. The first one is shown to also satisfy the dominant energy condition. In both cases, the scalar curvature of the horizon is strictly positive. Hence, they are not ruled out by Corollary 1.3 of \cite{Galloway_2011}. Finally, we discuss black hole spacetimes in anti de-Sitter space as described by Birmingham in \cite{Birmingham_1999}.

\subsection{A spherically symmetric non-vacuum black hole spacetime}

Here we discuss an example of a future $C^0$-inextendible spherically symmetric black hole spacetime, which satisfies the dominant energy condition, and which is permitted by the topological censorship theorem in \cite{Galloway_2008}. Let $(\mathcal M, g)$ be of the form \eqref{sssst}, and such that
$
\mathcal F=S^2,$ and $h=g_{S^2}.$
Let
$$
m(r):=M+\mu\frac{r}{1+r},
\qquad
\psi(r):=0,
$$
where
$
M>0$, and $0<\mu<\frac{1}{2}.$
First note that
$
m'(r)=\frac{\mu}{(1+r)^2}$, and
$m''(r)=-\frac{2\mu}{(1+r)^3}.
$
Moreover,
$
\lim_{r\to0}m(r)=M>0.
$
Define the horizon equation as
$
\mathcal  H (r):=r-2m(r).
$
Then
$
\mathcal H(0)=-2M<0$, $\lim_{r\to\infty}\mathcal H(r)=+\infty,
$
and
$$
\mathcal H'(r)=1-2m'(r)
=
1-\frac{2\mu}{(1+r)^2}
>
1-2\mu
>0.
$$
Hence $\mathcal H$ has a unique positive root, denoted $r_H$, and this root is regular.  Such $m(r)$ and $\psi(r)$ satisfy all four conditions in \eqref{conditionfinaltheorem}. Additionally, the fibre $(\mathcal F,h)=(S^2,g_{S^2})$ is compact, connected, and homogeneous. Hence, from \Cref{maintheorem2}, the spacetime is future $C^0$-inextendible.  Moreover, we have 
$$
G^t_t=-\frac{2m'(r)}{r^2},
\qquad
G^r_r=-\frac{2m'(r)}{r^2},
\qquad
G^{\theta}_{\theta}=-\frac{m''(r)}{r},
$$
where $G
=
\operatorname{Ric}(g)
-
\frac12\operatorname{Scal}_g\,g.$ is the Einstein tensor. Since $G= T$ is the Einstein's equation, and $T^{\mu}_{\nu}=\textrm{diag}(-\rho,p_r,p_{\theta}, p_{\phi})$, where $\mu,\nu\in\{t,r,\theta,\phi\}$, we have 
$$
\rho=\frac{2m'(r)}{r^2}=\frac{2\mu}{r^2(1+r)^2},
\qquad
p_r=-\frac{2m'(r)}{r^2}=-\frac{2\mu}{ r^2(1+r)^2},
\qquad \textrm{and}\qquad
p_{\theta}=-\frac{m''(r)}{r}=\frac{2\mu}{ r(1+r)^3}.
$$
Clearly, $\rho\geq\vert p_r\vert$, and $\rho\geq\vert p_{\theta}\vert$.
Hence, $(\mathcal M,g)$ satisfies the dominant energy condition. Since $(\mathcal F,h)$ has strictly positive scalar curvature, such black hole spacetimes are permitted by the Corollary 1.3 of \cite{Galloway_2008}. In summary, this gives a spherically symmetric, non-vacuum example of a black hole that is future $C^0$-inextendible and the topological censorship theorem does not rule them out. 

\subsection{A non-spherically symmetric black hole with fibre $S^p\times S^q$}

Let $p,q\geq 2$ and set
$
d-1:=p+q=\dim \mathcal F .
$
Consider the fibre $(\mathcal F, h)$ such that
$$
\mathcal F:=S^p\times S^q, \quad \textrm{and} \quad
h:=\left(\frac{p-1}{d-2}\right)\,g_{S^p}
\oplus
\left(\frac{q-1}{d-2}\right)g_{S^q},
$$
where $g_{S^p}$ and $g_{S^q}$ denote the unit round metrics. Let
$
m(r)\equiv M>0$, and $\psi(r)\equiv 0.
$
The horizon is located at
$
r_H=2M.
$
For such a choice of $m(r)$ and $\psi(r)$, all the conditions in \eqref{conditionfinaltheorem} are satisfied. Additionally,  $(\mathcal F,h)$ is compact, connected, and homogeneous. Hence, from \Cref{maintheorem2}, the spacetime is future $C^0$-inextendible.

Moreover, $\left(\mathcal F,h\right)$ is an Einstein manifold. Indeed, for the unit round metrics,
\begin{equation}\label{ex2einsteinmanifold}
    \operatorname{Ric}_{g_{S^p}}=(p-1)g_{S^p},
\qquad
\operatorname{Ric}_{g_{S^q}}=(q-1)g_{S^q}.
\end{equation}
Under a constant rescaling $c \times g$, where $c\in \mathbb{R}/\{0\}$, the Ricci tensor as a $(0,2)$-tensor is unchanged. Hence
$$
\operatorname{Ric}_{\frac{p-1}{d-2}g_{S^p}}
=(p-1)g_{S^p}
=(d-2)\left(\frac{p-1}{d-2}g_{S^p}\right), \quad \textrm{and}\quad
\operatorname{Ric}_{\frac{q-1}{d-2}g_{S^q}}
=(d-2)\left(\frac{q-1}{d-2}g_{S^q}\right).
$$
Therefore
$
\operatorname{Ric}_h=(d-2)h,$ and hence,
$\operatorname{Scal}_h=(d-1)(d-2)>0.
$
Thus, the fibre has strictly positive scalar curvature. If $(\mathcal M,g)$ doesn't satisfy the dominant energy condition, then Corollary 1.3 of \cite{Galloway_2008} doesn't apply to it. If it satisfies the dominant energy condition, then since $(\mathcal F,h)$ has strictly positive scalar curvature, Corollary 1.3 of \cite{Galloway_2008} permits such black hole spacetimes. In summary, this example gives a non-spherical, non-vacuum example of a black hole spacetime that is future $C^0$-inextendible, and the topological censorship theorem does not rule them out.

On similar lines, one can consider future $C^0$-inextendible black hole spacetimes as in \eqref{sssst} such that $m(r)=M>0$, and $\psi(r)\equiv0$, along with a closed, connected, homogeneous, and orientable fibre $(\mathcal{F},h)$ that admits positive scalar curvature.   

\subsection{Birmingham black holes in anti-de Sitter space}
The spacetimes described by Birmingham in \cite{Birmingham_1999} are solutions of Einstein's equations with a negative cosmological constant. In the class of spacetimes described by \eqref{sssst} in \Cref{sec2.2}, the $(d+1)$-dimensional Birmingham spacetimes correspond to $\psi(r)\equiv 0$, and 
$$
m(r)=\frac{\alpha M}{2r^{d-3}}+\frac{\Lambda r^3}{d(d-1)}, \quad \textrm{where}\quad \alpha\in \mathbb{R}^+, \quad \Lambda\in \mathbb{R^-}. 
$$
Moreover, $(\mathcal F,h)$ is closed and orientable.  If the fibre is an Einstein space of the form
$$
\textrm{Ric}_h=(d-2)h,
$$
then, the spacetime is an Einstein space with a negative cosmological constant, namely
$$
\textrm{Ric}_g=\frac{2\Lambda }{(d-1)}g.
$$
Such $m(r)$ and $\psi(r)$ satisfy conditions in \eqref{conditionthm2}, but fail to satisfy the second inequality in \eqref{eitherorconstraints}. If $(\mathcal F,h)$ is closed, connected, orientable, and homogeneous, then \Cref{maintheorem} is applicable to the corresponding Birmingham spacetime; however, \Cref{eitheror}, and hence, \Cref{maintheorem2} does not apply. Hence, such a class of black hole spacetimes are future $C^0$-inextendible through $r\to0$.   
 
\section*{Appendix}
\addcontentsline{toc}{section}{Appendix} 

\renewcommand{\thesection}{A\arabic{section}}
\setcounter{section}{0}

Here, we first discuss how the proof of \Cref{maintheorem2} still holds if there exists more than one positive real regular root of the function $r-2m(r)$, in \Cref{app:multiplebhh}. Then we derive the formula for the Kretschmann scalar of a general warped product spacetime (with codimension-two fibre) in \Cref{app:Kret}. Finally, in \Cref{app:kret2} we show that for the subclass of black hole spacetimes considered here (see  \Cref{sec2.2}), $m_0>0$ is a sufficient condition for the Kretschmann scalar to diverge as $r\to 0$.

\section{~Multiple regular black hole horizons}\label{app:multiplebhh}
\setcounter{equation}{0}
\renewcommand{\theequation}{A.\arabic{equation}}

In the main body of the paper, we assumed that the function $r-2m(r)$ has a unique positive regular root. This ensures that there exists only one regular black hole horizon in the spacetime $(\mathcal{M},g)$ given by \eqref{sssst}. We now explain how  \Cref{eitheror} (and hence the final \Cref{maintheorem2}) still hold when there is more than one positive regular root, or in other words, more than one regular black hole horizon. In such a case,  \Cref{eitherorprop1} applies to the outermost horizon, while  \Cref{eitherorprop2} applies only after the innermost horizon has been crossed by the geodesic. In order to obtain \Cref{eitheror} in such a multiple-horizon case, one must rule out the possibility that a future-directed timelike geodesic becomes future-inextendible at a finite affine parameter in one of the intermediate regions between two consecutive regular horizons. 

It should be noted that a future-inextendible timelike geodesic in the intermediate region that is also future complete does not cause any hindrance for the desired dichotomy in \Cref{eitheror}, since such a geodesic falls into the complete alternative. Thus, in the intermediate region between two consecutive regular horizons, it is enough to rule out the possibility of a future-directed timelike geodesic which remains in that region and is future-inextendible and future incomplete. The following lemma shows that this possibility cannot occur, and hence, \Cref{eitheror} continues to hold.

\begin{lemma}{}[No future-incomplete timelike geodesics in the intermediate region between two consecutive horizons]
Consider the spacetime $(\mathcal{M},g)$ described by \eqref{sssst} in  \Cref{sec2.2} but allowing multiple (finitely many) horizons. Let $r_1<r_2$, be two consecutive positive regular roots of the function $r-2m(r)$. Moreover, assume that $m(r)$ and $\psi(r)$ are smooth in the neighbourhood of each of $r_i$. Let $\mathcal A:=\mathbb R\times (r_1,r_2)\times \mathcal F$ denote the region between the corresponding two horizons. Assume that the fibre $(\mathcal F,h)$ is compact, and that the spacetime smoothly extends across the two horizons $\mathcal{H}_1=\{r=r_1\}$ and $\mathcal{H}_2=\{r=r_2\}$. Let $\rho:[0,b)\to \mathcal M$ be an affinely parametrised future-directed timelike geodesic whose image is contained in $\mathcal A$. If $\rho$ is future-inextendible, then $\rho$ is future complete, i.e. $b=\infty$. 
\end{lemma}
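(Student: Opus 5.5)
Argue by contradiction: suppose $b<\infty$ and $\operatorname{Im}(\rho)\subset\mathcal A$. Show that $(\rho,\dot\rho)$ stays in a compact subset of $T\mathcal M$, where $\mathcal M$ denotes the smooth spacetime extended across $\mathcal H_1$ and $\mathcal H_2$. Then Lemma 56, Chap.~1 of O'Neill gives extendibility of $\rho$, contradicting inextendibility, exactly as in \Cref{eitherorprop1} and \Cref{eitherorprop2}. The work is to obtain uniform bounds on all components of $\rho$ and $\dot\rho$ on $[0,b)$. Since $\rho_r\in(r_1,r_2)$ and $r_1>0$, the radial component is bounded, and the fibre is handled by compactness of $\mathcal F$ and the conserved quantity $K=\rho_r^4h(Y,Y)$ of \Cref{conservedquantityforh}, which gives $h(Y,Y)\le K/r_1^4$.

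The subtlety is that $\mathcal A$ may be either a trapped region, where $r$ is timelike, or a static region, where $r$ is spacelike, and the Schwarzschild coordinate $t$ degenerates at both horizons. I would therefore work in ingoing Eddington--Finkelstein coordinates \eqref{sssstefcoordinate}, which are regular across $\mathcal H_1$ and $\mathcal H_2$ by the smoothness assumption. Here $f=e^{-2\psi}(1-2m/r)$ and $l=2e^{-\psi}$ are smooth on a neighbourhood of $[r_1,r_2]$, hence bounded, and $l$ is bounded below by a positive constant. Compactness of $\mathcal F$ and of $[r_1,r_2]$ leaves only $\rho_v$, $\dot\rho_v$ and $\dot\rho_r$ to control.

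\textbf{Step 1.} Use the first integrals $E=-g(\dot\rho,\partial_v)=f\dot\rho_v-\tfrac{l}{2}\dot\rho_r$, the constant $K$, and the normalisation $-\kappa=g(\dot\rho,\dot\rho)$. Eliminating $\dot\rho_v$ gives the radial energy identity $e^{-2\psi}\dot\rho_r^2=\tfrac{l^2}{4}\dot\rho_r^2=E^2-f\,(\kappa+K/\rho_r^2)$. This identity holds smoothly across the horizons. Its right-hand side is bounded on $[r_1,r_2]$, so $|\dot\rho_r|$ is bounded.

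\textbf{Step 2.} Bound $\dot\rho_v$. Future-directedness gives $\dot\rho_v>0$ (as in \Cref{eitherorprop1}). Wherever $f$ is bounded away from zero, the relation $f\dot\rho_v=E+\tfrac{l}{2}\dot\rho_r$ together with Step 1 bounds $\dot\rho_v$. Near a horizon $f\to0$, so this fails. Near $\mathcal H_1$ or $\mathcal H_2$ I would instead use the normalisation $-\kappa=-f\dot\rho_v^2+l\dot\rho_v\dot\rho_r+K/\rho_r^2$, read as a quadratic in $\dot\rho_v$. This is the \textbf{main obstacle}. Since ingoing coordinates are regular only at future-type horizons, an outgoing geodesic approaching a past-type horizon could have $\dot\rho_v$ blowing up. In that case I would switch to outgoing Eddington--Finkelstein coordinates $u=t-\int e^{\psi}(1-2m/r)^{-1}dr$ near that horizon and repeat the argument with the roles of $u$ and $v$ exchanged. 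With a suitable choice of chart, $|\dot\rho_u|$ or $|\dot\rho_v|$ is then bounded as $f\to0$, because $E$ is finite and $\dot\rho_r$ is bounded.

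\textbf{Step 3.} Integrate the bounded $\dot\rho_v$ (or $\dot\rho_u$) over the finite interval $[0,b)$ to bound $\rho_v$. Combining this with Steps 1 and 2 and the fibre bounds puts $(\rho,\dot\rho)$ in a compact subset of the tangent bundle of the extended manifold. O'Neill's lemma then shows that $\rho$ extends beyond $b$, contradicting its future-inextendibility, so $b=\infty$.
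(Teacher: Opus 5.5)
Your overall plan matches the paper's: assume $b<\infty$, bound $(\rho,\dot\rho)$ in horizon-regular charts, and conclude with O'Neill's Lemma 56. Your Step 1 identity $e^{-2\psi}\dot\rho_r^2=E^2-f(\kappa+K/\rho_r^2)$ is correct. The gap is in Step 2, where you claim that a suitable choice of ingoing or outgoing Eddington--Finkelstein chart keeps $\dot\rho_v$ or $\dot\rho_u$ bounded. This fails when $E=0$, and that case is not vacuous. If $\mathcal A$ is a trapped region ($1-2m/r<0$ on $(r_1,r_2)$), then $\partial_t$ is spacelike there and $E=-g(\dot\rho,\partial_t)$ can vanish. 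For such a geodesic $\rho_t\equiv t_0$ is constant and $\dot\rho_r^2=-(1-2m/\rho_r)(\kappa+K/\rho_r^2)$. Hence $|\dot\rho_r|\sim|\rho_r-r_1|^{1/2}$, and $\rho_r\to r_1$ at finite affine parameter. But $\rho_v=t_0+r^*(\rho_r)$ and $\rho_u=t_0-r^*(\rho_r)$ both diverge, because $r^*$ diverges logarithmically at $r_1$. Likewise $\dot\rho_v=-\dot\rho_u=e^{\psi}\dot\rho_r/(1-2m/\rho_r)$ blows up like $|\rho_r-r_1|^{-1/2}$. Geometrically, the geodesic runs into the bifurcation sphere of $\mathcal H_1$, which lies in neither Eddington--Finkelstein chart. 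So your Step~3 compactness argument cannot be carried out in either chart.

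What is missing is a chart that also covers the bifurcation sphere. The paper uses Kruskal-type coordinates $U=-e^{-\kappa_1 u}$, $V=e^{\kappa_1 v}$, with $UV=\chi_1(r)(r-r_1)$. For $E=0$, constancy of $t$ gives $U,V=O(|\rho_r-r_1|^{1/2})$. Moreover $\dot U/U=\dot V/V=\kappa_1 e^{\psi}\dot\rho_r/(1-2m/\rho_r)=O(|\rho_r-r_1|^{-1/2})$, so $\dot U$ and $\dot V$ stay bounded. The paper treats $E\neq0$ in the same $(U,V)$ chart: one of $U,V$ stays bounded away from zero and the other is $O(\rho_r-r_i)$, which is equivalent to your choice of Eddington--Finkelstein chart. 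Even for $E\neq0$, your justification ``because $E$ is finite and $\dot\rho_r$ is bounded'' is not enough. What actually bounds the correct component (in your notation, $\dot\rho_v=(E+e^{-\psi}\dot\rho_r)/f$ and $\dot\rho_u=(E-e^{-\psi}\dot\rho_r)/f$) is that the matching numerator vanishes to first order in $f$. Here the chart is chosen according to the sign of $E$ relative to $\dot\rho_r$. The vanishing follows from $E^2-e^{-2\psi}\dot\rho_r^2=f(\kappa+K/\rho_r^2)$, a consequence of your Step 1 identity.
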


\begin{proof}
We prove the contrapositive of the above statement. Let $b<\infty$. Suppose $\rho$ stays at a positive $r$-distance away from both horizons. Then there exist a $\delta>0$ and $s_1\in[0,b)$ such that, for all $s\in[s_1,b)$,
$
r_1+\delta
<
\rho_r(s)
<
r_2-\delta .
$
Set
$
I_\delta:=[r_1+\delta,r_2-\delta].
$
Since $r_1$ and $r_2$ are consecutive roots, the function $r-2m(r)$ has no zero on $I_\delta$. Hence, both $1-2m(r)/r$ and its inverse are bounded on $I_\delta$. Moreover, since $r$ is bounded away from zero on $I_\delta$ and also bounded from above, the functions $m$ and $\psi$ are smooth and bounded on $I_{\delta}$. We now show that $\rho$ is contained in a compact set. From the first and second of the equations \eqref{conservedquantityinterior}, we obtain that $\vert\dot \rho_t\vert~\mathrm{and}~h(Y,Y)$ are both bounded in the domain $I_{\delta}$.  Since $\mathcal{F}$ is compact, the image of $\rho_y([0,b))$ is contained in a compact subset of $\mathcal{F}$. Additionally, from \eqref{kappa}, we have that $\vert\dot \rho_r\vert$ is bounded on $I_{\delta}$ too.  Since by assumption $b<\infty$, the bound on $\dot\rho_t$ gives that $\vert\rho_t\vert$ is bounded in the domain $I_{\delta}$. We now use the following result that we also used in \Cref{eitherorprop2}: For smooth manifold $\mathcal{M}$, the image of the future-directed timelike geodesic $(\rho,\dot \rho): [0,b)\to T\mathcal{M}$, where $b<\infty$, lies in a compact set of $T\mathcal M$ if and only if it is extendible (refer to Lemma 56, Chap 1 of O'Neill \cite{oneill}). Hence $\rho$ is extendible.

It remains to treat the case in which $\rho$ does not stay a positive
$r$-distance away from the horizons. We show that in this case $\rho$ is
again future extendible, which gives the desired contradiction. Set $f(r):=1-\frac{2m(r)}{r}.$ Using \Cref{conservedquantityinterior} and \Cref{kappa}, we have
\begin{equation}\label{mh1}
    \dot\rho_r^2
        =
        E^2 \exp\left({2\psi(\rho_r)}\right)
        -
        f(\rho_r)
        \left(
        \kappa+\frac{K}{\rho_r^2}
        \right).
\end{equation}
Since $\rho_r\in (r_1,r_2)$ and $r_1>0$, the right-hand side of \Cref{mh1} is bounded. Hence $\dot\rho_r$ is bounded. By the definition of $\rho$, $\rho_r\in(r_1,r_2)$ is bounded too. We now use coordinates which are regular at the horizon
$\mathcal H_i=\{r=r_i\}$, where $i\in\{1,2\}$. Since $r_i$ is a regular root of $f$, we have
$f(r_i)=0$ and $f'(r_i)\neq 0$. Consider \footnote{Note that $\kappa_i$ defined in \Cref{mbhki} is different from $\kappa$ that appears in the third of \Cref{conservedquantityinterior}.}
\begin{equation}\label{mbhki}
    \kappa_i:=\frac12 e^{-\psi(r_i)} f'(r_i)\neq 0,
\end{equation}
and introduce coordinates
$$
u=t-r^*,
        \qquad
v=t+r^*, \quad\textrm{where} \quad r^*:=\int\frac{\exp\left({\psi(r)}\right)}{f(r)}dr.
$$
Since $f$ has a simple zero at $r=r_i$, for $r$ near $r_i$, we may write 
$$
f(r)=(r-r_i)F_i(r), 
$$
where 
$F_i$ is smooth near $r_i$ and $F_i(r_i)=f'(r_i)\neq 0$. Hence 
$$
\frac{\exp\left({\psi(r)}\right)}{f(r)} = \frac{\exp\left({\psi(r)}\right)}{(r-r_i)F_i(r)}= \frac{1}{r-r_i}\left(\frac{\exp\left({\psi(r_i)}\right)}{f'(r_i)} + \left(r-r_i\right)G_i(r)\right)= \frac{\exp\left({\psi(r_i)}\right)}{f'(r_i)}\frac{1}{r-r_i} + G_i(r). 
$$
for some smooth function $G_i$. Therefore for $r$ sufficiently close to $r_i$, we have
\begin{equation}\label{mbhrstar}
    r_* = \frac{\exp\left({\psi(r_i)}\right)}{f'(r_i)} \log |r-r_i| + R_i(r), 
\end{equation} 
where $R_i$ is smooth near $r_i$. Now define Kruskal-type coordinates by
\begin{equation}\label{UandVmbh}
    U=-\exp\left({-\kappa_i u}\right), \qquad V=\exp\left({\kappa_i v}\right).
\end{equation}
After possibly changing the signs of $U$ and $V$, these coordinates are
smooth across $H_i$. Again, sufficiently close to $r=r_i$, we have 
\begin{equation}\label{mbhuv}
    UV=\chi_i(r)(r-r_i),\quad\textrm{where} \quad \chi_i=-\exp\left(2\kappa_iR_i(r)\right).
\end{equation}
Here, we used \Cref{mbhrstar}. The function $\chi_i$ is smooth and non-vanishing near $r_i$. In these coordinates the metric extends smoothly and non-degenerately across $H_i$. It remains to show that $(\rho,\dot\rho)$ stays in a compact subset of the tangent bundle in these regular coordinates, and use the same argument as in the previous case using Lemma 56, Chap. 1 of O'Neill \cite{oneill}. From the second equation in \Cref{conservedquantityinterior}, and the fact that $r_i>0$, we have $h(Y,Y)$ is bounded. Since $\mathcal F$ is compact, $\rho_y$ is also bounded. Now, we prove that $U,V, \dot U$, and $\dot V$ are bounded in the domain of $\rho$. Write $\bar{A}(\rho_r):=\exp{\left(\psi(\rho_r)\right)}$. From the
definitions of $U$ and $V$ we obtain
\begin{equation}\label{udotbyuvdotbyv}
     \frac{\dot U}{U}
        =
        -\kappa_i
        \frac{E  \bar{A}(\rho_r)^2- \bar{A}(\rho_r)\dot\rho_r}{f(\rho_r)},\quad \textrm{and} \quad \frac{\dot V}{V}
        =
        \kappa_i
        \frac{E  \bar{A}(\rho_r)^2+ \bar{A}(\rho_r)\dot\rho_r}{f(\rho_r)}.
\end{equation}
First, suppose that $E\neq 0$. From \Cref{mh1},
$$
        \dot\rho_r^2
        =
        E^2  \bar{A}(\rho_r)^2
        +
        O(f(\rho_r)).
$$
Thus, after restricting to a sufficiently small final interval
$[s_0,b)$, the sign of $\dot\rho_r$ is constant and
$$
        \dot\rho_r
        =
        \sigma |E|  \bar{A}(\rho_r)
        +
        O(f(\rho_r)),
        \qquad
        \sigma\in\{+1,-1\}.
$$
If $\sigma=\operatorname{sgn}(E)$, then
$
        E  \bar{A}(\rho_r)^2- \bar{A}(\rho_r)\dot\rho_r
        =
        O(f(\rho_r)).
$
Hence $\dot U/U$ is bounded, i.e., there exists constant $C>0$ such that  
$$ 
\left|\frac{\dot U(s)}{U(s)}\right|\leq C \qquad \text{for all } s\in[s_0,b). 
$$ 
Then 
$$
\frac{d}{ds}\log |U(s)|=\frac{\dot U(s)}{U(s)}. 
$$ 
Hence, for any $s\in(s_0,b)$, 
$$
\log |U(s)|-\log |U(s_0)| = \int_{s_0}^{s}\frac{\dot U(\sigma)}{U(\sigma)}\,d\sigma . 
$$ 
Taking absolute values and using $b<\infty$, we obtain 
$$\left|\log |U(s)|-\log |U(s_0)|\right| \leq C(s-s_0) \leq C(b-s_0). 
$$ 
Therefore 
$$
|U(s_0)|\exp\left({-C(b-s_0)}\right) \leq |U(s)| \leq |U(s_0)|\exp\left({C(b-s_0)}\right). 
$$ 
Thus $U$ is bounded above and bounded away from zero on $[s_0,b)$. In particular, 
$$ 
|\dot U(s)| = |U(s)|\left|\frac{\dot U(s)}{U(s)}\right| \leq C |U(s_0)|\exp\left({C(b-s_0)}\right) , 
$$ 
so $\dot U$ is also bounded on $(s_0,b)$. From \Cref{mbhuv}, we have $UV=\chi_i(\rho_r)(\rho_r-r_i),$ and
it follows that $V=O(\rho_r-r_i)$. The formula for $\dot V/V$ then implies
that $\dot V$ is bounded. Thus, both $\dot U$ and $\dot V$ are bounded.

If instead $\sigma=-\operatorname{sgn}(E)$, the same argument with $U$ and
$V$ interchanged shows that $V$ stays bounded and bounded away from zero,
while $U=O(\rho_r-r_i)$. Again, $\dot U$ and $\dot V$ are bounded.

It remains to consider the case $E=0$. From the first of \Cref{conservedquantityinterior}, we have $\dot\rho_t=0$, so $t$ is
constant. \Cref{mh1} becomes
$$
 \dot\rho_r^2
        =
        -f(\rho_r)
        \left(
        \kappa+\frac{K}{\rho_r^2}
        \right).
$$
Since $f$ has a simple zero at $r_i$, this implies 
\begin{equation}\label{mbhhdotrho}
    |\dot\rho_r|
        = O \left(|\rho_r-r_i|^{1/2}\right)
\end{equation}
near $r_i$. On the other hand, because $t$ is constant, we have from \Cref{UandVmbh},
$$
        U=O(|\rho_r-r_i|^{1/2}),
        \qquad
        V=O(|\rho_r-r_i|^{1/2}).
$$
Using \Cref{udotbyuvdotbyv} shows that $\dot U$ and $\dot V$
are bounded.

Therefore, in the smooth horizon-regular coordinates $(U,V,y)$, both
$\rho$ and $\dot\rho$ remain in a compact subset of the tangent bundle, implying that $\rho$ is future extendible (using Lemma 56, Chap. 1 of O'Neill \cite{oneill}). 
\end{proof}

\section{~Kretschmann scalar for general warped product spaces}\label{app:Kret}
Consider a warped product space given by
$$
(\mathcal{M}^{d+1},g)=(\mathcal{B},\gamma)\times_r(\mathcal{F},h),
\qquad \textrm{dim}(\mathcal{F})=d-1,
$$
with warped product metric
$$
g_{AB}\ dx^A \otimes  dx^B
=
\gamma_{ab}(x)\ dx^a \otimes  dx^b
+
r^2 h_{ij}(y)\ dy^i \otimes  dy^j .
$$
We denote base indices by $a,b,c,d$, and fibre indices by
$i,j,k,l$. Thus
\begin{equation}\label{indices1}
  g_{ab}=\gamma_{ab},
\qquad
g_{ij}=r^2h_{ij},
\qquad
g_{ai}=0,\quad \textrm{and}
\end{equation}
\begin{equation}\label{indices2}
g^{ab}=\gamma^{ab},
\qquad
g^{ij}=r^{-2}h^{ij},
\qquad
g^{ai}=0.
\end{equation}

We write
$
|\hat\nabla r|_\gamma^2
=
\gamma^{ab}\hat\nabla_a r \hat\nabla_b r,
$
where \(\hat\nabla\) is the Levi-Civita connection of \(\gamma\). For $X,Y,Z\in\Gamma\left(T\mathcal{B}\right)$, and $U,V,W\in\Gamma\left(T\mathcal{F}\right)$, we have the following identities of the Riemann curvature tensors $R:\Gamma\left(T\mathcal{M}\right)\times \Gamma\left(T\mathcal{M}\right)\times \Gamma\left(T\mathcal{M}\right)\to \Gamma\left(T\mathcal{M}\right)$, $R^{\mathcal{
B}}:\Gamma\left(T\mathcal{B}\right)\times \Gamma\left(T\mathcal{B}\right)\times \Gamma\left(T\mathcal{B}\right)\to \Gamma\left(T\mathcal{B}\right)$, and $R^{\mathcal{F}}:\Gamma\left(T\mathcal{F}\right)\times \Gamma\left(T\mathcal{F}\right)\times \Gamma\left(T\mathcal{F}\right)\to \Gamma\left(T\mathcal{F}\right)$ for a warped product space (see \S 7, Proposition 42 in O'Neill \cite{oneill}).
\begin{align}
R(X,Y)Z &= R^{\mathcal{B}}(X,Y)Z, \label{eq:oneill-base}\\
R(X,Y)U &= 0, \label{eq:oneill-vanish-1}\\
R(U,V)X &= 0, \label{eq:oneill-vanish-2}\\
g \left( R(X,U)Y,V\right)
&=
-\frac{\operatorname{Hess}\left(r\right)(X,Y)}{r}
g\left( U,V\right), \label{eq:oneill-mixed}\\
R(U,V)W
&=
R^{\mathcal{F}}(U,V)W
-
\frac{|\hat\nabla r|_\gamma^2}{r^2}
\left(
g\left( U,W\right) V
-
g\left( V,W\right) U
\right).
\label{eq:oneill-fibre}
\end{align}
Here $\operatorname{Hess}{\left(r\right)}(X,Y)=\gamma\left(\hat\nabla_X~\textrm{grad}_{\gamma}r,Y\right)$ is the Hessian of $r$ with respect to $\gamma$. 
In the index notation, \Cref{eq:oneill-base} gives
\begin{equation}\label{baserabcd}
    R_{abcd}(g)=R_{abcd}(\gamma).
\end{equation}
The vanishing identities \Cref{eq:oneill-vanish-1} and
\Cref{eq:oneill-vanish-2} imply
\begin{equation}
    R_{abci}(g)=0,
\qquad
R_{aijk}(g)=0,
\end{equation}
together with the corresponding components obtained by the usual curvature
symmetries. Consider the identity \Cref{eq:oneill-mixed} that includes mixed terms from the base as well as the fibre. We have
$
  R_{aibj}(g)
=
g\left( R(\partial_a,\partial_i)\partial_b,\partial_j\right).  
$
Since
$g\left( \partial_i,\partial_j\right)
=
g_{ij}
=
r^2 h_{ij},$
we obtain
\begin{equation}\label{mixedraibj}
    R_{aibj}(g)
=
-r\,\left(\hat\nabla_a\hat\nabla_b r\right)\,h_{ij}.
\end{equation}
Finally, the identity \Cref{eq:oneill-fibre} gives
\begin{equation}\label{fibreijkl}
    R_{ijkl}(g) = r^2\left[R_{ijkl}(h) - \vert\hat\nabla r\vert_\gamma^2 \left( h_{ik}h_{jl}-h_{il}h_{jk} \right)\right].
\end{equation}
We now compute the Kretschmann scalar
$
K_g=R_{ABCD}(g)R^{ABCD}(g).
$
Using the decomposition into base, mixed, and fibre curvature components,
one has
\begin{equation}\label{Kretschmannf1}
    K_g
=
R_{abcd}R^{abcd}
+
4R_{aibj}R^{aibj}
+
R_{ijkl}R^{ijkl}.
\end{equation}
The factor of $4$ accounts for the equivalent placements of the two base and two fibre indices in the mixed curvature block. The purely base contribution is obtained using \eqref{baserabcd}
\begin{equation}\label{Kretschmannfb}
    R_{abcd}(g)R^{abcd}(g)
=
R_{abcd}(\gamma)R^{abcd}(\gamma)
=
K_\gamma=R_{\gamma}^2\geq 0,
\end{equation}
where $K_{\gamma}$ is the Kretschmann scalar of the manifold $\left(\mathcal{B},\gamma\right)$. Since $\mathcal{B}$ is two-dimensional, $K_{\gamma}$ is equal to the square of its Ricci scalar $R_{\gamma}$. For the mixed contribution, using \Cref{mixedraibj} and using \Cref{indices1} and \Cref{indices2} to raise the indices, we find
\begin{equation}\label{Kretschmannfm}
\begin{split}
4 R_{aibj}R^{aibj}
&=
4 r^2
\left(\hat\nabla_a\hat\nabla_b r\right)
\left(\hat\nabla^a\hat\nabla^b r\right)
h_{ij}r^{-4}h^{ij} \\
&=
\frac{4\left(d-1\right)}{r^2}
\left(\hat\nabla_a\hat\nabla_b r\right)
\left(\hat\nabla^a\hat\nabla^b r\right),
\end{split}
\end{equation}
because $h_{ij}h^{ij}=d-1.$
It remains to compute the fibre contribution. Define
$$
q:=|\hat\nabla r|_\gamma^2,
\qquad
A_{ijkl}:=h_{ik}h_{jl}-h_{il}h_{jk}.
$$
Then \Cref{fibreijkl} can be rewritten as
$$
R_{ijkl}(g)
=
r^2\left[
R_{ijkl}(h)-qA_{ijkl}
\right].
$$
Using \Cref{indices1} and \Cref{indices2} to raise the indices finally gives
\begin{equation}\label{Kretschmannff}
\begin{split}
    R_{ijkl}(g)R^{ijkl}(g)
&=
\frac{1}{r^4}
\left[
R_{ijkl}(h)-qA_{ijkl}
\right]
\left[
R^{ijkl}(h)-qA^{ijkl}
\right] \\
& =\frac{1}{r^4}
\left[
R_{ijkl}(h)R^{ijkl}(h)
-
2qR_{ijkl}(h)A^{ijkl}
+
q^2A_{ijkl}A^{ijkl}
\right]\\
&=\frac{1}{r^4}
\left[
K_h
-
4qR_h
+
2(d-1)(d-2)q^2
\right]\\
&=
\frac{1}{r^4}\left[K_h-4|\hat\nabla r|_\gamma^2R_h+2(d-1)(d-2)|\hat\nabla r|_\gamma^4\right].
\end{split}
\end{equation}
Here, in the third step, we have used
$
R_{ijkl}(h)R^{ijkl}(h)=K_h,
$
and
$
R_{ijkl}(h)A^{ijkl}=2R_h,
$
where
$
R_h=h^{ik}h^{jl}R_{ijkl}(h)
$
is the Ricci scalar of the fibre. Moreover
$
A_{ijkl}A^{ijkl}=2(d-1)(d-2).
$
In the final step, we restore \(q=|\hat\nabla r|_\gamma^2\). Combining the base, mixed, and fibre components from \Cref{Kretschmannfb}, \Cref{Kretschmannfm}, and \Cref{Kretschmannff} the Kretschmann scalar of
$(\mathcal{M}^{d+1},g)$ is obtained from \Cref{Kretschmannf1} as
\begin{equation}\label{Kretschmannfinalformula}
    K_g
=
\underbrace{K_\gamma\rule[-1.2em]{0pt}{0pt}}_{\textrm{base component}}
+
\underbrace{\frac{4(d-1)}{r^2}
\left(\hat\nabla_a\hat\nabla_b r\right)
\left(\hat\nabla^a\hat\nabla^b r\right)\rule[-1.2em]{0pt}{0pt}}_{\textrm{mixed component}}
+
\underbrace{\frac{1}{r^4}
\left[
K_h
-
4|\hat\nabla r|_\gamma^2R_h
+
2(d-1)(d-2)|\hat\nabla r|_\gamma^4
\right]\rule[-1.2em]{0pt}{0pt}}_{\textrm{fibre component}}.
\end{equation}

\section{~Sufficient condition for the divergence of  Kretschmann scalar as $r\to 0$}\label{app:kret2}

\begin{proposition}\label{appendixproposition}
Let $\left(\mathcal{M},g\right)$ be as described  by \eqref{sssst} in \Cref{sec2.2}. Suppose that
$
\lim_{r\to 0}m(r)=m_0>0.
$
Moreover, let $(\mathcal{F},h)$ be compact. Then  
$$
\lim_{r\to0}\inf_{y\in\mathcal F} K_g(r,y)=\infty,
$$ 
where $K_g$ is the Kretschmann scalar.
\end{proposition}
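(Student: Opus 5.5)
We will use the decomposition \eqref{Kretschmannfinalformula} from \Cref{app:Kret}, applied with base $\gamma=-A\,dt^2+B\,dr^2$, where $A=e^{-2\psi}(1-2m/r)$ and $B=(1-2m/r)^{-1}$. All three blocks are sums of squares up to the cross term $-4|\hat\nabla r|^2_\gamma R_h$. So the plan is to throw away the manifestly nonnegative pieces and keep one piece that blows up uniformly in $y$.

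First, we compute $q:=|\hat\nabla r|^2_\gamma=\gamma^{rr}=1-2m(r)/r$. Since $m\to m_0>0$, this gives $q=-2m_0/r+o(1/r)$ as $r\to0$. Hence $q^2\sim 4m_0^2/r^2$, and the term $2(d-1)(d-2)q^2/r^4\sim 8(d-1)(d-2)m_0^2/r^6$.

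Next, we handle the fibre block. Since $\mathcal F$ is compact and $h$ is smooth, the quantities $K_h\ge0$ and $|R_h|$ are bounded on $\mathcal F$, say $|R_h|\le C_h$. Then, uniformly in $y$,
\[
\frac{1}{r^4}\big[K_h-4qR_h+2(d-1)(d-2)q^2\big]\ \ge\ \frac{1}{r^4}\big[2(d-1)(d-2)q^2-4C_h|q|\big].
\]
Because $q^2$ dominates $|q|$ as $r\to0$ (as $|q|\to\infty$), the right-hand side is at least $c\,m_0^2/r^6$ for small $r$, with $c>0$ since $d\ge3$.

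Then we handle the remaining blocks. The base term is $K_\gamma=R_\gamma^2\ge0$ in two dimensions. The mixed term equals $\frac{4(d-1)}{r^2}\,\hat\nabla_a\hat\nabla_b r\,\hat\nabla^a\hat\nabla^b r$. This is the main point needing care, because $\gamma$ is Lorentzian and a full contraction need not be nonnegative. Since $r$ is a coordinate and the metric is $t$-independent and diagonal, the Hessian of $r$ is diagonal in $(t,r)$: $\mathrm{Hess}(r)_{tt}=-\Gamma^r_{tt}$, $\mathrm{Hess}(r)_{rr}=-\Gamma^r_{rr}$, and $\mathrm{Hess}(r)_{tr}=-\Gamma^r_{tr}=0$. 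The contraction is therefore $(\gamma^{tt})^2H_{tt}^2+(\gamma^{rr})^2H_{rr}^2\ge0$, because each diagonal entry appears squared and the signs of $\gamma^{tt},\gamma^{rr}$ disappear. Consequently the mixed term is nonnegative and can be dropped.

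Combining these, $K_g(r,y)\ge c\,m_0^2 r^{-6}$ for all $y\in\mathcal F$ and all sufficiently small $r$. Taking the infimum over $y$ and letting $r\to0$ gives divergence. No assumption on $\psi$ is needed, because $\psi$ enters only through $\gamma_{tt}$, which affects the nonnegative dropped terms and not $q$. The only delicate step is the sign of the mixed term, which the diagonal Hessian observation settles; if it failed, we would fall back to showing that the mixed term is $O(r^{-6})$ with a smaller constant. That would require bounds on $\psi'$, which are not assumed, so the diagonal argument is the one to use.
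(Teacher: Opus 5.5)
Your proposal is correct and follows the paper's proof. You isolate the fibre block of \eqref{Kretschmannfinalformula} and use $|\hat\nabla r|_\gamma^2=1-2m(r)/r$ to obtain blow-up, then discard the base and mixed blocks via $K_\gamma=R_\gamma^2\ge 0$ and the diagonality of $\operatorname{Hess}(r)$ in $(t,r)$; your explicit use of $K_h\ge 0$ and a uniform bound $|R_h|\le C_h$ also makes the uniformity in $y$, needed for the infimum, a bit more transparent than in the paper.
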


\begin{proof}
We show that the fibre component in the formula for the Kretschmann scalar (see \Cref{Kretschmannfinalformula}) of $(\mathcal{M},g)$ diverges to infinity as $r\to 0$. Then we argue that the remaining two components are also nonnegative. This will then imply that the Kretschmann scalar $K_g$ diverges to infinity as $r\to 0$.  We have
$$
|\hat\nabla r|_\gamma^2
=
\gamma^{-1}\left(\hat\nabla r,\hat\nabla r\right) 
=
\gamma^{rr}=1-\frac{2m(r)}{r}.
$$
Substituting for $|\hat\nabla r|_\gamma^2$ in  the fibre component of \Cref{Kretschmannfinalformula}, and taking the limit as $r\to 0$, we obtain
\begin{equation}
   \lim_{r\to 0 }\frac{1}{r^4}
\left[
K_h
-
4\left(1-\frac{2m(r)}{r}\right)R_h
+
2(d-1)(d-2)\left(1-\frac{2m(r)}{r}\right)^2
\right]=\infty.
\end{equation}
Here, we assumed $\lim_{r\to 0}m(r)=m_0>0$, along with using the fact that $K_h$ and $R_h$ are bounded and does not depend on $r$ ($\mathcal{F}$ is compact).

From \Cref{Kretschmannfb} it is clear that the base component in \Cref{Kretschmannfinalformula} is non-negative. Regarding the mixed component, consider the Hessian
$$
\hat\nabla_a\hat\nabla_b r=\partial_a\partial_b r-\Gamma^c_{ab}\partial_c r=-\Gamma^c_{ab}\partial_c r=-\Gamma^c_{ab}\delta_c^r=-\Gamma^1_{ab}, \quad a,b\in\{0,1\}.
$$
One can see that $\Gamma^1_{01}=\Gamma^1_{10}=0$. Consequently, the Hessian has no
off-diagonal $tr$-components, and hence 
$$
\hat\nabla^0\hat\nabla^0r=\left(\gamma^{00}\right)^2 \hat\nabla_0\hat\nabla_0r,\quad \hat\nabla^1\hat\nabla^1r=\left(\gamma^{11}\right)^2 \hat\nabla_1\hat\nabla_1r, \quad \hat\nabla^0\hat\nabla^1r=\hat\nabla^1\hat\nabla^0r=0.
$$
This implies that $\left(\hat\nabla_a\hat\nabla_b r\right)
\left(\hat\nabla^a\hat\nabla^b r\right)\geq 0$. Thus, the second term in the RHS of \Cref{Kretschmannfinalformula} is nonnegative.
\end{proof}

\bibliography{main}

\end{document}